\newcommand\version{April 24, 2023}
\newtheorem{theorem}{Theorem}
\newtheorem{proposition}[theorem]{Proposition}
\newtheorem{lemma}[theorem]{Lemma}
\theoremstyle{definition}
\theoremstyle{remark}
\newtheorem{remark}[theorem]{Remark}
\newtheorem*{example*}{Example}
\newtheorem*{remark*}{Remark}
\newcommand{\1}{\mathbbm{1}}
\newcommand{\const}{\mathrm{const}\ }
\renewcommand{\epsilon}{\varepsilon}
\newcommand{\loc}{{\rm loc}}
\newcommand{\N}{\mathbb{N}}
\renewcommand{\phi}{\varphi}
\newcommand{\R}{\mathbb{R}}
\newcommand{\Sph}{\mathbb{S}}
\newcommand{\Z}{\mathbb{Z}}
\DeclareMathOperator{\ran}{ran}
\DeclareMathOperator{\spa}{span}
\DeclareMathOperator{\Tr}{Tr}
\begin{document}

\title[The sharp Sobolev inequality and its stability --- \version]{The sharp Sobolev inequality and its stability:\\ An introduction}

\author{Rupert L. Frank}
\address[Rupert L. Frank]{Mathe\-matisches Institut, Ludwig-Maximilans Universit\"at M\"unchen, The\-resienstr.~39, 80333 M\"unchen, Germany, and Munich Center for Quantum Science and Technology, Schel\-ling\-str.~4, 80799 M\"unchen, Germany, and Mathematics 253-37, Caltech, Pasa\-de\-na, CA 91125, USA}
\email{r.frank@lmu.de}

\thanks{\copyright\, 2023 by the author. This paper may be reproduced, in its entirety, for noncommercial purposes.\\
Partial support through US National Science Foundation grants DMS-1954995, as well as through the Deutsche Forschungsgemeinschaft Excellence Strategy EXC-2111-390814868 is acknowledged.}

\begin{abstract}
	These notes are an extended version of a series of lectures given at the CIME Summer School in Cetraro in June 2022. The goal is to explain questions about optimal functional inequalities on the example of the sharp Sobolev inequality and its fractional generalizations. Topics covered include compactness theorems for optimizing sequences, characterization of optimizers and quantitative stability.
\end{abstract}

\maketitle

\section*{Introduction and outline}

The Sobolev inequality on $\R^d$, $d\geq 3$, states that
\begin{equation}
	\label{eq:sobintro}
	\int_{\R^d} |\nabla u|^2\,dx \gtrsim \left( \int_{\R^d} |u|^{2d/(d-2)}\,dx \right)^{(d-2)/2} \,,
\end{equation}
provided the function $u$ belongs to the homogeneous Sobolev space $\dot H^1(\R^d)$, defined as the completion of $C_c^1(\R^d)$ with respect to the $L^2$-norm of the gradient. We restrict ourselves in these lectures to real-valued functions. The Sobolev inequality \eqref{eq:sobintro} is of great importance in several areas of mathematical analysis, including the calculus of variations, the theory of PDEs, differential geometry and mathematical physics.

The $\gtrsim$-sign in \eqref{eq:sobintro} means that there is a positive constant, depending only on $d$, such that the inequality holds with that constant on the right side. In several applications one is interested in the optimal value of this constant, that is, in the number
$$
S_{d} := \inf_{0\neq u\in\dot H^1(\R^d)} \frac{\int_{\R^d} |\nabla u|^2\,dx}{\left( \int_{\R^d} |u|^{2d/(d-2)}\,dx \right)^{(d-2)/2}} \,.
$$
Related to this is the question whether the supremum defining $S_d$ is attained for some function $u$ and, if so, whether one can characterize all such functions. Again motivated by applications, once this has been carried out one would like to know whether the fact that for some function $0\neq u\in \dot H^1(\R^d)$ the quotient between the left and right sides of \eqref{eq:sobintro} is close to the optimal value $S_{d}$ already implies that $u$ is close to a function for which the supremum is attained. This question is deliberately vague. One needs to specify in which sense the closeness between two functions is understood, and in which sense the closeness of the quotient to the optimal constant is related to the closeness between $u$ and optimal functions. It turns out that in the context of the Sobolev inequality all these questions can be answered, and this is the topic of this series of lectures.

\medskip

Let us take a step back from this concrete problem. The Sobolev inequality is just one (although a paradigmatic) example of a functional inequality and the questions outlined above can be equally asked for other such inequalities. This suggests the following research program in the field of functional inequalities:
\begin{enumerate}
	\item[(0)] Prove the validity of the functional inequality with some constant.
	\item[(1a)] Show that there are optimizing functions.
	\item[(1b)] Show that optimizing sequences are relatively compact (up to symmetries).
	\item[(2a)] Determine the optimal constant.
	\item[(2b)] Characterize the optimizers.
	\item[(2c)] Show that the Hessian around optimizers is nondegenerate (up to symmetries).
	\item[(3)] Show stability of the functional inequality.
\end{enumerate}
The meaning of some of these assertions might not be clear at this point, but the hope is that it will be at the end of this series of lectures. The rough plan of this course is to devote each one of the first three lectures to one of the above Steps 1, 2 and 3 and to spend the fourth lecture on a related, but different inequality, where we repeat all three steps in this new setting. Step 0, namely in our case the validity of the Sobolev inequality \eqref{eq:sobintro} with some constant, will be taken for granted. In fact, an improved version of this inequality will be proved in the first lecture.

The methods used in these steps vary widely. Those in Step 1 are probably the most robust, while those in Step 2 are probably the most specialized. In the context of the Sobolev inequality Step 3 consists of a combination of the Steps 1b and 2c. In the fourth lecture, however, we will see an example of a functional inequality where additional input is needed in this step.

\medskip

In order to emphasize the general nature of this program, we consider, apart from the Sobolev inequality \eqref{eq:sobintro}, also its fractional counterpart
\begin{equation}
	\label{eq:sobfracintro}
	\int_{\R^d} |(-\Delta)^{s/2} u |^2\,dx \gtrsim \left( \int_{\R^d} |u|^{2d/(d-2s)}\,dx \right)^{(d-2s)/d},
\end{equation}
where $s$ is a real number satisfying $0<s<d/2$. In terms of the Fourier transform
$$
\widehat u(\xi) := (2\pi)^{-d/2} \int_{\R^d} e^{-i\xi\cdot x} u(x)\,dx \,,
$$
the left side of \eqref{eq:sobfracintro} is equal to
\begin{equation}
	\label{eq:fraclapl}
	\int_{\R^d} |(-\Delta)^{s/2} u |^2\,dx = \int_{\R^d} |\xi|^{2s} |\widehat u(\xi)|^2\,d\xi \,.
\end{equation}
Inequality \eqref{eq:sobfracintro} is valid for function $u$ in the homogeneous Sobolev space $\dot H^s(\R^d)$ of tempered distributions whose Fourier transform belongs to $L^1_\loc(\R^d)$ and for which the right side of \eqref{eq:fraclapl} is finite; see, e.g., \cite[Section 1.3]{BaChDa}.

It is easy to see that for $s=1$ this definition of $\dot H^1(\R^d)$ coincides with that given before and that
$$
\int_{\R^d} |(-\Delta)^{s/2} u |^2\,dx = \int_{\R^d} |\nabla u |^2\,dx \,.
$$
Therefore \eqref{eq:sobfracintro} is indeed a generalization of \eqref{eq:sobintro}.

\medskip

In Lecture 4 we will discuss a version of the Sobolev inequality \eqref{eq:sobintro} on the manifold $(\R/T\Z)\times\Sph^{d-1}$, depending on the parameter $T>0$. A specific feature of this case is that for a certain value of $T$ property (2c) fails. It is instructive to see a repetition of the previous steps, both for those values of $T$ where (2c) holds and where it fails.

\medskip

Finally, we note that there have been some developments concerning the stability question for the Sobolev inequality since this course took place at the CIME Summer School in Cetraro in June 2022. We have made the decision not to include those in order to keep the character of these notes more elementary and instead to refer to the preprints \cite{DoEsFiFrLo,Ko1,Ko2}; see also the brief remarks at the end of the third lecture.

\medskip

It is my pleasure to thank the organizers of the summer school, Andrea Cianchi, Vladimir Maz'ya and Tobias Weth, as well as Paolo Salani for their kind invitation, as well as the participants of the school for their interest in these topics. I am grateful to Jean Dolbeault for his help with references and to Tobias K\"onig and Jonas Peteranderl for many useful comments on these notes.


\section*{Lecture 1: Optimizing sequences}

In this first lecture we are interested in the optimization problem
\begin{equation}
	\label{eq:l1quotient}
	S_{d,s} := \inf_{0\neq u\in \dot H^s(\R^d)} \frac{\|(-\Delta)^{s/2}u\|_2^2}{\| u\|_q^2} \,,
\end{equation}
where, as always in this series of lectures,
$$
0<s<\tfrac d2
\qquad\text{and}\qquad
q = \tfrac{2d}{d-2s} \,.
$$
More specifically, we are interested in
\begin{enumerate}
	\item[(a)] existence of an optimizer
	\item[(b)] relative compactness (up to symmetries) of optimizing sequences. 
\end{enumerate}
The difference between (a) and (b) is that for (a) it suffices to find \emph{one} optimizing sequence that converges, whereas for (b) one wants to show that \emph{any} optimizing sequence has a subsequence that converges (up to symmetries); so (b) is stronger than (a). For the arguments in Lecture 2 property (a) would be enough, but in Lecture 3 we need property (b), so this is what we will prove in this lecture.

Let us explain the main difficulty when dealing with the behavior of optimizing sequences and, at the same time, explain the expression `up to symmetries' in (b). A basic strategy in the calculus of variations to solve an optimization problem is to show that from an optimizing sequence one can extract a convergent subsequence and that its limit is an optimizer. Typically, the extracted subsequence converges a priori not in the original sense (here strong convergence in $\dot H^s(\R^d)$), but only in a weaker sense (namely weakly in $\dot H^s(\R^d)$). At this point the (noncompact) symmetries of the variational problem enter. If $u\in\dot H^s(\R^d)$ is a given function and if $(a_n)\subset\R^d$ and $(\lambda_n)\subset\R_+=(0,\infty)$ are sequences with $|a_n|+\lambda_n + \lambda_n^{-1} \to\infty$, then the sequence
$$
\lambda_n^{-d/q} \, u(\lambda_n^{-1}(\cdot - a_n))
$$
converges weakly to zero in $\dot H^s(\R^d)$. Moreover, for these sequences the quotient in \eqref{eq:l1quotient} is independent of $n$, reflecting the translation and dilation invariance of the optimization problem. For instance, if $u$ is an optimizer for \eqref{eq:l1quotient}, then every element of this sequence is an optimizer as well, and we have constructed an optimizing sequence that converges weakly to zero. This explains why in (b) we can hope for relative compactness at most \emph{up to translations and dilations}. These are the symmetries in question.

We now formulate the main result of this lecture, which is due to Lions.

\begin{theorem}\label{comp}
	Let $0<s<\frac d2$ and $q:=\frac{2d}{d-2s}$. Let $(u_n)\subset\dot H^s(\R^d)$ with $\|(-\Delta)^{s/2} u_n\|_2=1$ and $\|u_n\|_q^2 \to S_{d,s}^{-1}$. Then there is a subsequence $(u_{n_k})$, as well as sequences $(a_k)\subset\R^d$ and $(\lambda_k)\subset\R_+$ such that the sequence of functions
	$$
	\lambda_k^{-d/q} u_{n_k}(\lambda_k^{-1} (\cdot - a_k))
	$$
	converges in $\dot H^s(\R^d)$ to an optimizer of \eqref{eq:l1quotient}.
\end{theorem}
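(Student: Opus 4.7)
The plan is to implement Lions's concentration-compactness principle, combined with a Brezis--Lieb argument driven by the strict subadditivity of $t\mapsto t^{q/2}$ for $q>2$. Observe first that the scaling $u\mapsto \lambda^{-d/q}u(\lambda^{-1}(\cdot-a))$ preserves both $\|(-\Delta)^{s/2}u\|_2$ and $\|u\|_q$ (this is precisely why the exponent $d/q=(d-2s)/2$ appears in the statement). My first task is therefore to choose $a_n\in\R^d$ and $\lambda_n>0$ so that the rescaled sequence
$$
v_n := \lambda_n^{-d/q}\, u_n\bigl(\lambda_n^{-1}(\cdot-a_n)\bigr)
$$
is \emph{nonvanishing} in a suitable local sense. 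The key ingredient is a Lions-type vanishing lemma: any bounded sequence $(w_n)$ in $\dot H^s(\R^d)$ with $\sup_{a\in\R^d}\int_{B(a,r)}|w_n|^q\,dx\to 0$ for some (equivalently, every) $r>0$ must satisfy $\|w_n\|_q\to 0$. Since $\|u_n\|_q$ is bounded away from zero, a concentration-function argument based on this lemma allows me to choose $\lambda_n$ and $a_n$ so that the rescaled $v_n$ satisfy $\int_{B(0,1)}|v_n|^q\,dx\geq\delta>0$ uniformly in $n$.

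Passing to a subsequence, I may then assume $v_n\rightharpoonup v$ weakly in $\dot H^s(\R^d)$ and a.e.\ pointwise (via local Rellich-type compactness, which remains valid for fractional $s$). The uniform local $L^q$-mass bound, combined with local compactness, forces $v\not\equiv 0$. Next I quantify what weak convergence loses by combining the Hilbert-space identity
$$
\|(-\Delta)^{s/2} v_n\|_2^2 = \|(-\Delta)^{s/2} v\|_2^2 + \|(-\Delta)^{s/2} (v_n-v)\|_2^2 + o(1)
$$
with the Brezis--Lieb identity
$$
\|v_n\|_q^q = \|v\|_q^q + \|v_n-v\|_q^q + o(1).
$$
Setting $\alpha:=\|(-\Delta)^{s/2}v\|_2^2\in(0,1]$ so that $\|(-\Delta)^{s/2}(v_n-v)\|_2^2\to 1-\alpha$, and applying \eqref{eq:sobfracintro} separately to $v$ and to $v_n-v$ while using $\|v_n\|_q^q\to S_{d,s}^{-q/2}$, I obtain
$$
S_{d,s}^{-q/2} \leq S_{d,s}^{-q/2}\bigl(\alpha^{q/2}+(1-\alpha)^{q/2}\bigr).
$$
Since $q>2$, the map $t\mapsto t^{q/2}+(1-t)^{q/2}$ is strictly less than $1$ on $(0,1)$, so combined with $\alpha>0$ this forces $\alpha=1$. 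Consequently $\|(-\Delta)^{s/2}(v_n-v)\|_2\to 0$, i.e.\ $v_n\to v$ strongly in $\dot H^s(\R^d)$, and $v$ attains equality in the Sobolev inequality, hence is an optimizer of \eqref{eq:l1quotient}.

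The technical heart of the argument, and the main potential obstacle, is the nonvanishing step. In the integer case $s=1$ one can appeal directly to Rellich--Kondrachov on balls, but for fractional $s$ the analogous vanishing lemma is more delicate; natural routes are via Littlewood--Paley decomposition, a Besov-space refinement of the Sobolev embedding, or a dyadic concentration-function argument that simultaneously pins down the scale $\lambda_n$ and the translation $a_n$. Once nonvanishing is secured, the remainder of the proof is essentially the ``strict subadditivity forces concentration'' mechanism exploiting $\alpha^{q/2}+(1-\alpha)^{q/2}<1$ on $(0,1)$.
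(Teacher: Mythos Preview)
Your two-step architecture --- secure a nonzero weak limit after translation and dilation, then run Brezis--Lieb plus strict subadditivity to upgrade weak to strong convergence --- is exactly the paper's. Your Step~2 is correct and essentially identical to the paper's ``missing mass'' argument, just written with $\alpha^{q/2}+(1-\alpha)^{q/2}\le 1$ in place of the equivalent $(a+b)^{2/q}\le a^{2/q}+b^{2/q}$.

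Step~1, however, has a genuine gap in the specific form you state, and you are right to flag it as the crux. The vanishing lemma you invoke is \emph{false} at the critical exponent: the spreading sequence $w_n(x)=n^{-(d-2s)/2}w(x/n)$ is bounded in $\dot H^s$ with $\|w_n\|_q$ constant, yet $\sup_a\int_{B(a,r)}|w_n|^q\,dx\to 0$ for every fixed $r$. More seriously, even if a concentration-function argument hands you $\int_{B(0,1)}|v_n|^q\ge\delta$, this does \emph{not} force $v\not\equiv 0$: the embedding $\dot H^s\hookrightarrow L^q_\loc$ is not compact at the critical exponent, as the concentrating sequence $v_n(x)=n^{(d-2s)/2}\phi(nx)$ shows (it keeps unit $L^q$-mass in $B(0,1)$ while converging weakly to zero). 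Local Rellich only gives $L^p_\loc$ compactness for $p<q$, which yields a.e.\ convergence but says nothing about $L^q$-mass of the limit.

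The paper resolves this via precisely one of the alternatives you list: the G\'erard--Meyer--Oru refined Sobolev inequality
\[
\|u\|_q \lesssim \Bigl(\sup_{t>0}\, t^{(d-2s)/4}\|\chi(-t\Delta)u\|_\infty\Bigr)^{1-2/q}\|(-\Delta)^{s/2}u\|_2^{2/q}.
\]
Along an optimizing sequence the Besov-type supremum stays bounded below, so one can choose $(t_n,a_n)$ nearly realizing it; after the corresponding dilation and translation the rescaled sequence satisfies $|\int g\,v_n\,dx|\gtrsim 1$ for a \emph{fixed} $g\in L^{q'}$. This linear pairing is weakly continuous, which immediately gives $v\neq 0$ and sidesteps both obstacles above. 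Lions's original concentration-function route can also be made to work, but it requires the full second concentration-compactness lemma (structure of defect measures) rather than the bare vanishing statement.
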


Before embarking into the details of the proof, let us give a rough outline of the strategy. We argued above that translations and dilations are a possible loss of compactness. One key step in the proof of Theorem \ref{comp} is to show that translations and dilations (and their combination) are the \emph{only} possible loss of compactness: that is, after applying suitable translations and dilations one can always ensure that the weak limit of a subsequence is nonzero. The mathematical tool here is a refinement of the Sobolev inequality (see Proposition \ref{refined} below), which involves an extra term containing a supremum over dilation and translation parameters. This allows one to translate and dilate the elements of an optimizing sequence such that this extra term stays away from zero, which translates into the integral against a fixed function in $L^{q'}(\R^d)$ (here and throughout: $q'=q/(q-1)$) being bounded away from zero. This implies that the weak limit is nonzero. Thus, we have shown that `there is something somewhere'.

The second step in the proof of Theorem \ref{comp} is to show that `there is nothing else anywhere else'. To explain the argument we assume that no translation, no dilation and no subsequence is necessary and we denote by $u$ the nonzero weak limit of $(u_n)$. Writing $u_n = u+r_n$ one can show that both the numerator and the denominator in the quotient in \eqref{eq:l1quotient} asymptotically decouple in the sense that
$$
\|(-\Delta)^{s/2} u_n\|_2^2 = \|(-\Delta)^{s/2} u\|_2^2 + \|(-\Delta)^{s/2} r_n\|_2^2 + o(1)
\ \ \text{and}\ \
\| u_n\|_q^q = \|u\|_q^q + \|r_n\|_q^q + o(1)\,.
$$
The strict subadditivity of the function $\mu\mapsto \mu^{2/q}$ (since $q>2$) can then be used to show that it is favorable to keep all the mass together, that is, to have $r_n$ tending to zero. This argument is due to Lieb and referred to as the \emph{method of the missing mass}.

We now turn to the details of the proof of Theorem \ref{comp}.


\subsection*{Step 1. There is something somewhere.}

We begin by proving the following \emph{refined Sobolev inequality}.

\begin{proposition}\label{refined}
	Let $0<s<\frac d2$ and $q:=\frac{2d}{d-2s}$. Let $\chi\in C^\infty_c(\overline{\R_+})$ with $\chi=1$ near the origin. Then for all $u\in\dot H^s(\R^d)$,
	\begin{equation}
		\label{eq:refined}
			\| u \|_q \lesssim \left( \sup_{t>0} t^{(d-2s)/4} \| \chi(-t\Delta) u \|_\infty \right)^{1-2/q} \| (-\Delta)^{s/2} u \|_2^{2/q}.
	\end{equation}
\end{proposition}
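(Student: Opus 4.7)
The plan is to prove the refined Sobolev inequality by combining a Littlewood--Paley decomposition built from $\chi$ with a careful layer-cake / distribution-function argument, in which an order-of-integration swap uncovers an exponent cancellation that is precisely made possible by the scaling-critical choice $q = 2d/(d-2s)$.

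I would set $A := \sup_{t>0} t^{(d-2s)/4}\|\chi(-t\Delta)u\|_\infty$ and $G := \|(-\Delta)^{s/2}u\|_2$, assume both are finite, and introduce the dyadic pieces $S_N u := \chi(-4^{-N}\Delta)u$ and $\Delta_N u := S_N u - S_{N-1}u$, so that $\Delta_N u$ has Fourier support in an annulus $|\xi| \sim 2^N$ and $u = \sum_{N\in\Z}\Delta_N u$. Two basic estimates follow at once:
\begin{equation*}
\|S_N u\|_\infty \leq A\, 2^{N(d-2s)/2} \qquad \text{(definition of } A \text{ with } t = 4^{-N}\text{)},
\end{equation*}
and, setting $g_N := \|(-\Delta)^{s/2}\Delta_N u\|_2$, Plancherel together with the Fourier localization of $\Delta_N u$ gives $\|\Delta_N u\|_2 \lesssim 2^{-Ns} g_N$, while almost-orthogonality of the LP pieces yields $\sum_N g_N^2 \lesssim G^2$. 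In particular $\|u - S_N u\|_2^2 \lesssim \sum_{M > N} 4^{-Ms} g_M^2$ for every integer $N$.

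Next I would establish a level-set estimate: for each $\alpha > 0$ I pick the integer $N(\alpha)$ so that $\|S_{N(\alpha)} u\|_\infty \leq \alpha/2$, that is, $2^{N(\alpha)} \sim (\alpha/A)^{2/(d-2s)}$. Since $|u|>\alpha$ forces $|u - S_{N(\alpha)}u| > \alpha/2$, Chebyshev plus the previous $L^2$ bound yield
\begin{equation*}
\bigl|\{|u| > \alpha\}\bigr| \,\lesssim\, \alpha^{-2}\!\sum_{M > N(\alpha)} 4^{-Ms} g_M^2.
\end{equation*}
Inserting this into the layer-cake formula and swapping sum and integral,
\begin{equation*}
\|u\|_q^q \,=\, q\!\int_0^\infty\!\! \alpha^{q-1}|\{|u|>\alpha\}|\,d\alpha \,\lesssim\, \sum_{M} 4^{-Ms} g_M^2 \int_{\{\alpha\,:\, N(\alpha) < M\}} \alpha^{q-3}\,d\alpha.
\end{equation*}
The set $\{\alpha : N(\alpha) < M\}$ is $\{\alpha \lesssim A\,2^{M(d-2s)/2}\}$, so for $q>2$ the inner integral equals a constant multiple of $A^{q-2}\, 2^{M(d-2s)(q-2)/2}$. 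The critical identity $(d-2s)(q-2)/2 = 2s$, which holds precisely because $q = 2d/(d-2s)$, turns this into $A^{q-2}\,4^{Ms}$, exactly cancelling the $4^{-Ms}$ factor. Therefore $\|u\|_q^q \lesssim A^{q-2}\sum_M g_M^2 \lesssim A^{q-2}G^2$, which after taking $q$-th roots is the claimed inequality.

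The main obstacle I expect is the temptation to simplify prematurely. Using the crude bound $\sum_{M>N(\alpha)} 4^{-Ms}g_M^2 \leq 4^{-N(\alpha)s}G^2$ leads only to the weak-$L^q$ estimate $|\{|u|>\alpha\}| \lesssim A^{q-2}G^2 \alpha^{-q}$, whose layer-cake integral is the logarithmically divergent $\int \alpha^{-1}\,d\alpha$. The whole point of the argument is therefore to keep the full $M$-sum inside Chebyshev and to perform the Fubini exchange \emph{before} applying $\sum_M g_M^2 \lesssim G^2$; the exponent match $(d-2s)(q-2)/2 = 2s$ is the arithmetic heart of the proof, and any verification that this really happens (together with a careful treatment of the partition-of-unity constants in the Littlewood--Paley decomposition associated with the general admissible $\chi$) is where the technical work concentrates.
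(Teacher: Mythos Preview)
Your proof is correct and follows essentially the same route as the paper: layer-cake formula, a frequency cutoff with parameter tuned to the level so that the low-frequency piece vanishes, Chebyshev on the remaining high-frequency tail, then a Fubini swap in which the scaling identity $(d-2s)(q-2)/2 = 2s$ produces the cancellation. The paper's version is slightly leaner in that it keeps the cutoff parameter $t_\tau$ continuous rather than dyadic and, after Plancherel and Fubini, directly evaluates $\int_0^\infty \tau^{q-3}\,|1-\chi(t_\tau|\xi|^2)|^2\,d\tau = C\,|\xi|^{2s}M_u^{q-2}$ by scaling, so no Littlewood--Paley decomposition or almost-orthogonality argument is needed.
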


The supremum in \eqref{eq:refined} is one of the possible, equivalent norms in the Besov space $\dot B^{s-d/2}_{\infty,\infty}(\R^d)$. Our presentation, however, is selfcontained and does not need anything from the theory of these spaces.

We call the inequality in the proposition a `refined' Sobolev inequality since it implies the Sobolev inequality (with nonsharp constant). To see this, we note that the Fourier multiplier $\chi(-t\Delta)$ acts as convolution with the function $t^{-d/2} g(t^{-1/2}\,\cdot)$, where
\begin{equation}
	\label{eq:defk}
	g(x) := (2\pi)^{-d} \int_{\R^d} \chi(|\xi|^2) e^{-i\xi\cdot x} \,d\xi \,.
\end{equation}
Since $g\in L^{q'}(\R^d)$, it follows from H\"older's inequality that
$$
\| \chi(-t\Delta) u \|_\infty \leq \| t^{-d/2} g(t^{-1/2}\,\cdot)\|_{q'} \|u\|_q = t^{-(d-2s)/4} \|g\|_{q'} \|u\|_q \,.
$$
Thus, the supremum in \eqref{eq:refined} is $\leq \|g\|_{q'} \|u\|_q$. Inserting this inequality into \eqref{eq:refined}, we obtain the Sobolev inequality with nonsharp constant.
	
\begin{proof}
	Using the layer cake representation (see, e.g., \cite[Theorem 1.13]{LiLo}), we write
	$$
	\| u \|_q^q = q \int_0^\infty |\{ |u|>\tau \}| \tau^{q-1}\,d\tau
	$$
	and bound for each fixed $\tau>0$ with some $t>0$ to be specified
	$$
	|\{ |u|>\tau \}| \leq |\{ |\chi(-t\Delta) u|>\tau/2 \}| + |\{ |(1-\chi(-t\Delta))u|>\tau/2 \}| \,.
	$$
	In particular, choosing $t=t_\tau$ such that
	$$
	\tau/2 = t_\tau^{-(d-2s)/4} M_u \,,
	\qquad\text{where}\ M_u := \sup_{t>0} t^{(d-2s)/4} \| \chi(-t\Delta) u \|_\infty \,,
	$$
	we see that
	$$
	|\{ |\chi(-t_\tau \Delta) u|>\tau/2 \}| = 0 \,.
	$$
	Meanwhile, we bound
	$$
	|\{ |(1-\chi(-t_\tau \Delta))u|>\tau/2 \}| \leq (2/\tau)^2 \| (1-\chi(-t_\tau\Delta)) u \|_2^2
	$$
	and arrive at
	\begin{align*}
		\| u \|_q^q & \leq q \int_0^\infty (2/\tau)^2 \| (1-\chi(-t_\tau\Delta)) u \|_2^2 \, \tau^{q-1}\,d\tau \\
		& = q \int_{\R^d} |\widehat u(\xi)|^2 \int_0^\infty (2/\tau)^2 |1-\chi(-t_\tau |\xi|^2)|^2 \tau^{q-1}\,d\tau\,d\xi \,,
	\end{align*}
	where $\widehat u$ denotes the Fourier transform of $u$. By scaling, we find
	$$
	q \int_0^\infty (2/\tau)^2 |1-\chi(-t_\tau |\xi|^2)|^2 \tau^{q-1}\,d\tau = C_{d,s,\chi} |\xi|^{2s} M_u^{q-2} \,.
	$$
	The constant $C_{d,s,\chi}$ is finite by the properties of $\chi$, thus proving the proposition.
\end{proof}

We now apply Proposition \ref{refined} in the setting of Theorem \ref{comp}. Let $(u_n)\subset\dot H^s(\R^d)$ be a sequence with
\begin{equation}
	\label{eq:normalized}
	\|(-\Delta)^{s/2} u_n\|_2 = 1
\end{equation}
and
\begin{equation}
	\label{eq:somelq}
	\limsup_{n\to\infty} \| u_n \|_q > 0 \,.
\end{equation}
Note that this is, in particular, satisfied for an optimizing sequence. In this step, however, we will use the weaker property \eqref{eq:somelq} rather than the optimizing property.

Our goal is to show that after a translation and a dilation, $(u_n)$ has a subsequence with nonzero weak limit.

Inserting \eqref{eq:normalized} and \eqref{eq:somelq} into \eqref{eq:refined}, we deduce that
$$
\limsup_{n\to\infty} \sup_{t>0 \,,\ a\in\R^d} t^{(d-2s)/4} |(\chi(-t\Delta) u_n)(a)| >0 \,.
$$
Choosing $t_n>0$ and $a_n\in\R^d$ such that
$$
t_n^{(d-2s)/4} |(\chi(-t_n\Delta) u_n)(a_n)| \geq \tfrac12\, \sup_{t>0 \,,\ a\in\R^d} t^{(d-2s)/4} |(\chi(-t\Delta) u_n)(a)| \,,
$$
we see that 
$$
\limsup_{n\to\infty} t_n^{(d-2s)/4} |(\chi(-t_n\Delta) u_n)(a_n)| >0 \,.
$$
Thus, the translated and dilated functions
$$
\widetilde u_n(x) := t_n^{(d-2s)/4} u_n(t_n^{1/2}(x-a_n))
$$
satisfy
$$
\| (-\Delta)^{s/2} \widetilde u_n \|_2 = \| (-\Delta)^{s/2} u_n \|_2 \,,
\qquad
\| \widetilde u_n \|_q= \| u_n \|_q
$$
and, with $g$ defined in \eqref{eq:defk},
$$
\int_{\R^d} g(x) \widetilde u_n(x)\,dx = t_n^{(d-2s)/4} (\chi(-t_n\Delta) u_n)(a_n) \,.
$$
By weak compactness, we obtain a subsequence and a $\widetilde u\in\dot H^s(\R^d)$ such that $\widetilde u_{n_k} \rightharpoonup \widetilde u$ in $\dot H^s(\R^d)$. We can choose the subsequence in such a way that in addition
$$
\liminf_{k\to\infty} \left| \int_{\R^d} g(x) \widetilde u_{n_k}(x)\,dx \right| > 0 \,.
$$
By the Sobolev inequality, $\widetilde u_{n_k}\rightharpoonup u$ in $L^q(\R^d)$ and, since $g\in L^{q'}(\R^d)$, we find
$$
\left| \int_{\R^d} g(x) \widetilde u(x)\,dx \right| =
\liminf_{k\to\infty} \left| \int_{\R^d} g(x) \widetilde u_{n_k}(x)\,dx \right| > 0 \,.
$$
Thus, $\widetilde u\neq 0$, as we set out to prove.

\subsection*{Step 2. There is nothing else anywhere else.}

Let $(u_n)$ be a minimizing sequence for \eqref{eq:l1quotient}. We normalize the sequence as in \eqref{eq:normalized}. After translations, dilations and passing to a subsequence, we may assume that
$$
u_n \rightharpoonup u \ \text{in}\ \dot H^s(\R^d)
\qquad\text{with}\ u\neq 0 \,.
$$
We write
$$
u_n = u + r_n
\qquad\text{with}\ r_n\rightharpoonup 0 \ \text{in}\ \dot H^s(\R^d) \,.
$$
From the Hilbert space structure of $\dot H^s(\R^d)$ and the normalization \eqref{eq:normalized} we immediately deduce that
\begin{equation}
	\label{eq:mm1}
	t:= \lim_{n\to\infty} \|(-\Delta)^{s/2} r_n\|_2^2
	\qquad\text{exists and satisfies}\qquad
	1 = \|(-\Delta)^{s/2} u \|_2^2 + t \,. 
\end{equation}

We now argue that
\begin{equation}
	\label{eq:mm2}
	m:= \lim_{n\to\infty} \| r_n\|_q^q
	\qquad\text{exists and satisfies}\qquad
	S_{d,s}^{-q/2} = \| u \|_q^q + m \,. 
\end{equation}
Indeed, from the weak convergence $u_n\rightharpoonup u$ in $\dot H^s(\R^d)$ one can deduce that $u_n\to u$ in $L^2_\loc(\R^d)$ (arguing as in \cite[Theorem 8.6]{LiLo}) and then, after passing to a subsequence, $u_n\to u$ almost everywhere. Thus, by the Brezis--Lieb lemma \cite[Theorem 1.9]{LiLo},
\begin{equation}
	\label{eq:bl}
	\lim_{n\to\infty} \int_{\R^d} \left| |u_n|^q - |u|^q - |u_n-u|^q \right|dx = 0 \,.
\end{equation}
The optimizing property of $(u_n)$ and the normalization \eqref{eq:normalized} imply that $\| u_n \|_q^2 \to S_{d,s}^{-1}$. Inserting this information into \eqref{eq:bl}, we obtain \eqref{eq:mm2} along a subsequence. By a standard argument it holds in fact along the full sequence. (Otherwise, there existed a subsequence such that $\lim_{k\to\infty} \|r_{n_k}\|_q$ exists and is different from $S_{d,s}^{-q/2} - \|u\|_q^q$. Repeating the above argument for this subsequence, we arrive at a contradiction.) This proves~\eqref{eq:mm2}.

From the Sobolev inequality, we know that $\|(-\Delta)^{s/2}r_n\|_2^2 \geq S_{d,s}\|r_n\|_q^2$ and therefore
\begin{equation}
	\label{eq:mm3}
	t \geq S_{d,s} m^{2/q} \,.
\end{equation}

Putting \eqref{eq:mm1}, \eqref{eq:mm2} and \eqref{eq:mm3} together, we find
\begin{align*}
	1 & = \|(-\Delta)^{s/2} u \|_2^2 + t \geq \|(-\Delta)^{s/2} u \|_2^2 + S_{d,s} m^{2/q} = 
	\|(-\Delta)^{s/2} u \|_2^2 + \left( 1 - S_{d,s}^{q/2} \|u\|_q^q \right)^{2/q} \\
	& \geq 	\|(-\Delta)^{s/2} u \|_2^2 + 1 - S_{d,s} \|u\|_q^2 \,. 
\end{align*}
In the last inequality we used the elementary fact that
\begin{equation}
	\label{eq:elem}
	(a+b)^{2/q} \leq a^{2/q} + b^{2/q}
	\qquad\text{for all}\ a,b\geq 0 \,,
\end{equation}
which relies on the fact that $q\geq 2$. Thus, we have shown that $\|(-\Delta)^{s/2} u \|_2^2 \leq S_{d,s} \|u\|_q^2$, which, taking into account that $u\neq 0$, implies that $u$ is an optimizer for $S_{d,s}$. Thus, we have accomplished our first goal, namely showing the existence of an optimizer.

To reach our second goal, namely showing relative compactness of optimizing sequences, we observe that, since $q>2$, equality in \eqref{eq:elem} occurs only when $a$ or $b$ is zero. Since in our application $b=S_{d,s}^{q/2}\|u\|_q^q$ is nonzero, we conclude that $a=1- S_{d,s}^{q/2} \|u\|_q^q$ is zero. According to \eqref{eq:mm2} this means that $m=0$. Since we also need to have equality in \eqref{eq:mm3}, we conclude that $t=0$, that is, $r_n\to 0$ in $\dot H^s(\R^d)$. Note that this is \emph{strong} convergence. Thus, we have shown $u_n\to u$ in $\dot H^s(\R^d)$, as claimed.


\subsection*{Appendix: The Hardy--Littlewood--Sobolev inequality}

Several results mentioned in this series of lectures were originally proved for a family of functional inequalities called \emph{Hardy--Littlewood--Sobolev inequalities}, which is in a certain sense dual to the family of Sobolev inequalities considered here. While we have consistently used the latter formulation, it is worthwhile to explain this connection.

The family of Hardy--Littlewood--Sobolev (HLS) inequalities is a two-parameter family of inequalities, depending on parameters $0<\lambda<d$ and $1<p<\frac{d}{d-\lambda}$, and states that
$$
\left\| |x|^{-\lambda} * f \right\|_q \lesssim \left\| f \right\|_p
\qquad\text{with}\ \tfrac{1}{q} = \tfrac1p - \tfrac{d-\lambda}{d} \,.
$$
This is a generalization of Young's convolution inequality, where the functions $|x|^{-\lambda}$ do not belong to the Lebesgue space $L^{d/\lambda}(\R^d)$, but only to its weak counterpart.

Relevant for us are two one-parameter families, namely those corresponding to $p=2$ and to $q=2$. The inequalities in these cases are dual to each other, which means, in particular, that their optimal constants coincide. In this appendix we will explain what this duality implies for the questions of existence and characterization of optimizers, as well as the relative compactness of optimizing sequences. (There is yet another family, corresponding to $q=p'$, that is equivalent, but we will not discuss it here.)

The relation between the Sobolev inequalities discussed in the main part of these lectures and the HLS inequalities discussed in this appendix comes from the well-known fact (see, e.g., \cite[Theorem 5.9]{LiLo}) that the operator $(-\Delta)^{-s}$ has integral kernel
\begin{equation}
	\label{eq:intkernelinvfraclap}
	(-\Delta)^{-s}(x,x') = 2^{-2s}\, \pi^{-d/2}\, \frac{\Gamma(\frac d2 -s)}{\Gamma(s)}\, |x-x'|^{-d+2s} \,.
\end{equation}
Thus, writing the Sobolev inequality
$$
\| (-\Delta)^{s/2} u \|_2^2 \geq S_{d,s} \|u\|_q^2
\qquad\text{for all}\ u\in \dot H^s(\R^d) \,,
$$
in the equivalent form
\begin{equation}
	\label{eq:hlsprimal}
	\left\| (-\Delta)^{-s/2} f \right\|_q \leq S_{d,s}^{-1/2} \| f\|_2 
	\qquad\text{for all}\ f\in L^2(\R^d) \,,
\end{equation}
we obtain the HLS inequality with $p=2$. Moreover, $S_{d,s}$ being the sharp constant in the Sobolev inequality means that $S_{d,s}^{-1/2}$ is the norm of the operator $(-\Delta)^{-s/2}$ from $L^2(\R^d)$ to $L^q(\R^d)$, and therefore, up to the prefactor in the integral kernel, the optimal constant in the HLS inequality with the exponent $2$ on the right side.

We apply duality and pass to the HLS inequality with the exponent $2$ on the left side of the inequality. Duality implies that $S_{d,s}^{-1/2}$ is equal to the norm of the operator $(-\Delta)^{-s/2}$ from $L^{q'}(\R^d)$ to $L^2(\R^d)$,
\begin{equation}
	\label{eq:hlsdual}
	\left\| (-\Delta)^{-s/2} g \right\|_2 \leq S_{d,s}^{-1/2} \| g\|_{q'} 
	\qquad\text{for all}\ g\in L^{q'}(\R^d) \,.
\end{equation}
This is the form of the HLS inequality in which it is most naturally studied in connection with sharp constants, compactness and conformal invariance. Note also that, by \eqref{eq:intkernelinvfraclap},
$$
\left\| (-\Delta)^{-s/2} g \right\|_2^2 = 2^{-2s}\, \pi^{-d/2}\, \frac{\Gamma(\frac d2 -s)}{\Gamma(s)}\, \iint_{\R^d\times\R^d} \frac{g(x)\,g(x')}{|x-x'|^{d-2s}}\,dx\,dx' \,.
$$

We now show that optimizing sequences for \eqref{eq:hlsprimal} and \eqref{eq:hlsdual} are in one-to-one correspondence with each other and that convergence of optimizing sequences is equivalent for both problems. We carry this out in a more general setting.

\begin{lemma}\label{dual}
	Let $\mathcal H$ be a Hilbert space, let $X$ be a measure space and $1<q<\infty$. Let $A:\mathcal H\to L^q(X)$ be a bounded linear operator, let $A^*: L^{q'}(X)\to\mathcal H$ be its adjoint and let $\alpha:= \|A\|=\|A^*\|$.
	\begin{enumerate}
		\item[(a1)] If $f\in\mathcal H$ satisfies $\|f\|_{\mathcal H}=1$ and $\|Af\|_q=\alpha$, then 
		$$
		g:=\|Af\|_q^{1-q} |Af|^{q-2}Af
		$$
		satisfies $\|g\|_{q'}=1$ and $\|A^* g\|_{\mathcal H}= \alpha$.
		\item[(a2)] If $(f_n)\subset\mathcal H$ satisfies $\|f_n\|_{\mathcal H}=1$ and $\|Af_n\|_q\to\alpha$, then 
		$$
		g_n:=\|Af_n\|_q^{1-q} |Af_n|^{q-2}Af_n
		$$
		satisfies $\|g_n\|_{q'}=1$ and $\|A^* g_n\|_{\mathcal H}\to \alpha$. If, in addition, $g_n\to g$ in $L^{q'}(X)$, then $f_n\to \|A^* g\|_{\mathcal H}^{-1} A^* g$ in $\mathcal H$.
		\item[(b1)] If $g\in L^{q'}(X)$ satisfies $\|g\|_{q'}=1$ and $\|A^*g\|_{\mathcal H} = \alpha$, then
		$$
		f := \|A^* g\|_{\mathcal H}^{-1} A^* g
		$$
		satisfies $\|f\|_{\mathcal H}=1$ and $\|A f\|_q= \alpha$.
		\item[(b2)] If $(g_n)\subset L^{q'}(X)$ satisfies $\|g_n\|_{q'}=1$ and $\|A^*g_n\|_{\mathcal H} \to \alpha$, then
		$$
		f_n := \|A^* g_n\|_{\mathcal H}^{-1} A^* g_n
		$$
		satisfies $\|f_n\|_{\mathcal H}=1$ and $\|A f_n\|_q\to \alpha$. If, in addition, $f_n\to f$ in $\mathcal H$, then $g_n\to \|Af\|_q^{1-q}|Af|^{q-2} Af$ in $L^{q'}(X)$.
	\end{enumerate}
\end{lemma}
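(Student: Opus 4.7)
All four statements rest on a single identity: if $f\in\mathcal H$ satisfies $Af\not\equiv 0$, then the function $g:=\|Af\|_q^{1-q}|Af|^{q-2}Af$ obeys
\[
\|g\|_{q'}=1 \qquad\text{and}\qquad \int_X (Af)\,g\,dx=\|Af\|_q,
\]
as one checks from $\int|g|^{q'}\,dx=\|Af\|_q^{-q}\int|Af|^q\,dx=1$ (using $(q-1)q'=q$) and $\int(Af)\,g\,dx=\|Af\|_q^{1-q}\int|Af|^q\,dx=\|Af\|_q$. Combined with the pairing identity $\int(Af)\,g\,dx=\langle f,A^*g\rangle_{\mathcal H}$, Cauchy--Schwarz in $\mathcal H$, and the operator bounds $\|Af\|_q\le\alpha\|f\|_{\mathcal H}$ and $\|A^*g\|_{\mathcal H}\le\alpha\|g\|_{q'}$, this identity forces certain chains of inequalities to collapse to equalities, from which everything in the lemma follows.

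\textbf{Parts (a1), (b1), and the norm halves of (a2), (b2).} For (a1) the identity gives
\[
\alpha=\|Af\|_q=\int_X (Af)\,g\,dx=\langle f,A^*g\rangle_{\mathcal H}\le\|A^*g\|_{\mathcal H}\le\alpha\|g\|_{q'}=\alpha,
\]
so $\|A^*g\|_{\mathcal H}=\alpha$. Substituting $f_n,g_n$ the same chain produces $\|Af_n\|_q=\langle f_n,A^*g_n\rangle_{\mathcal H}\le\|A^*g_n\|_{\mathcal H}\le\alpha$, so the assumption $\|Af_n\|_q\to\alpha$ squeezes $\|A^*g_n\|_{\mathcal H}\to\alpha$, which is the norm half of (a2). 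Parts (b1) and the norm half of (b2) are the symmetric computation: with $f:=\|A^*g\|_{\mathcal H}^{-1}A^*g$ one has $\|f\|_{\mathcal H}=1$ by construction, while
\[
\|A^*g\|_{\mathcal H}=\langle f,A^*g\rangle_{\mathcal H}=\int_X (Af)\,g\,dx\le\|Af\|_q\|g\|_{q'}\le\alpha
\]
once more forces equality and hence $\|Af\|_q=\alpha$ (and, passed to sequences, $\|Af_n\|_q\to\alpha$).

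\textbf{Convergence halves of (a2) and (b2).} For (a2), with $\tilde f:=\alpha^{-1}A^*g$ one has $\|f_n\|_{\mathcal H}=\|\tilde f\|_{\mathcal H}=1$, so
\[
\|f_n-\tilde f\|_{\mathcal H}^2=2-2\alpha^{-1}\langle Af_n,g\rangle.
\]
Splitting $\langle Af_n,g\rangle=\langle Af_n,g_n\rangle+\langle Af_n,g-g_n\rangle$, the first piece equals $\|Af_n\|_q\to\alpha$ by the identity, while the second is bounded by $\|Af_n\|_q\|g-g_n\|_{q'}\to 0$, proving $\|f_n-\tilde f\|_{\mathcal H}\to 0$. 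The convergence half of (b2) is the step I expect to require the most care, because here the conclusion is strong $L^{q'}$-convergence rather than an inner-product identity. From $f_n\to f$ in $\mathcal H$ together with $\|Af_n\|_q\to\alpha$ we get $Af_n\to Af$ in $L^q(X)$ with $\|Af\|_q=\alpha$, hence along a subsequence $Af_{n_k}\to Af$ almost everywhere. The map $h\mapsto\|h\|_q^{1-q}|h|^{q-2}h$ is pointwise continuous wherever $h\neq 0$, giving $g_{n_k}\to\tilde g:=\alpha^{1-q}|Af|^{q-2}Af$ almost everywhere. Combined with the norm equality $\|g_{n_k}\|_{q'}=1=\|\tilde g\|_{q'}$, the Brezis--Lieb lemma yields $\|g_{n_k}-\tilde g\|_{q'}\to 0$, and a standard subsequence-of-subsequence argument extends this to the full sequence.
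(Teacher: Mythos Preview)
Your argument for (a1), (b1), and the norm halves of (a2) and (b2) is correct and matches the paper's approach. Your convergence argument for (a2) is also correct and in fact slightly cleaner than the paper's: rather than extracting a weak limit point of $(f_n)$ and invoking the equality case of Cauchy--Schwarz, you compute $\|f_n - \tilde f\|_{\mathcal H}^2$ directly. (One small point you glossed over: $\|\tilde f\|_{\mathcal H}=1$ because $A^*g_n \to A^*g$ in $\mathcal H$ together with $\|A^*g_n\|_{\mathcal H}\to\alpha$ gives $\|A^*g\|_{\mathcal H}=\alpha$.)

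The convergence half of (b2), however, has a genuine gap. You write that applying the map $h \mapsto \|h\|_q^{1-q}|h|^{q-2}h$ to $Af_{n_k}$ yields $g_{n_k}$, but this identity is not part of the hypotheses: in (b2) the sequence $(g_n)$ is \emph{given} and $f_n$ is defined from it as $\|A^*g_n\|_{\mathcal H}^{-1}A^*g_n$, so there is no a priori pointwise relation between $g_n$ and $Af_n$. What your argument actually establishes is a.e.\ convergence of the \emph{auxiliary} sequence $\|Af_{n_k}\|_q^{1-q}|Af_{n_k}|^{q-2}Af_{n_k}$ to $\tilde g$, which says nothing directly about $g_{n_k}$.

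The paper closes this gap via weak compactness and the equality case of H\"older: since $1<q'<\infty$, the bounded sequence $(g_n)$ has a weak limit point $g'$ in $L^{q'}(X)$; passing to the limit in $\int_X (Af_n)\,g_n = \|A^*g_n\|_{\mathcal H} \to \alpha$ (using $Af_n \to Af$ strongly in $L^q$) gives $\int_X (Af)\,g' = \alpha$, while $\|g'\|_{q'} \le 1$ and $\|Af\|_q = \alpha$. This forces $\|g'\|_{q'}=1$ and equality in H\"older, hence $g' = \alpha^{1-q}|Af|^{q-2}Af = \tilde g$. Weak convergence together with convergence of norms then upgrades to strong $L^{q'}$-convergence (Radon--Riesz in the uniformly convex space $L^{q'}$), and uniqueness of the limit handles the full sequence.
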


We apply this lemma with $\mathcal H = L^2(\R^d)$, $X=\R^d$ and $A=(-\Delta)^{-s/2}$. We infer that the optimal constant in \eqref{eq:hlsprimal} is attained if and only if that in \eqref{eq:hlsdual} is attained, and that optimizers are in one-to-one correspondence. Moreover, convergence of an optimizing sequence for one inequality is equivalent to that for the other and, in particular, relative compactness (up to symmetries) for one inequality implies the same for the other. This explains what we mean by the `equivalence' of the two optimization problems.

\begin{proof}
	The lemma is valid both when the underlying field is that of real and complex numbers. So, while in the rest of these lectures we deal exclusively with real-valued functions, here we will use complex notation.
	
	The proof of (a1) and (b1) is a variation of the proof of the first part of (a2) and (b2), so we only prove the latter. For (a2) we have, clearly, $\|g_n\|_{q'}=1$ and $\|A^*g_n\|_{\mathcal H} \leq \alpha \|g_n\|_{q'} = \alpha$. Meanwhile, since $\|f_n\|_{\mathcal H}=1$,
	$$
	\| A^* g_n\|_{\mathcal H} \geq \langle f_n, A^* g_n\rangle_{\mathcal H} = \int_X \overline{(Af_n)}\, g_n \,dx = \|Af_n\|_q = \alpha + o(1) \,.
	$$
	Thus, $\|A^* g_n\|_{\mathcal H}\to \alpha$, as claimed. Now assume that $g_n\to g$ in $L^{q'}(X)$. Then $A^*g_n\to A^*g$ in $\mathcal H$ and, passing to the limit in the above chain of inequalities $\alpha \geq \|A^* g_n\|_{\mathcal H} \geq \langle f_n, A^* g_n\rangle  = \alpha + o(1)$, we see that any weak limit point $f$ of $(f_n)$ satisfies $\| A^* g \|_{\mathcal H} = \langle f, A^* g\rangle$. Since $\|f\|_{\mathcal H}\leq 1$, we see that we have equality in the Schwarz inequality and consequently $f=\|A^* g\|_{\mathcal H}^{-1} A^* g$. In particular, $\|f\|_{\mathcal H} = 1 = \|f_n\|_{\mathcal H}$, which implies that the convergence to $f$ is strong. Uniqueness of the limit point proves that in fact the full sequence converges to $f$. This completes the proof of (a2).
	
	The proof of (b2) is similar to that of (a2). The assumption $1<q<\infty$ implies that we still have weak compactness; see \cite[Theorem 2.18]{LiLo} or \cite[Proposition 4.49]{vN}. We also make use of the characterization of equality in H\"older's inequality \cite[Theorem 2.3]{LiLo}. We omit the details.	
\end{proof}


\subsection*{Bibliographic remarks}

The existence of an optimizer for \eqref{eq:l1quotient} for general $s$ is due to Lieb \cite{Li} in the dual formulation of an HLS inequality. Lieb's proof uses the technique of symmetric decreasing rearrangement. Even if this argument does not yield the relative compactness of general optimizing sequences, several ingredients of it are still crucial for the latter problem.

The relative compactness of optimizing sequences is due to Lions; see \cite{Lio1} for the case $s=1$ and \cite{Lio2} for the case of general $s$ in the dual formulation.

The proof presented here is different from Lions's original one, although there are some similarities in the overall structure. In Lions's terminology, showing that there is something somewhere is excluding `vanishing' and showing that there is nothing else anywhere else is excluding `dichotomy'.

The first step in the proof of Theorem \ref{comp} that we presented is close to an argument that appears in \cite{KiVi} and has its roots in the work of G\'erard \cite{Ge}. Both \cite{KiVi} and \cite{Ge} iterate the argument of extracting a weak limit to obtain a so-called profile decomposition, which is of importance in several areas of analysis. As shown in \cite{FrLi} and here, to prove the relative compactness up to symmetries of optimizing sequences, a full profile decomposition is not necessary and it suffices to extract one profile. The refined Sobolev inequality in Proposition \ref{refined} is due to G\'erard, Meyer and Oru \cite{GeMeOr} and our presentation of the proof follows \cite[Theorem 1.43]{BaChDa}. For an alternative proof for $s=1$, which extends to the $p$-norm of the gradient, we refer to \cite{Le}.

Instead of the refined Sobolev inequality in terms of Besov spaces, one can also use an improvement of the Sobolev inequality in the scale of Lorentz spaces, namely,
$$
\|(-\Delta)^{s/2} u \|_2 \gtrsim \| u \|_{L^{q,2}} \,.
$$
Since $\|u\|_{L^q} = \| u \|_{L^{q,q}} \lesssim \| u \|_{L^{q,2}}^{2/q} \| u \|_{L^{q,\infty}}^{1-2/q}$, we obtain that, along a minimizing sequence, $\| u_n \|_{L^{q,\infty}}\gtrsim 1$. From this one can deduce the existence of a nontrivial weak limit point. Indeed, for $H^1(\R^d)$ this is a result of Lieb \cite{Li1}, but a similar proof works for $\dot H^1(\R^d)$, $d\geq 3$. For general $s$, see \cite{BeFrVi}. A Lorentz space improvement is also used in \cite{Li} for a similar, but slightly different purpose.

Yet another proof, based on a different kind of refined inequality, will be presented in the appendix to the next lecture.

The second step in the proof of Theorem \ref{comp} that we presented, including the Brezis--Lieb lemma and the use of the elementary inequality \eqref{eq:elem}, is taken from Lieb's proof of the existence of an optimizer \cite{Li}. The final argument, upgrading weak convergence to strong convergence, is attributed to Browder in \cite{BrNi}.

For a recent review of compactness methods similar to those employed in this lecture we refer to \cite{Sa}.


\section*{Lecture 2: Optimizers}

Our main goal in this lecture is to solve the optimization problem
\begin{equation}
	\label{eq:l2quotient}
	S_{d,s} := \inf_{0\neq u\in \dot H^s(\R^d)} \frac{\|(-\Delta)^{s/2}u\|_2^2}{\| u\|_q^2}
\end{equation}
where, as always in this series of lectures,
$$
0<s<\tfrac d2
\qquad\text{and}\qquad
q = \tfrac{2d}{d-2s} \,.
$$
By `solving the optimization problem' we mean that we will compute the number $S_{d,s}$ explicitly and characterize all $u\in\dot H^s(\R^d)$ for which the infimum is achieved. It is quite remarkable that this is possible. The following theorem is due to Lieb.

\begin{theorem}\label{charmin}
	Let $0<s<\frac d2$. Then	
	$$
	S_{d,s} = \frac{\Gamma(\frac d2+s)}{\Gamma(\frac d2-s)}\, |\Sph^d|^{2s/d} \,.
	$$
	Moreover, the infimum in \eqref{eq:l2quotient} is attained if and only if there are $a\in\R^d$, $\lambda>0$ and $c\in\R\setminus\{0\}$ such that
	\begin{equation}
		\label{eq:charmin}
		u(x) = c \lambda^{-(d-2s)/2} Q(\lambda^{-1}(x-a)) \,,
	\end{equation}
	where
	$$
	Q(x) = \left( \frac{2}{1+|x|^2} \right)^{(d-2s)/2}.
	$$
\end{theorem}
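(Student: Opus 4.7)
The plan is to follow Lieb's strategy: combine the existence result of Theorem \ref{comp} with rearrangement and the conformal invariance of the Sobolev inequality, working in the dual HLS formulation \eqref{eq:hlsdual} (switching between primal and dual via Lemma \ref{dual}). Theorem \ref{comp} already supplies an optimizer $g \in L^{q'}(\R^d)$ of \eqref{eq:hlsdual}. Rearrangement is much more transparent on the HLS side, because the Riesz rearrangement inequality applies directly to the double integral $\iint g(x)|x-y|^{-(d-2s)} g(y)\,dx\,dy$ and forces any optimizer to be nonnegative, radial and symmetric decreasing about some point.

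I would then transfer the problem to the sphere. Writing $\mathcal{S}^{-1}:\R^d \to \Sph^d\setminus\{N\}$ for inverse stereographic projection with conformal factor $J(x) = (2/(1+|x|^2))^d$, the map $(U_{\mathcal S} G)(x) = J(x)^{1/q'} G(\mathcal{S}^{-1}(x))$ is an isometry $L^{q'}(\Sph^d) \to L^{q'}(\R^d)$, and the identity $|\mathcal{S}^{-1}(x) - \mathcal{S}^{-1}(y)|^2 = |x-y|^2 (J(x)J(y))^{1/d}$ turns the dual HLS functional on $\R^d$ into one on $\Sph^d$ with kernel $|\omega - \omega'|^{-(d-2s)}$, now invariant under the full orthogonal group $O(d+1)$. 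Under this transfer the function $Q$ in the statement corresponds, up to a multiplicative constant, to the constant function on $\Sph^d$, so the theorem reduces to showing that, up to the action of the full conformal group of $\Sph^d$, the only nonnegative optimizers of the spherical inequality are constants.

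The crux is this rigidity on $\Sph^d$. I would prove it by a competing-symmetries argument in the style of Carlen--Loss: combine the $\R^d$-rearrangement (which on $\Sph^d$ corresponds to a symmetrization fixing the south pole) with a carefully chosen fixed rotation of $\Sph^d$ that moves the south pole. Neither operation decreases the HLS functional; the orbit of any nonnegative $g$ under their alternating composition is relatively compact in $L^{q'}(\Sph^d)$ (by Theorem \ref{comp} transferred through $U_{\mathcal S}$); and every accumulation point must be a simultaneous fixed point of both maps, which one verifies can only be a constant. Pulling back through $U_{\mathcal S}$, composed with the non-compact part of the conformal group of $\Sph^d$ (i.e.\ translations and dilations on $\R^d$), then produces precisely the family \eqref{eq:charmin}. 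The explicit value of $S_{d,s}$ follows by evaluating the Sobolev quotient on $Q$: after using \eqref{eq:intkernelinvfraclap} and the spherical transfer, this reduces to integrating the kernel $|\omega-\omega'|^{-(d-2s)}$ against constants on $\Sph^d$, an integral that Beta/Gamma identities compute in closed form to yield the stated expression.

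The hardest step is the rigidity on $\Sph^d$: the rearrangement, the conformal transfer and the final numerical computation are largely algebraic, but excluding non-constant optimizers requires a genuine new input. The competing-symmetries route sketched above is the cleanest; equivalent alternatives are a spherical-harmonic analysis of the linearized Euler--Lagrange equation (showing that the Hessian at a constant is nonnegative with kernel spanned by the infinitesimal conformal generators) or a moving-plane classification of positive solutions of $(-\Delta)^s u = c\,u^{q-1}$ on $\R^d$ along the lines of Chen--Li--Ou, each carrying its own technical overhead.
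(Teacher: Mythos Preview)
Your proposal outlines a valid and well-known route to Theorem \ref{charmin}, but it is genuinely different from the paper's. You work on the dual HLS side and rely on rearrangement plus competing symmetries (Carlen--Loss) for the rigidity step; the paper stays on the primal Sobolev side on $\Sph^d$ and proves the stronger Theorem \ref{charlocmin}, classifying all \emph{local} minimizers, via a second-variation argument. Concretely, given any local minimizer $U_*$, the paper first applies a Brouwer-type lemma (Lemma \ref{brouwer}) to conformally normalize so that $\int_{\Sph^d} \omega\,|U_*|^q\,d\omega = 0$, then tests the Hessian operator \eqref{eq:secondders} on the functions $\omega_j U_*$ and sums over $j$; a spherical-harmonic identity (Lemma \ref{multbycoord}) collapses the sum and forces $U_*$ to be constant. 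This is close to what you list as the ``spherical-harmonic'' alternative, except that it is applied at an \emph{arbitrary} critical point rather than only at the constant. The trade-offs: your approach is classical and conceptually direct, but yields only the global-minimizer classification and depends on symmetrization, so it does not transfer to settings (such as the Heisenberg group) where rearrangement is unavailable; the paper's approach avoids rearrangement entirely, gives the stronger local statement, and dovetails with the Hessian computations needed for the stability theorem in Lecture 3. One small technical point: invoking Theorem \ref{comp} for the compactness of the competing-symmetries orbit is not quite how Carlen--Loss actually argue---their convergence comes from Helly-type monotone compactness of rearranged functions rather than from compactness of optimizing sequences---so that step of your sketch would need to be adjusted.
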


In fact, we will present the proof of a stronger result, which says that the optimization problem \eqref{eq:l2quotient} does not have any local minimizers except for those stated in the theorem. Here, a function $0\neq u_*\in\dot H^s(\R^d)$ is called a \emph{local minimizer} of \eqref{eq:l2quotient} if for all $\phi\in\dot H^s(\R^d)$
$$
\frac{d}{dt}\Big|_{t=0}  \frac{\|(-\Delta)^{s/2} (u_*+t\phi)\|_2^2}{\|u_*+t\phi\|_q^2} = 0
\qquad\text{and}\qquad
\frac{d^2}{dt^2}\Big|_{t=0}  \frac{\|(-\Delta)^{s/2} (u_*+t\phi)\|_2^2}{\|u_*+t\phi\|_q^2} \geq 0 \,.
$$

\begin{theorem}\label{charlocmin}
	A function $0\neq u_*\in\dot H^s(\R^d)$ is a local minimizer of \eqref{eq:l2quotient} if and only if it is of the form \eqref{eq:charmin} for some $a\in\R^d$, $\lambda>0$ and $c\in\R\setminus\{0\}$.
\end{theorem}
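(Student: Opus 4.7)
Since Theorem~\ref{charmin} gives that every function of the form \eqref{eq:charmin} is a global (and hence local) minimizer, only the forward implication needs proof. I would proceed in three steps. \textbf{Step 1 (Euler--Lagrange).} Setting the first variation of $F(u) := \|(-\Delta)^{s/2}u\|_2^2/\|u\|_q^2$ equal to zero at $u_*$ yields the distributional equation
\[
(-\Delta)^s u_* = \mu\,|u_*|^{q-2} u_*
\qquad\text{with}\quad \mu := F(u_*)\,\|u_*\|_q^{2-q} > 0 .
\]
A rescaling $u_* \mapsto c u_*$ with $|c|^{q-2} = \mu$ reduces to $\mu = 1$, which I henceforth assume; pairing with $u_*$ then gives $\|(-\Delta)^{s/2} u_*\|_2^2 = \|u_*\|_q^q$.

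\textbf{Step 2 (positivity).} A direct second-variation computation, using only $F'(u_*) = 0$, translates the local-minimum hypothesis into the inequality
\[
\|(-\Delta)^{s/2}\phi\|_2^2 \ge (q-1)\int_{\R^d} |u_*|^{q-2}\phi^2\,dx - (q-2)\|u_*\|_q^{-q} \left(\int_{\R^d} |u_*|^{q-2}u_*\phi\,dx\right)^{2}
\]
for every $\phi \in \dot H^s(\R^d)$. To rule out sign-changing $u_*$, I would test with $\phi = u_*^-$: the EL equation gives $\int|u_*|^{q-2}(u_*^-)^2\,dx = \|u_*^-\|_q^q$ and $\int|u_*|^{q-2}u_*\,u_*^-\,dx = -\|u_*^-\|_q^q$, while a fractional Kato-type bound controls $\|(-\Delta)^{s/2}u_*^-\|_2^2$ by $\|u_*^-\|_q^q$. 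Substituting these should force $\|u_*^-\|_q = 0$. After flipping sign if needed one has $u_* \ge 0$, and the strong maximum principle for $(-\Delta)^s$ upgrades this to $u_* > 0$ on $\R^d$.

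\textbf{Step 3 (classification of positive solutions).} Once $u_* > 0$, the task reduces to classifying positive $\dot H^s$-solutions of the critical conformal equation $(-\Delta)^s u = u^{q-1}$. For $s = 1$ this is the classical theorem of Gidas--Ni--Nirenberg and Caffarelli--Gidas--Spruck; for general $0 < s < d/2$ it is due to Chen--Li--Ou, via the method of moving planes in integral form. In both cases the solution must coincide with \eqref{eq:charmin} up to translation and dilation, completing the proof. \textbf{Main obstacle.} The substantive ingredient is Step 3, a stand-alone theorem whose fractional version forces the integral-equation incarnation of moving planes; the classical pointwise reflection argument is unavailable. Step 2 is more delicate than for global minimizers, since one cannot replace $u_*$ by $|u_*|$ via rearrangement (the latter need not lie near $u_*$), and the second-variation inequality is essentially the only tool. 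An attractive alternative to Steps 2--3, in the conformal spirit of these lectures, is to transfer the problem to $\Sph^d$ via stereographic projection, where the quotient becomes a ratio for a conformal operator with spectrum $\Gamma(\ell+d/2+s)/\Gamma(\ell+d/2-s)$ on spherical harmonics of degree $\ell$, and the Hessian at the constant function is nonnegative with degeneracy exactly in the $\ell = 1$ (conformal) directions.
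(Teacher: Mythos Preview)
Your outline is viable for $0<s\le 1$, but Step~2 breaks down for $s>1$. The test function $\phi=u_*^-$ is admissible in the second-variation inequality only if $u_*^-\in\dot H^s(\R^d)$; for $s>1$ truncation at zero produces a Lipschitz singularity along the nodal set, so $u_*^-$ generically fails to lie in $\dot H^s$ and cannot be used. The ``fractional Kato-type bound'' you invoke rests on the Gagliardo double-integral representation of $\|(-\Delta)^{s/2}\cdot\|_2^2$, available only for $0<s<1$ (for $s=1$ one has the exact splitting $\|\nabla u_*\|_2^2=\|\nabla u_*^+\|_2^2+\|\nabla u_*^-\|_2^2$ instead). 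Thus in the range $1<s<d/2$ you have no mechanism to exclude sign-changing local minimizers, and the appeal to Chen--Li--Ou, which classifies only \emph{positive} solutions, is blocked.

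The paper avoids positivity altogether. Working on $\Sph^d$ as you hint in your closing remark, it adds two ingredients you do not mention. First (Lemma~\ref{brouwer}), a Brouwer fixed-point argument supplies a conformal map after which $\int_{\Sph^d}\omega\,|U_*|^q\,d\omega=0$; this orthogonality annihilates the rank-one piece of the Hessian when one tests with $\phi=\omega_j U_*$. Second (Lemma~\ref{multbycoord}), a spherical-harmonic recursion for $\sum_j\omega_j P_\ell\,\omega_j$ converts $\sum_{j=1}^{d+1}\mathcal E_s[\omega_jU_*]-(q-1)\mathcal E_s[U_*]$ into $-\sum_{\ell\ge 1}c_\ell\|P_\ell U_*\|_2^2$ with explicit $c_\ell>0$, forcing $U_*$ to be constant. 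No sign information on $U_*$ enters; the paper explicitly remarks that Theorem~\ref{charlocmin} for possibly sign-changing $u_*$ may be new, and it is precisely this feature that your Step~2 cannot deliver across the full range $0<s<d/2$.
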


The proof of this theorem that we present in this lecture relies on a `hidden' symmetry. This symmetry will be discussed next in detail.


\subsection*{Conformal invariance}

In the previous lecture we have already discussed the invariance of our optimization problem under translations and dilations. Another obvious invariance concerns that by orthogonal transformations of $\R^d$. There is a nonobvious invariance as well, namely under the \emph{inversion on the unit sphere} $x\mapsto x/|x|^2$, which is implemented on functions $u$ on $\R^d$ by
\begin{equation}
	\label{eq:inversion}
	\tilde u(x) := |x|^{-d+2s} u(|x|^{-2} x) \,.
\end{equation}
Clearly, $\tilde u$ belongs to $L^q(\R^d)$ if and only if $u$ does, and we have
\begin{equation}
	\label{eq:inversionq}
	\| \tilde u \|_q = \| u \|_q \,.
\end{equation}
The important observation is that $\tilde u$ belongs to $\dot H^s(\R^d)$ if and only if $u$ does, and that in this case
\begin{equation}
	\label{eq:inversionkin}
	\|(-\Delta)^{s/2} \tilde u\|_2 = \|(-\Delta)^{s/2} u \|_2 \,.
\end{equation}
For $s=1$, this can be proved directly by replacing $(-\Delta)^{1/2}$ under the norm by $\nabla$. For general $s$, in particular noninteger ones, a direct proof is more tedious and it is preferable to deduce this result from the discussion below. In the following we will not consider \eqref{eq:inversionkin} as proved, but rather use it as a motivation.

We recall a theorem of Liouville (see, e.g., \cite[Theorem A.3.7]{BePe}) that says that the Euclidean motions, together with dilations and the inversion on the unit sphere, generate the so-called \emph{conformal group}, that is, the group of deformations that preserve angles. Thus, if $\Phi:\R^d\cup\{\infty\}\to\R^d\cup\{\infty\}$ is conformal with Jacobian $J_\Phi:=|\det D\Phi|$ and if $u\in\dot H^s(\R^d)$, then
$$
u_\Phi(x) := J_\Phi(x)^{1/q} u(\Phi(x))
$$
belongs to $\dot H^s(\R^d)$ and
\begin{equation}
	\label{eq:confinvrd}
	\|(-\Delta)^{s/2} u_\Phi\|_2 = \|(-\Delta)^{s/2} u \|_2 \,,
	\qquad
	\| u_\Phi \|_q = \| u \|_q \,.
\end{equation}
We emphasize that, by Liouville's theorem, \eqref{eq:confinvrd} is a consequence of \eqref{eq:inversionq} and \eqref{eq:inversionkin}. Therefore, \eqref{eq:confinvrd} is not considered proved at this point of the lecture (unless for $s=1$). A proof will be provided later on.

We should stress that our lectures do not really rely on Liouville's theorem. If we call a \emph{M\"obius transformation} any element of the subgroup of the conformal group generated by Euclidean motions, dilations and the inversion on the unit sphere, then everything we say remains valid when we substitute `conformal' by `M\"obius' and, in the setting of the sphere that will appear momentarily, `conformal' by `conjugate of M\"obius under stereographic projection'. We have opted for the use of `conformal' for the sake of simplicity of the terminology and adherence to tradition in this field.

The conformal invariance allows us to reformulate the variational problem on the unit sphere $\Sph^d$ in $\R^{d+1}$. The inverse stereographic projection $\mathcal S:\R^d\to\Sph^d$ is given by
$$
\mathcal S_j(x):= \frac{2x_j}{1+|x|^2} \,,\ j=1,\ldots,d \,,
\qquad
\mathcal S_{d+1}(x) := \frac{1-|x|^2}{1+|x|^2} \,.
$$
This map is conformal and has Jacobian
$$
J_\mathcal S(x) = \left( \frac{2}{1+|x|^2} \right)^d \,.
$$
Sometimes we will extend $\mathcal S$ by $\mathcal S(\infty):=(0,\ldots,0,-1)^{\rm T}$ to a map $\R^d\cup\{\infty\}\to\Sph^d$.

Assume that a function $u$ on $\R^d$ and a function $U$ on $\Sph^d$ are related via
\begin{equation}
	\label{eq:uU}
	u(x)= J_\mathcal S(x)^{1/q}\, U(\mathcal S(x)) \,.
\end{equation}
Then clearly $U\in L^q(\Sph^d)$ if and only if $u\in L^q(\R^d)$, and
\begin{equation}
	\label{eq:uUlq}
	\| U \|_q = \| u \|_q \,.
\end{equation}
Here the $L^q$-norm on $\Sph^d$ is defined with respect to the (unnormalized) surface measure; moreover, integration with respect to this measure is denoted by $d\omega$.

The crucial point is that $u\in\dot H^s(\R^d)$ if and only if $U\in H^s(\Sph^d)$, and that in this case
\begin{equation}
	\label{eq:uUkin}
	\mathcal E_s[U] = \|(-\Delta)^{s/2} u \|_2^2
\end{equation}
for a certain energy functional $\mathcal E_s$ that we are about to introduce.

We recall that the space $L^2(\Sph^d)$ is the orthogonal direct sum of subspaces of spherical harmonics; see, e.g., \cite[Section IV.2]{StWe} or \cite[Subsection 3.8.2]{FrLaWe}. For $\ell\in\N_0$ we denote by $P_\ell$ the orthogonal projection in $L^2(\R^d)$ onto the subspace of spherical harmonics of degree $\ell$. We define for $U\in H^s(\Sph^d)$,
$$
\mathcal E_s[U] := \sum_{\ell=0}^\infty \frac{\Gamma(\ell + \frac{d}{2} + s)}{\Gamma(\ell + \frac{d}{2} - s)}\, \| P_\ell U \|_2^2 \,.
$$
Since the quotient of gamma functions in this definition is positive and grows like $\ell^{2s}$ (by Stirling's formula), we see that $\mathcal E_s[U]$ is equivalent to $\|U\|_{H^s(\Sph^d)}^2$. Moreover, for $s=1$ we see that
\begin{equation}
	\label{eq:e1}
	\mathcal E_1[U] = \int_{\Sph^d} \left( |\nabla U|^2 + \tfrac{d(d-2)}{4} U^2 \right)d\omega \,,
\end{equation}
where $\nabla$ denotes the gradient in the sense of Riemannian geometry. Identity \eqref{eq:e1} follows from the functional equation of the gamma function and the fact that $(-\Delta)P_\ell = \ell(\ell+d-1)P_\ell$, where $-\Delta$ is the Laplace--Beltrami operator.

Identity \eqref{eq:uUkin} for $s=1$ (with the left side replaced by the right side of \eqref{eq:e1}) follows by a straightforward computation. As a preparation for the proof for general $s$ we introduce the operator
\begin{equation}
	\label{eq:defas}
	A_s := \sum_{\ell=0}^\infty \frac{\Gamma(\ell + \frac{d}{2} + s)}{\Gamma(\ell + \frac{d}{2} - s)}\, P_\ell \,.
\end{equation}
This is an unbounded, selfadjoint operator in $L^2(\Sph^d)$, which is positive definite and has form domain $H^s(\Sph^d)$ and operator domain $H^{2s}(\Sph^d)$. The operator $A_s$ is connected with the quadratic form $\mathcal E_s$ by
$$
\langle U, A_s U \rangle = \mathcal E_s[U] \,,
$$
valid for $U\in H^{2s}(\Sph^d)$ (or even for $U\in H^s(\Sph^d)$, provided one interprets the left side as the duality pairing between $H^s$ and $H^{-s}$). Note also that
$$
A_1 = -\Delta + \tfrac{d(d-2)}{4} \,.
$$
This operator is called the \emph{conformal Laplacian}. The operator $A_2$ is called the \emph{Paneitz operator}. The family of operators $A_s$ with integer $s$ is referred to as the family of \emph{GJMS operators} on the sphere.

Having introduced the necessary objects, we can now show the claimed identity~\eqref{eq:uUkin}.

\begin{proof}[Proof of \eqref{eq:uUkin}]
	We denote by $T$ the operator $U\mapsto u$ and by $T^*$ its $L^2$-adjoint, that is,
	$$
	T U= J_\mathcal S^{1/q}\, (U\circ\mathcal S) \,,
	\qquad
	T^* u = J_{\mathcal S^{-1}}^{1/q'} \, (u\circ\mathcal S^{-1}) \,.
	$$
	Then \eqref{eq:uUkin} can be written as $A_s = T^* (-\Delta)^{s}\, T$, which is equivalent to
	\begin{equation}
		\label{eq:equalityinverses}
		A_s^{-1} = T^{-1} (-\Delta)^{-s}\, T^{-*} \,.
	\end{equation}
	Combining the form \eqref{eq:intkernelinvfraclap} of the integral kernel of $(-\Delta)^{-s}$ with the fact that
	$$
	|\mathcal S(x) - \mathcal S(x')|^2 = \frac{2}{1+|x|^2}\, |x-x'|^2 \, \frac{2}{1+|x'|^2} \,,
	$$
	we obtain
	$$
	T^{-1} (-\Delta)^{-s}\, T^{-*}(\omega,\omega') = 2^{-2s}\, \pi^{-d/2}\, \frac{\Gamma(\frac d2 -s)}{\Gamma(s)}\, |\omega-\omega'|^{-d+2s} \,.
	$$
	Since this kernel only depends on $\omega\cdot\omega'$, the latter operator is diagonal with respect to the decomposition of $L^2(\Sph^d)$ into spherical harmonics and, according to Lemma \ref{evsphere} below, its eigenvalue on the space of spherical harmonics of degree $\ell$ is equal to	
	$$
	\frac{\Gamma(\ell+\frac d2 -s)}{\Gamma(\ell+\frac d2 +s)} \,,
	$$
	which is the same as the eigenvalue of $A_s^{-1}$ on that space. This implies \eqref{eq:equalityinverses}.
\end{proof}

\begin{lemma}\label{evsphere}
	Let $0<\alpha<\frac d2$ and $\ell\in\N_0$. The eigenvalue of the operator in $L^2(\Sph^d)$ with kernel $(1-\omega\cdot\omega')^{-\alpha}$ on the subspace $\ran P_\ell$ is given by
	$$
	(4\pi)^{d/2}\, 2^{-\alpha} \, \frac{\Gamma(\frac d2-\alpha)}{\Gamma(\alpha)}\, \frac{\Gamma(\ell+\alpha)}{\Gamma(\ell+d-\alpha)} \,.
	$$
\end{lemma}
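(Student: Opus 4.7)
The plan is to apply the Funk--Hecke theorem on $\Sph^d$, which says that any operator with kernel depending only on $\omega\cdot\omega'$ acts as a scalar $\lambda_\ell$ on the spherical harmonics of degree $\ell$, explicitly
$$
\lambda_\ell = \frac{|\Sph^{d-1}|}{C_\ell^{(d-1)/2}(1)} \int_{-1}^1 (1-t)^{-\alpha}\, C_\ell^{(d-1)/2}(t)\, (1-t^2)^{(d-2)/2}\, dt,
$$
where $C_\ell^{(d-1)/2}$ is the Gegenbauer polynomial whose rotational lifts span $\ran P_\ell$. The standard values $|\Sph^{d-1}| = 2\pi^{d/2}/\Gamma(d/2)$ and $C_\ell^{(d-1)/2}(1) = \Gamma(\ell+d-1)/(\ell!\,\Gamma(d-1))$ reduce the problem to evaluating the integral $I_\ell$ on the right-hand side. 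Note that this step is conceptually essential: we cannot invoke the conformal equivalence with $(-\Delta)^{-s}$ on $\R^d$, since that identity is itself proven via the present lemma.

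To handle $I_\ell$, I would use the Rodrigues formula expressing $C_\ell^{(d-1)/2}(t)(1-t^2)^{(d-2)/2}$ as an explicit constant times $\frac{d^\ell}{dt^\ell}\bigl[(1-t^2)^{\ell+(d-2)/2}\bigr]$. Since $(1-t^2)^{\ell+(d-2)/2}$ and its first $\ell-1$ derivatives vanish at $t=\pm 1$, integrating by parts $\ell$ times transfers all derivatives onto $(1-t)^{-\alpha}$, producing the factor $\Gamma(\alpha+\ell)/\Gamma(\alpha) \cdot (1-t)^{-\alpha-\ell}$. After splitting $(1-t^2)^{\ell+(d-2)/2} = (1-t)^{\ell+(d-2)/2}(1+t)^{\ell+(d-2)/2}$, what remains is a Beta integral
$$
\int_{-1}^1 (1-t)^{d/2-\alpha-1}(1+t)^{\ell+d/2-1}\, dt = 2^{\ell+d-\alpha-1}\, \frac{\Gamma(\tfrac d2-\alpha)\,\Gamma(\ell+\tfrac d2)}{\Gamma(\ell+d-\alpha)},
$$
convergent precisely because $0<\alpha<d/2$.

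The final step is to assemble the factors and simplify. The gamma-function product appearing in the Rodrigues constant and in $|\Sph^{d-1}|/C_\ell^{(d-1)/2}(1)$ can be collapsed by two uses of the Legendre duplication formula $\Gamma(2z) = 2^{2z-1}\pi^{-1/2}\Gamma(z)\Gamma(z+\tfrac12)$, once with $z=(d-1)/2$ and once with $z=\ell+(d-1)/2$; the resulting powers of $2$ combine to $2^{d-\alpha}$ and the $\pi$-factors merge with $|\Sph^{d-1}|$ into $(4\pi)^{d/2}$, leaving the advertised formula. The main obstacle is purely bookkeeping: the Rodrigues constant, the factorials from the iterated integration by parts, the Beta value, and $|\Sph^{d-1}|/C_\ell^{(d-1)/2}(1)$ each contribute several gamma factors that must cancel exactly. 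A useful sanity check is the case $\ell=0$, where $C_0^{(d-1)/2}\equiv 1$ and $I_0$ collapses to a single Beta integral whose value should already match the proposed formula with $\ell=0$.
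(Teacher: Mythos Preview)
Your proposal is correct and follows essentially the same route the paper indicates: the Funk--Hecke formula together with standard facts about Gegenbauer polynomials. The paper itself defers the actual integral computation to \cite[Corollary 4.3]{FrLi0} and only explains how to recast the formula there---which involves $\frac{(-1)^\ell\,\Gamma(1-\alpha)}{\Gamma(-\ell+1-\alpha)}$ and a constant $\kappa_N$---into the stated form via the reflection and duplication formulas; your Rodrigues-formula-plus-integration-by-parts evaluation is a direct and self-contained way of obtaining the factor $\Gamma(\ell+\alpha)/\Gamma(\alpha)$ without passing through that intermediate expression.
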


\begin{proof}
	This is a computation based on the Funk--Hecke formula and facts about Gegenbauer polynomials. Its details can be found in \cite[Corollary 4.3]{FrLi0}. Here we only explain why formula \cite[(4.7)]{FrLi0} is the same as that in the lemma. First, using the duplication formula for the gamma function, we see that $\kappa_N = (4\pi)^{N/2}$ for all $N\geq 1$. Second, using the reflection formula for the gamma function twice, we see that
	$$
	\frac{(-1)^\ell\,\Gamma(1-\alpha)}{\Gamma(-\ell+1-\alpha)} = \frac{(-1)^\ell\,\sin\pi(\ell+\alpha)}{\sin\pi\alpha}\,\frac{\Gamma(\ell+\alpha)}{\Gamma(\alpha)} = \frac{\Gamma(\ell+\alpha)}{\Gamma(\alpha)} \,.
	$$
	This leads to the formula in the lemma.
\end{proof}

The identities \eqref{eq:uUlq} and \eqref{eq:uUkin} allow us to reformulate our optimization problem \eqref{eq:l2quotient} on Euclidean space as an optimization problem on the sphere,
\begin{equation}
	\label{eq:l2quotsphere}
	S_{d,s} = \inf_{0\neq U\in H^s(\Sph^d)} \frac{\mathcal E_s[U]}{\| U \|_q^2} \,.
\end{equation}
Moreover, optimizers for the problems on $\R^d$ and on $\Sph^d$ are in one-to-one correspondence via \eqref{eq:uU}.

At this point we can give the long delayed proof of the invariance of \eqref{eq:l2quotient} under inversions on the unit sphere.

\begin{proof}[Proof of \eqref{eq:inversionkin}]
	If $\tilde U$ is related to $\tilde u$ as in \eqref{eq:uU}, then
	$$
	\tilde U(\omega) = U(\omega_1,\ldots,\omega_d,-\omega_{d+1}) \,.
	$$
	That is, the inversion on the unit sphere for the $\R^d$-problem corresponds to the reflection on $\{\omega_{d+1}=0\}$ for the $\Sph^d$-problem. Since $\|P_\ell \tilde U\|_2 = \|P_\ell U\|_2$ we deduce that $\mathcal E_s[\tilde U]=\mathcal E_s[U]$. The claimed equality \eqref{eq:inversionkin} is therefore a consequence of \eqref{eq:uUkin}.
\end{proof}

At this point the conformal invariance of the problem on $\R^d$, namely \eqref{eq:confinvrd}, is completely proved. 

We also obtain the conformal invariance on $\Sph^d$. That is, if $\Psi:\Sph^d\to\Sph^d$ is a conformal transformation and if $U\in H^s(\Sph^d)$, then
$$
U_\Psi(\omega):= J_\Psi(\omega)^{1/q}\, U(\Psi(\omega))
$$
belongs to $H^s(\Sph^d)$ and
$$
\mathcal E_s[U_\Psi] = \mathcal E_s[U] \,,
\qquad
\| U_\Psi \|_q = \| U \|_q \,.
$$
This follows from the corresponding result on $\R^d$ by noting that $\Psi$ is a conformal transformation of $\Sph^d$ if and only if $\Phi:=\mathcal S^{-1} \Psi \mathcal S$ is a conformal transformation of $\R^d$.

\begin{remark*}
	We emphasize that for $s=1$ the argument given above is unnecessarily complicated. In this case we first verify directly the invariance under inversions \eqref{eq:inversionkin} and deduce the conformal invariance \eqref{eq:confinvrd} on $\R^d$. Then we verify directly identity \eqref{eq:uUkin} with left side given by the right side of \eqref{eq:e1}, and obtain as a consequence of the conformal invariance on $\R^d$ that on $\Sph^d$. In particular, Lemma \ref{evsphere} is not needed.
\end{remark*}

\begin{example*}
	Let $Q$ be as in Theorem \ref{charmin} and let $u(x) = c \lambda^{-(d-2s)/2} Q(\lambda^{-1}(x-a))$ with $a\in\R^d$, $\lambda>0$ and $c\in\R$. Then the corresponding $U$ on $\Sph^d$ is given by
	$$
	U(\omega) = c \left( \frac{\sqrt{1-|\zeta|^2}}{1-\zeta\cdot\omega} \right)^{(d-2s)/2}
	$$
	with $\zeta:=(2\eta - \lambda^2(1 + \eta_{n+1})e_{n+1})/(2 + \lambda^2(1 + \eta_{n+1}))$ and $\eta:=\mathcal S(a)$. This follows by a direct computation. Moreover, it is a simple exercise to show that the map $\R^d\times\R_+ \ni (a,\lambda)\mapsto \zeta \in \{z\in\R^{d+1}:\ |z|<1\}$ is a bijection. 
\end{example*}

As a final preliminary we note that
\begin{equation}
	\label{eq:detsphere}
	\left\{ J_\Psi :\ \Psi \ \text{conformal transformation of $\Sph^d$} \right\}
	= \left\{\left( \frac{\sqrt{1-|\zeta|^2}}{1-\zeta\cdot\omega} \right)^d :\ |\zeta|<1 \right\}.
\end{equation}
To prove this, we note that $\Phi=\mathcal S^{-1}\Psi\mathcal S$ gives a bijection between conformal transformations $\Psi$ of $\Sph^d$ and $\Phi$ of $\R^d$. Therefore, the claim is that
$$
J_{\mathcal S}(\Phi(\mathcal S^{-1}(\omega)))\, J_\Phi(\mathcal S^{-1}(\omega))\, J_{\mathcal S^{-1}}(\omega) = \left( \frac{\sqrt{1-|\zeta|^2}}{1-\zeta\cdot\omega} \right)^d,
$$
in the sense that, as $\Phi$ runs through the conformal group, $\zeta$ runs through the unit ball. Equivalently,
$$
J_{\mathcal S}(\Phi(x))\, J_\Phi(x)\, = \left( \frac{\sqrt{1-|\zeta|^2}}{1-\zeta\cdot\mathcal S(x)} \right)^d J_{\mathcal S}(x) \,.
$$
By Liouville's theorem it suffices to verify the latter identity separately for Euclidean motions, dilations and the inversion on the unit sphere. This is a tedious, but straightforward computation.


\subsection*{An equivalent formulation of the theorem}

After all these preparations, we now formulate the analogue of Theorem \ref{charlocmin} on the sphere. A function $0\neq U_*\in H^s(\Sph^d)$ is called a \emph{local minimizer} of \eqref{eq:l2quotsphere} if for all $\phi\in H^s(\Sph^d)$,
\begin{equation}
	\label{eq:locminsphere}
	\frac{d}{dt}\Big|_{t=0} \frac{\mathcal E_s[U_*+t\phi]}{\| U_*+t\phi\|_q^2} = 0
	\qquad\text{and}\qquad
	\frac{d^2}{dt^2}\Big|_{t=0} \frac{\mathcal E_s[U_*+t\phi]}{\| U_*+t\phi\|_q^2} \geq 0 \,.
\end{equation}
The conformal invariance discussed above shows that if $u$ and $U$ are related by \eqref{eq:uU}, then $U$ is a local minimizer of \eqref{eq:l2quotsphere} if and only if $u$ is a local minimizer of \eqref{eq:l2quotient}.

\begin{theorem}\label{charlocminsphere}
	A function $0\neq U_*\in H^s(\Sph^d)$ is a local minimizer of  \eqref{eq:l2quotsphere} if and only if $U_* = cJ_\Psi^{1/q}$ for a conformal transformation $\Psi$ of $\Sph^d$ and a constant $c\in\R\setminus\{0\}$.
\end{theorem}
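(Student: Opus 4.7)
The plan is to exploit the conformal invariance established above.

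\textbf{Easy direction.} If $U_* = cJ_\Psi^{1/q}$, then by the conformal invariance of $\mathcal E_s$ and $\|\cdot\|_q$, it suffices (after pre-composing perturbations with $\Psi^{-1}$) to check that $U_* \equiv c$ is a local minimizer. At $U_*\equiv 1$ the Euler--Lagrange equation $A_s \cdot 1 = K$ with $K := \Gamma(\tfrac{d}{2}+s)/\Gamma(\tfrac{d}{2}-s)$ is trivially satisfied, and a direct computation shows that the second variation of $Q[U]:=\mathcal E_s[U]/\|U\|_q^2$ in direction $\phi$ is proportional to $\mathcal E_s[\phi^\perp] - (q-1)K\,\|\phi^\perp\|_2^2$, where $\phi^\perp$ is the mean-zero part of $\phi$. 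By the spectral decomposition $A_s=\sum_\ell\mu_\ell P_\ell$ with $\mu_\ell=\Gamma(\ell+\tfrac{d}{2}+s)/\Gamma(\ell+\tfrac{d}{2}-s)$ strictly increasing in $\ell$, and the sharp-exponent identity $\mu_1=(q-1)K$ (equivalent to $(d+2s)/(d-2s)=q-1$), this quantity is nonnegative, with equality precisely on first-order spherical harmonics---exactly the tangent space at $1$ of the conformal orbit of the constants.

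\textbf{Hard direction.} Let $U_*$ be a local minimizer. After reducing (via a fractional rearrangement/symmetrization argument) to $U_*>0$ and normalizing scaling so that $A_s U_* = U_*^{q-1}$, the key step is a Hersch-type conformal normalization: by a topological degree argument applied to the map $\zeta\mapsto\int_{\Sph^d}\omega\,V_\zeta^q\,d\omega$, where $V_\zeta$ ranges over the conformal orbit of $U_*$ parameterized by the open unit ball in $\R^{d+1}$ via \eqref{eq:detsphere}, one produces a conformal $\Psi_0$ such that $V:=J_{\Psi_0}^{1/q}(U_*\circ\Psi_0)$ satisfies the centering condition $\int_{\Sph^d}\omega_j V^q\,d\omega=0$ for $j=1,\ldots,d+1$. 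Since the local-minimizer property is conformally invariant, $V$ remains a positive local minimizer, and it suffices to show $V$ must be constant.

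To achieve this, one tests the second-variation inequality at $V$ against test functions arising from the infinitesimal conformal variations, of the form $\phi_\xi:=\mathcal{L}_\xi V + \tfrac{1}{q}(\Div_{\Sph^d}\xi)V$ for conformal Killing fields $\xi$ on $\Sph^d$. The role of the centering condition is precisely to eliminate the rank-one denominator correction in the Hessian so that the conformal-direction constraints become pure spectral statements on $A_s$ and multiplication operators. Combining the resulting centered second-variation inequality with the bottom-eigenvalue bound $\mathcal E_s[V]\geq K\|V\|_2^2$ (with equality only on constants) and with the Euler--Lagrange equation, one forces $V$ to realize the bottom of the spectrum of $A_s$, whence $V$ is constant. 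Pulling back via $\Psi_0^{-1}$ then gives $U_*=cJ_{\Psi_0^{-1}}^{1/q}$.

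\textbf{Main obstacle.} The principal challenge is the final spectral computation, where one must explicitly expand the second-variation quadratic form at the centered $V$ in terms of $A_s$ and suitable multiplication operators, apply the centering condition, and match the resulting inequality against the spectral gap of $A_s$ above its bottom eigenvalue $K$. For $s=1$ this is essentially the content of Obata's theorem for conformally flat manifolds and follows from the Leibniz rule for the gradient together with Killing's equation. For general $s$ it requires nontrivial gamma-function identities for the eigenvalues $\mu_\ell$ that encode the conformal covariance of $A_s$, together with the Clebsch--Gordan decomposition of coordinate multiplication acting on spherical harmonics; the sharp exponent $q=2d/(d-2s)$ enters in an essential way through the identity $\mu_1=(q-1)K$. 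A secondary technical point is the justification of the reduction to $U_*\geq 0$ for noninteger $s$.
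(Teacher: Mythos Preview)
Your easy direction is essentially the paper's, and the Hersch-type centering step is correct. But the hard direction has two genuine gaps.

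First, the reduction to $U_*>0$ via rearrangement is both unnecessary and unjustified. Rearrangement/symmetrization lowers the quotient globally; it does not obviously map \emph{local} minimizers to local minimizers, and for noninteger $s$ you flag the difficulty yourself. The paper's argument never uses positivity of $U_*$; in fact the whole point of Theorem~\ref{charlocmin} is that it applies to sign-changing local minimizers.

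Second, and more seriously, your choice of test functions cannot work. The perturbations $\phi_\xi=\mathcal L_\xi V+\tfrac1q(\Div\xi)V$ for conformal Killing $\xi$ are precisely the infinitesimal conformal variations of $V$; by the conformal invariance of the quotient they lie in the kernel of the Hessian, so testing against them yields $0\ge 0$ and no information. The paper tests instead against $\phi=\omega_j U_*$, $j=1,\ldots,d+1$, which are \emph{not} conformal variations of $U_*$ unless $U_*$ is already constant. The centering kills the rank-one term, and summing over $j$ gives $\sum_j\mathcal E_s[\omega_j U_*]-(q-1)\mathcal E_s[U_*]\ge 0$. The Clebsch--Gordan identity (Lemma~\ref{multbycoord}) then rewrites the left side as $-\sum_{\ell\ge 0}\frac{\Gamma(\ell+\frac d2+s)}{\Gamma(\ell+\frac d2-s)}\,w_s(\ell)\,\|P_\ell U_*\|_2^2$ with $w_s(\ell)>0$ for $\ell\ge 1$, forcing $P_\ell U_*=0$ for all $\ell\ge 1$. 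Your proposed route through the bottom-eigenvalue bound $\mathcal E_s[V]\ge K\|V\|_2^2$ does not produce this conclusion: that bound is just $\mu_0=K$, and combining it with the Euler--Lagrange equation gives only $\|V\|_q^q\ge K\|V\|_2^2$ after integration, which does not force $V$ to be constant. (Your reference to Obata is also off: Obata's argument is an Euler--Lagrange/Bochner computation, not a second-variation test, and it is a different proof from the one here.)
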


Theorem \ref{charlocmin} is an immediate consequence of Theorem \ref{charlocminsphere}. The equivalence of the characterization of optimizers follows from the above example and \eqref{eq:detsphere}.

\medskip

To prepare for the proof of Theorem \ref{charlocminsphere}, we compute the derivatives appearing in the definition of a local minimizer. We begin with the case $s=1$, where we use the form \eqref{eq:e1} of the functional. We have
$$
\frac{d}{dt}\Big|_{t=0} \frac{\mathcal E_1[U_*+t\phi]}{\| U_*+t\phi\|_q^2}
= \frac{2}{\| U_*\|_q^2} \int_{\Sph^d} \left( \nabla\phi\cdot\nabla U_* + \tfrac{d(d-2)}{4}\phi U_* - \tfrac{\mathcal E_1[U_*]}{\|U_*\|_q^q} \phi |U_*|^{q-2} U_*\right)d\omega\,,
$$
so the first condition in \eqref{eq:locminsphere} is satisfied if and only if $U_*$ solves the equation
\begin{equation}
	\label{eq:firstder}
	-\Delta U_* +  \tfrac{d(d-2)}{4} U_* - \tfrac{\mathcal E_1[U_*]}{\|U_*\|_q^q} |U_*|^{q-2} U_* = 0
	\qquad\text{on}\ \Sph^d \,.
\end{equation}
Moreover, for $U_*$ for which the first derivative of $\mathcal E_1$ vanishes, we compute
\begin{align*}
	\frac{d^2}{dt^2}\Big|_{t=0} \frac{\mathcal E_1[U_*+t\phi]}{\| U_*+t\phi\|_q^2}
	= \frac{2}{\| U_*\|_q^2} & \left( \int_{\Sph^d} \left( |\nabla\phi|^2 + \tfrac{d(d-2)}{4} \phi^2 - (q-1) \tfrac{\mathcal E_1[U_*]}{\|U_*\|_q^q} |U_*|^{q-2} \phi^2 \right)d\omega \right. \\
	& \quad \left. + (q-2) \tfrac{\mathcal E_1[U_*]}{\|U_*\|_q^{2q}} \left( \int_{\Sph^d} |U_*|^{q-2} U_*\phi\,d\omega \right)^2 \right).
\end{align*}
Thus, the second condition in \eqref{eq:locminsphere} is satisfied if and only if the operator
\begin{equation}
	\label{eq:secondder}
	-\Delta + \tfrac{d(d-2)}{4} - (q-1) \tfrac{\mathcal E_1[U_*]}{\|U_*\|_q^q} |U_*|^{q-2} + (q-2)\, \tfrac{\mathcal E_1[U_*]}{\|U_*\|_q^{2q}} \left| |U_*|^{q-2} U_* \right\rangle \left\langle |U_*|^{q-2} U_* \right|
\end{equation}
in $L^2(\Sph^d)$ is positive semidefinite. (Here $|f\rangle\langle f|$ denotes the rank one operator $\phi\mapsto \langle f,\phi\rangle f$.) This operator is considered as an unbounded, selfadjoint operator in $L^2(\Sph^d)$. It is bounded from below and has form domain $H^1(\Sph^d)$.

The computation for general $s$ is similar. We recall that the operator $A_s$ was introduced in \eqref{eq:defas}. We see that in terms of this operator the first condition in \eqref{eq:locminsphere} is equivalent to the equation
\begin{equation}
	\label{eq:firstders}
	A_s U_* - \tfrac{\mathcal E_s[U_*]}{\|U_*\|_q^q}\, |U_*|^{q-2} U_* = 0
	\qquad\text{on}\ \Sph^d
\end{equation}
and, for $U_*$ satisfying \eqref{eq:firstders}, the second condition in \eqref{eq:locminsphere} is equivalent to the positive semidefiniteness of the operator
\begin{equation}
	\label{eq:secondders}
	A_s - (q-1)\, \tfrac{\mathcal E_s[U_*]}{\|U_*\|_q^q}\, |U_*|^{q-2} + (q-2)\, \tfrac{\mathcal E_s[U_*]}{\|U_*\|_q^{2q}} \left| |U_*|^{q-2} U_* \right\rangle \left\langle |U_*|^{q-2} U_* \right|.
\end{equation}


\subsection*{Local minimality of constants}

We turn to the proof of the first part of Theorem~\ref{charlocminsphere}. Let $\Psi$ be a conformal transformation of $\Sph^d$, $c\in\R\setminus\{0\}$ and $U_* = c J_\Psi^{1/q}$. We wish to show that $U_*$ is a local minimizer of \eqref{eq:l2quotsphere}. By homogeneity of the problem it suffices to consider $c=1$ and by conformal invariance it suffices to consider $\Psi={\rm id}_{\Sph^d}$.

\medskip

We begin with the case $s=1$, where we need to show that $U_*=1$ satisfies equation \eqref{eq:firstder} and that the operator in \eqref{eq:secondder} is positive semidefinite. Verification of \eqref{eq:firstder} is straightforward. The operator in \eqref{eq:secondder} becomes
$$
\mathcal L_1 := - \Delta -d + d\, |\Sph^d|^{-1} \left| 1 \rangle \langle 1 \right| .
$$
We recall (see, e.g., \cite[Theorem 3.49]{FrLaWe}) that the spectrum of the Laplace--Beltrami operator $-\Delta$ in $L^2(\Sph^d)$ consists of the discrete eigenvalues $\ell(\ell+d-1)$, $\ell\in\N_0$, (of certain known multiplicities which, however, are irrelevant for us at this point). The lowest eigenvalue is $0$ and the corresponding eigenfunctions are precisely the constants. The operator $|\Sph^d|^{-1} \left| 1 \rangle \langle 1 \right|$ is the projection $P_0$ onto constants in $L^2(\Sph^d)$. Since this operator commutes with $-\Delta$, we can use the above facts to describe the spectrum of $\mathcal L_1$. It consists precisely of the eigenvalues $\ell(\ell+d-1) - d$, $\ell\in\N$. In particular, its spectrum is contained in $\overline{\R_+}$, and therefore the operator $\mathcal L_1$ is positive semidefinite, as we wanted to show.

For later purposes we recall that the eigenvalue $d$ of $-\Delta$ in $L^2(\Sph^d)$ has multiplicity $d+1$ and a basis of corresponding eigenfunctions is given by the coordinate functions $\omega_j$, $j=1,\ldots,d+1$. Therefore, the eigenvalue $0$ of $\mathcal L_1$ has multiplicity $d+2$ and a basis of eigenfunctions is given by constants and the coordinate functions. These $d+2$ zero modes of $\mathcal L_1$ reflect the invariances of the variational problem: the coordinate functions come from the translation and dilation invariance, while the constant function comes from the homogeneity of the problem. In this sense the constant function $1$ is a \emph{nondegenerate} local minimizer: the only zero modes come from the invariances.

\medskip

The argument for general $s$ is similar. Equation \eqref{eq:firstders} for $U_*=1$ follows immediately from
$$
\tfrac{\mathcal E_s[1]}{\|1\|_q^q} = \tfrac{\Gamma(\frac d2+s)}{\Gamma(\frac d2-s)} \,,
$$
which also shows that the operator in \eqref{eq:secondders} becomes
\begin{align}
	\label{eq:defls}
	\mathcal L_s & := A_s - (q-1)\, \tfrac{\Gamma(\frac d2+s)}{\Gamma(\frac d2-s)} + (q-2)\, \tfrac{\Gamma(\frac d2+s)}{\Gamma(\frac d2-s)}\, |\Sph^d|^{-1} \left| 1 \rangle \langle 1\right| \notag \\
	& = \sum_{\ell=2}^\infty \left( \tfrac{\Gamma(\ell + \frac{d}{2} + s)}{\Gamma(\ell + \frac{d}{2} - s)} - \tfrac{\Gamma(1 + \frac{d}{2} + s)}{\Gamma(1 + \frac{d}{2} - s)} \right) P_\ell \,.
\end{align}
The second equality here uses the functional equation of the gamma function. We emphasize that the terms with $\ell=0$ and $\ell=1$ vanish. Thus, spherical harmonics of degrees $0$ and $1$ are in the kernel of $\mathcal L_s$. Moreover, by the log-convexity of the gamma function, $\frac{d}{dt}\ln\frac{\Gamma(t+s)}{\Gamma(t-s)} = (\ln\Gamma)'(t+s) - (\ln\Gamma)'(t-s)> 0$ for all $t>s>0$, so
\begin{equation}\label{eq:gammamono}
	\ell \mapsto \tfrac{\Gamma(\ell+\frac d2 +s)}{\Gamma(\ell+\frac d2 -s)}
	\qquad\text{is increasing}. 
\end{equation}
It follows that $\mathcal L_s$ is positive definite on the orthogonal complement of the range of $P_0+P_1$. Thus, we have shown that $\mathcal L_s$ is positive semidefinite, as we wanted to show.

This completes the proof of the first part of Theorem \ref{charlocminsphere}.


\subsection*{Classification of local minimizers}

It remains to prove the second part of Theorem~\ref{charlocminsphere}. As a preparation for the proof we first establish the following lemma, which will allow us to fix the center of mass by a conformal transformation.

\begin{lemma}\label{brouwer}
	Let $f\in L^1(\Sph^d)$ with $\int_{\Sph^d} f(\omega)\,d\omega\neq 0$. Then there is a conformal transformation $\Psi$ of $\Sph^d$ such that
	$$
	\int_{\Sph^d} \Psi^{-1}(\omega) f(\omega)\,d\omega = 0 \,.
	$$
\end{lemma}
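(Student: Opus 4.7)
The strategy is a degree-theoretic argument based on Brouwer's no-retraction theorem, applied to a vector field built from $f$ via the parametrization of conformal transformations of $\Sph^d$ by the open unit ball $B:=\{\zeta\in\R^{d+1}:|\zeta|<1\}$ implicit in \eqref{eq:detsphere}. Without loss of generality assume $I:=\int_{\Sph^d} f\,d\omega>0$ (else replace $f$ by $-f$).

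First, for each $\zeta\in B$ choose, continuously in $\zeta$, a conformal transformation $\Psi_\zeta$ of $\Sph^d$ whose Jacobian equals $J_{\Psi_\zeta}(\omega)=\bigl(\sqrt{1-|\zeta|^2}/(1-\zeta\cdot\omega)\bigr)^d$ and with $\Psi_0=\mathrm{id}$. Define
$$
F:B\to\R^{d+1}, \qquad F(\zeta):=I^{-1}\int_{\Sph^d}\Psi_\zeta^{-1}(\omega)\,f(\omega)\,d\omega,
$$
where $\Psi_\zeta^{-1}(\omega)\in\Sph^d\subset\R^{d+1}$. Since $|\Psi_\zeta^{-1}(\omega)|=1$ and $f\in L^1$, dominated convergence shows $F$ is continuous on $B$. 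Finding a zero of $F$ is exactly what the lemma asks for.

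The key step is to show that $F$ extends continuously to $\overline B$ with $F|_{\partial B}=\mathrm{id}$. The change of variables $\omega'=\Psi_\zeta(\omega)$ identifies the probability measure $|\Sph^d|^{-1}J_{\Psi_\zeta}\,d\omega$ as the pushforward of the normalized surface measure by $\Psi_\zeta^{-1}$. From the explicit form of $J_{\Psi_\zeta}$ one reads off that this measure concentrates at $\zeta/|\zeta|$ as $|\zeta|\to 1$; consequently $\Psi_\zeta^{-1}(\omega)\to\zeta_*$ for every $\omega\in\Sph^d\setminus\{-\zeta_*\}$ as $\zeta\to\zeta_*\in\partial B$. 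Applying dominated convergence to the defining integral then gives $F(\zeta)\to\zeta_*$, yielding the claimed boundary values.

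With this in hand, the conclusion is standard: if $F$ had no zero in $B$ then, as $F\neq 0$ on $\partial B$ as well, the map $F/|F|:\overline B\to\Sph^d$ would be a continuous retraction onto $\partial B$ extending the identity, contradicting Brouwer's no-retraction theorem. Hence there exists $\zeta\in B$ with $F(\zeta)=0$, and $\Psi:=\Psi_\zeta$ satisfies $\int_{\Sph^d}\Psi^{-1}(\omega)f(\omega)\,d\omega=0$. The main obstacle is verifying the boundary behavior, which requires a short but explicit computation with the Möbius parametrization of the conformal group; once that is in place, the Brouwer step is routine.
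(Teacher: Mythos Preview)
Your approach is essentially the same as the paper's: parametrize the conformal group by the open unit ball, form the vector-valued map $F$, show it extends continuously to the closed ball with identity boundary values, and invoke Brouwer. The paper carries this out by writing down an explicit family $\gamma_{\delta,\xi}$ (dilations conjugated by stereographic projection) and checking directly from the formula that $\gamma_{\delta,\xi}(\omega)\to\xi$ uniformly on $\{1+\omega\cdot\xi\geq\epsilon\}$ as $\delta\to 0$; your version replaces this by a measure-concentration argument based on the Jacobian.

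One small point: from the concentration of $|\Sph^d|^{-1}J_{\Psi_\zeta}\,d\omega$ at $\zeta_*$ you only get $\Psi_\zeta^{-1}\to\zeta_*$ \emph{in measure}, not pointwise on $\Sph^d\setminus\{-\zeta_*\}$ (the location of the exceptional point even depends on which continuous section $\zeta\mapsto\Psi_\zeta$ you picked). That is enough, though, since $|\Psi_\zeta^{-1}|\equiv 1$ and $f\in L^1$ let you pass to the limit in $\int\Psi_\zeta^{-1}f$ by dominated convergence along subsequences; so the boundary behavior $F(\zeta)\to\zeta_*$ follows. Also, the existence of a continuous section $\zeta\mapsto\Psi_\zeta$ with $\Psi_0=\mathrm{id}$ is true but you assert it without proof; the paper's explicit $\gamma_{\delta,\xi}$ supplies exactly this and makes both steps a direct computation.
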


\begin{proof}
	\emph{Step 1.} In this preliminary step we define a family of conformal transformations $\gamma_{\delta,\xi}$ of $\Sph^d$ depending on two parameters $\delta>0$ and $\xi\in\Sph^d$. To do so, we denote dilations on $\R^d$ by $\mathcal D_\delta$, that is, $\mathcal D_\delta(x)=\delta x$. Moreover, for any $\xi\in\Sph^d$ we choose an orthogonal $(d+1)\times(d+1)$ matrix $O$ such that $O\xi=(0,\ldots,0,1)^{\rm T}$ and we put
	$$
	\gamma_{\delta,\xi}(\omega) := 
	\begin{cases}
	O^{\rm T} \mathcal S\left(\mathcal D_\delta\left(\mathcal S^{-1}\left(O\omega\right)\right)\right) & \text{if}\ \omega\neq -\xi \,,\\
	-\xi & \text{if}\ \omega = -\xi \,.
	\end{cases}
	$$
	This transformation depends only on $\xi$ and $\delta$ and not on the particular choice of $O$. Indeed, a straightforward computation shows that
	\begin{equation*}
		\gamma_{\delta,\xi}(\omega) = \frac{2\delta}{(1+\omega\cdot\xi)+\delta^2 (1-\omega\cdot\xi)} \ \left(\omega- (\omega\cdot\xi) \ \xi \right) 
		+ \frac{(1+\omega\cdot\xi)-\delta^2 (1-\omega\cdot\xi)}{(1+\omega\cdot\xi)+\delta^2 (1-\omega\cdot\xi)} \ \xi \,.
	\end{equation*}
	Since $\gamma_{\delta,\xi}$ is a composition of conformal transformations, it is conformal.
	
	\medskip

	\emph{Step 2.} We now turn to the main part of the proof, where we may assume that $f\in L^1(\Sph^d)$ is normalized by $\int_{\Sph^d} f(\omega) \,d\omega=1$. We will show that the $\R^{d+1}$-valued map
	$$
	F(r\xi) := \int_{\Sph^d} \gamma_{1-r,\xi}(\omega) f(\omega) \,d\omega\,,
	\qquad 0< r< 1\,,\ \xi\in\Sph^d \,, 
	$$
	has a zero. Once we have shown this, we deduce the assertion of the lemma by taking $\Psi = \gamma_{1-r_0,\xi_0}^{-1}$, where $r_0\xi_0$ is the zero of $F$.
	
	First, note that because of $\gamma_{1,\xi}(\omega)= \omega$ for all $\xi$ and all $\omega$, the limit of $F(r\xi)$ as $r\to 0$ is independent of $\xi$, so $F$ extends to a continuous function on the open unit ball of $\R^{d+1}$. In order to understand its boundary behavior, one easily checks that for any $\omega\neq-\xi$ one has $\lim_{\delta\to 0} \gamma_{\delta,\xi}(\omega)= \xi$, and that  this convergence is uniform on $\{(\omega,\xi) \in \Sph^d\times\Sph^d :\ 1+\omega\cdot\xi \geq \epsilon\}$ for any $\epsilon>0$. This implies that
	\begin{equation*}
		\lim_{r\to 1} F(r\xi) = \xi
		\qquad\text{uniformly in}\ \xi\,.
	\end{equation*}
	Hence, $F$ is a continuous function on the \emph{closed} unit ball and is the identity on the boundary. The assertion is now a consequence of one of the equivalent forms of Brouwer's fixed point theorem; see, e.g., \cite[Appendix]{Pe}.
\end{proof}

We now prove the second part of Theorem \ref{charlocminsphere}. Again we begin with the case $s=1$. Let $0\neq U_*\in H^1(\Sph^d)$ be a local minimizer of \eqref{eq:l2quotsphere}. We wish to show that there is a conformal transformation $\Psi$ of $\Sph^d$ and a constant $c\in\R\setminus\{0\}$ such that $U_* = c J_\Psi^{1/q}$.

According to Lemma \ref{brouwer} we can choose the conformal transformation $\Psi$ in such a way that
$$
0 = \int_{\Sph^d} \Psi^{-1}(\omega) |U_*(\omega)|^q\,d\omega = \int_{\Sph^d} \omega |(U_*)_\Psi(\omega)|^q\,d\omega \,.
$$
Note that by conformal invariance $(U_*)_\Psi$ is also a local minimizer. Thus, by replacing $U_*$ by $(U_*)_\Psi$, it suffices to show that if $U_*$ is a local minimizer satisfying
\begin{equation}
	\label{eq:com}
	\int_{\Sph^d} \omega |U_*|^q\,d\omega = 0 \,,
\end{equation}
then $U_*$ is a constant. To prove this, we make use of the positive semidefiniteness of the linear operator \eqref{eq:secondder}. Evaluating the operator on the function $\omega_j U_*$, $j=1,\ldots,d+1$, (that is, choosing $\phi=\omega_j U_*$ in the second condition in \eqref{eq:locminsphere}) we obtain
\begin{equation}
	\label{eq:proofunique1}
	\int_{\Sph^d} \left( |\nabla(\omega_j U_*)|^2 + \tfrac{d(d-2)}{4}\omega_j^2 U_*^2 - (q-1) \tfrac{\mathcal E_1[U_*]}{\|U_*\|_q^q} \omega_j^2 |U_*|^q \right)d\omega \geq 0 \,.
\end{equation}
Here we used \eqref{eq:com} to see that the rank-one term in the operator \eqref{eq:secondder} vanishes on the chosen function. We have
\begin{align*}
	|\nabla(\omega_j U_*)|^2 & = \omega_j^2 |\nabla U_*|^2 + 2 \omega_j U_* \nabla\omega_j\cdot\nabla U_* + U_*^2 |\nabla\omega_j|^2 \\
	& = \omega_j^2 |\nabla U_*|^2 + U_* \nabla(\omega_j^2) \cdot\nabla U_* + U_*^2 \left( 1- \omega_j^2\right).
\end{align*}
(Recall that $\nabla$ denotes the Riemannian gradient and that $\nabla\omega_j = e_j - \omega_j \omega$.) Inserting this into \eqref{eq:proofunique1}, we find
\begin{align*}
	& \int_{\Sph^d} \!\! \left( \omega_j^2 |\nabla U_*|^2 + U_* \nabla(\omega_j^2) \!\cdot\! \nabla U_* + U_*^2 \left( 1- \omega_j^2\right) + \tfrac{d(d-2)}{4}\omega_j^2 U_*^2 - (q-1) \tfrac{\mathcal E_1[U_*]}{\|U_*\|_q^q} \omega_j^2 |U_*|^q \right)d\omega \notag \\
	& \quad \geq 0 \,.
\end{align*}
Summing these inequalities with respect to $j=1,\ldots,d+1$ and using $\sum_j \omega_j^2 = 1$, we arrive at
\begin{align*}
	0 & \leq \int_{\Sph^d} \left( |\nabla U_*|^2 + d U_*^2 + \tfrac{d(d-2)}{4} U_*^2 - (q-1) \tfrac{\mathcal E_1[U_*]}{\|U_*\|_q^q} |U_*|^q \right)d\omega = -(q-2) \int_{\Sph^d} |\nabla U_*|^2\,d\omega \,.
\end{align*}
Note that the coefficients of $U_*^2$ cancel. Since $q>2$, we have shown $\int_{\Sph^d} |\nabla U_*|^2\,d\omega \leq 0$, which implies that $U_*$ is a constant, as claimed. This completes the proof of Theorem~\ref{charlocminsphere} for $s=1$.

\medskip

We now discuss the case of general $s$. As before we can use Lemma \ref{brouwer} to reduce the proof to showing that, if $U_*$ is a local minimizer satisfying \eqref{eq:com}, then $U_*$ is constant. Evaluating the operator \eqref{eq:secondders} on the function $\omega_j U_*$, $j=1,\ldots,d+1$, and recalling \eqref{eq:com}, we obtain
$$
\mathcal E_s[\omega_j U_*] - (q-1) \tfrac{\mathcal E_s[U_*]}{\|U_*\|_q^q} \int_{\Sph^d} \omega_j^2 |U_*|^q \,d\omega \geq 0 \,.
$$
Summing with respect to $j$ gives
\begin{equation}
	\label{eq:secondvarineq}
	\sum_{j=1}^{d+1} \mathcal E_s[\omega_j U_*] - (q-1) \mathcal E_s[U_*] \geq 0 \,.
\end{equation}
To simplify the first term on the left side, we need an auxiliary result about spherical harmonics.

\begin{lemma}\label{multbycoord}
	For all $\ell\geq 0$,
	$$
	\sum_{j=1}^{d+1} \omega_j P_\ell\, \omega_j = \tfrac{\ell+1}{2\ell+d+1}\, P_{\ell+1} + \tfrac{\ell+d-2}{2\ell+d-3}\,P_{\ell-1} \,,
	$$
	with the conventions that, if $\ell=0$, $\tfrac{\ell+d-2}{2\ell+d-3}\,P_{\ell-1}=0$ and that, if $d=1$ and $\ell=1$, $\frac{\ell+d-2}{2\ell+d-3}=\frac12$  
\end{lemma}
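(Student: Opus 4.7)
The operator $T_\ell := \sum_{j=1}^{d+1} \omega_j P_\ell \omega_j$ is self-adjoint and rotation-invariant: for any $R \in SO(d+1)$ with induced unitary $U_R$ on $L^2(\Sph^d)$ one has $U_R^{-1} \omega_j U_R = \sum_k R_{jk} \omega_k$, and the orthogonality $\sum_j R_{jk}R_{jl} = \delta_{kl}$ together with the rotation-invariance of $P_\ell$ collapses the conjugated sum back to $T_\ell$. Since each $\ran P_m$ is an irreducible real $SO(d+1)$-representation and $T_\ell$ is self-adjoint, Schur's lemma forces $T_\ell|_{\ran P_m} = c_m\,\mathrm{id}$ for a scalar $c_m$. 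Combined with the classical decomposition $x_j H_m = H_{m+1} + |x|^2 H_{m-1}'$ for harmonic polynomials $H_m$, which restricts to $\omega_j \ran P_m \subset \ran P_{m-1} \oplus \ran P_{m+1}$ on $\Sph^d$, this means $P_\ell \omega_j P_m = 0$ unless $m \in \{\ell-1, \ell+1\}$, so $T_\ell = c_{\ell+1} P_{\ell+1} + c_{\ell-1} P_{\ell-1}$ and the proof reduces to computing the two scalars.

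I would compute them by testing against the zonal spherical harmonics $Z_m := C_m^{(d-1)/2}(\omega_{d+1})$, which span the unique line in $\ran P_m$ fixed by the stabilizer $SO(d)$ of the pole $e_{d+1}$. Because $T_\ell$ commutes with this stabilizer, $T_\ell Z_m$ is zonal, hence a scalar multiple of $Z_m$, and the scalar is obtained by evaluating at $e_{d+1}$. At the pole only the term $j = d+1$ in $T_\ell$ survives (all other $\omega_j$ vanish), giving
$$
[T_\ell Z_m](e_{d+1}) = [P_\ell(\omega_{d+1} Z_m)](e_{d+1}).
$$
Applying the Gegenbauer three-term recurrence $t C_m^\lambda = \tfrac{m+1}{2(m+\lambda)} C_{m+1}^\lambda + \tfrac{m+2\lambda-1}{2(m+\lambda)} C_{m-1}^\lambda$ with $\lambda = (d-1)/2$ then yields $\omega_{d+1} Z_m = A_m Z_{m+1} + B_m Z_{m-1}$, where $A_m = (m+1)/(2m+d-1)$ and $B_m = (m+d-2)/(2m+d-1)$ and $Z_{-1}:=0$. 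Projection onto degree $\ell$ isolates exactly one surviving term in each case, namely $P_\ell(\omega_{d+1}Z_{\ell+1}) = B_{\ell+1}Z_\ell$ and $P_\ell(\omega_{d+1}Z_{\ell-1}) = A_{\ell-1}Z_\ell$.

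The remaining step is arithmetic: the standard evaluation $C_m^{(d-1)/2}(1) = \Gamma(m+d-1)/(m!\,\Gamma(d-1))$ gives the ratios $Z_\ell(e_{d+1})/Z_{\ell+1}(e_{d+1}) = (\ell+1)/(\ell+d-1)$ and $Z_\ell(e_{d+1})/Z_{\ell-1}(e_{d+1}) = (\ell+d-2)/\ell$, which combine with $B_{\ell+1}$ and $A_{\ell-1}$ so that the factors $\ell+d-1$ and $\ell$ cancel, leaving $c_{\ell+1} = (\ell+1)/(2\ell+d+1)$ and $c_{\ell-1} = (\ell+d-2)/(2\ell+d-3)$, as claimed. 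The main obstacle is purely notational --- tracking the Gegenbauer and gamma identities without a slip --- rather than analytical. The two degenerate situations flagged in the statement require a brief separate treatment: $\ell=0$ is handled by the convention $Z_{-1}:=0$, which automatically kills the $P_{-1}$ branch in the recurrence step; and $d=1$, where $\lambda=(d-1)/2 = 0$ makes the Gegenbauer normalization collapse, calls for the Chebyshev recurrence $tT_m = \tfrac{1}{2}(T_{m+1}+T_{m-1})$ (for $m\geq 1$) in place of the generic one, after which the same evaluation-at-the-pole argument produces the stated coefficients.
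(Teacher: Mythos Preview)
Your proof is correct. Both your argument and the paper's hinge on the Gegenbauer three-term recurrence $2(m+\lambda)\,tC_m^\lambda = (m+1)C_{m+1}^\lambda + (m+2\lambda-1)C_{m-1}^\lambda$ with $\lambda=(d-1)/2$, but the packaging differs. The paper works directly with integral kernels: the kernel of $\sum_j \omega_j P_\ell\,\omega_j$ is $(\omega\cdot\omega')P_\ell(\omega,\omega')$, and since $P_m(\omega,\omega') = \tfrac{\nu_m}{|\Sph^d|\, C_m(1)}\,C_m(\omega\cdot\omega')$, the recurrence applied to $tC_\ell(t)$ together with the explicit multiplicity formula $\nu_m = \tfrac{(d-2+m)!(d+2m-1)}{m!(d-1)!}$ yields the coefficients directly. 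Your route trades the kernel formula and the dimensions $\nu_m$ for Schur's lemma followed by testing on zonal harmonics and evaluating at the pole; the arithmetic then involves only the ratios $C_{\ell\pm 1}(1)/C_\ell(1)$. Neither approach is longer or deeper---the kernel version is a little more economical because it skips the representation-theoretic preamble, while yours makes the invariance explicit and avoids the dimension formula.

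One remark on the degenerate case $d=1$, $\ell=1$: actually carrying out your method there (or simply computing $\sum_{j=1}^{2}\omega_j P_1(\omega_j\cdot 1)=\omega_1^2+\omega_2^2=1$) gives the coefficient of $P_0$ as $1$, not $\tfrac12$. The value $\tfrac12$ in the stated convention appears to be a slip in the paper; your argument is sound and would correct it rather than reproduce it, so ``produces the stated coefficients'' is not quite accurate in that single instance.
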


\begin{proof}
	We prove the equality of integral kernels
	$$
	(\omega\cdot\omega')\, P_\ell(\omega,\omega') = \tfrac{\ell+1}{2\ell+d+1}\, P_{\ell+1}(\omega,\omega') + \tfrac{\ell+d-2}{2\ell+d-3}\,P_{\ell-1}(\omega,\omega') \,.
	$$
	We have, for all $\ell$,
	$$
	P_\ell(\omega,\omega') = \frac{\nu_\ell}{|\Sph^d|\,C_\ell^{((d-1)/2)}(1)}\, C_\ell^{((d-1)/2)}(\omega\cdot\omega') \,,
	$$
	where $\nu_\ell$ is the dimension of the space of spherical harmonics of degree $\ell$ and where $C_\ell^{((d-1)/2)}$ is a Gegenbauer polynomial. For this formula, without the explicit value of the constant, see, e.g. \cite[Theorem IV.2.14]{StWe}. The value of the constant is determined by the relation $\Tr P_\ell = \nu_\ell$.
	
	The claimed formula now follows from the recursion relation for Gegenbauer polynomials \cite[(22.7.3)]{AbSt},
	$$
	2(\ell+\alpha)\, t C_\ell^{(\alpha)}(t) = (\ell+1) C_{\ell+1}^{(\alpha)}(t) + (\ell+2\alpha-1) C_{\ell-1}^{(\alpha)}(t) \,
	$$
	together with the normalization \cite[(22.2.3)]{AbSt}
	$$
	C_\ell^{(\alpha)}(1) = \binom{\ell+2\alpha-1}{\ell} \ \ \text{if}\ \alpha>0 \,,
	\qquad
	C_\ell^{(0)}(1) = \begin{cases} 1 & \text{if}\ \ell=0 \,,\\ \frac2\ell & \text{if}\ \ell\geq 1 \,,
		\end{cases}
	$$
	and the multiplicity formula \cite[Section IV.2]{StWe}
	$$
	\nu_\ell = \frac{(d-2+\ell)!\, (d+2\ell-1)}{\ell!\, (d-1)!}
	$$
	(with the convention that $\nu_0=1$ if $d=1$).
\end{proof}

It follows from Lemma \ref{multbycoord} that
\begin{align*}
	\sum_{j=1}^{d+1} \mathcal E_s[\omega_j U] & = \sum_{\ell=0}^\infty \tfrac{\Gamma(\ell + \frac{d}{2} + s)}{\Gamma(\ell + \frac{d}{2} - s)} \sum_{j=1}^{d+1} \|P_\ell\omega_j U\|_2^2 \\
	& = \sum_{\ell=0}^\infty \tfrac{\Gamma(\ell + \frac{d}{2} + s)}{\Gamma(\ell + \frac{d}{2} - s)}
	\left( \tfrac{\ell+1}{2\ell+d+1} \left\| P_{\ell+1} U \right\|_2^2 + \tfrac{\ell+d-2}{2\ell+d-3}\left\| P_{\ell-1} U \right\|_2^2 \right) \\
	& = \sum_{\ell=0}^\infty \left( \tfrac{\Gamma(\ell-1 + \frac{d}{2} + s)}{\Gamma(\ell-1 + \frac{d}{2} - s)} \tfrac{\ell}{2\ell+d-1} + \tfrac{\Gamma(\ell+1 + \frac{d}{2} + s)}{\Gamma(\ell+1 + \frac{d}{2} - s)} \tfrac{\ell+d-1}{2\ell+d-1} \right) \left\| P_{\ell} U \right\|_2^2 \\
	& =  \sum_{\ell=0}^\infty \tfrac{\Gamma(\ell + \frac{d}{2} + s)}{\Gamma(\ell + \frac{d}{2} - s)} \left( \tfrac{\ell-1+\frac d2-s}{\ell-1+\frac d2 + s}\, \tfrac{\ell}{2\ell+d-1} + \tfrac{\ell+\frac d2 +s}{\ell+\frac d2-s} \, \tfrac{\ell+d-1}{2\ell+d-1} \right) \left\| P_{\ell} U \right\|_2^2	\,.
\end{align*}
Thus,
\begin{equation}
	\label{eq:secondvarsimpli}
	\sum_{j=1}^{d+1} \mathcal E_s[\omega_j U] - (q-1)\, \mathcal E_s[U]
	= - \sum_{\ell=0}^\infty \tfrac{\Gamma(\ell + \frac{d}{2} + s)}{\Gamma(\ell + \frac{d}{2} - s)}\, w_s(\ell) \left\| P_{\ell} U \right\|_2^2
\end{equation}
with
$$
w_s(\ell) := \tfrac{d+2s}{d-2s} - \tfrac{\ell-1+\frac d2-s}{\ell-1+\frac d2 + s}\, \tfrac{\ell}{2\ell+d-1} - \tfrac{\ell+\frac d2 +s}{\ell+\frac d2-s} \, \tfrac{\ell+d-1}{2\ell+d-1} \,.
$$
A tedious but straightforward computation shows that
\begin{align*}
	w_s(\ell) = \frac{4s}{d-2s} \, \frac{\ell(\ell+d-1)}{(\ell-1+\frac d2+s)(\ell+\frac d2-s)} \,,
\end{align*}
so
\begin{equation}
	\label{eq:weight}
	w_s(\ell)\geq 0 
	\qquad\text{with equality if and only if}\ \ell=0 \,.
\end{equation}
Taking $U=U_*$ in \eqref{eq:secondvarsimpli}, recalling the second variation inequality \eqref{eq:secondvarineq} and using \eqref{eq:weight}, we deduce that $P_\ell U_*=0$ for all $\ell\geq 1$, that is, $U_*$ is a constant, as we wanted to prove. This completes the proof of Theorem \ref{charlocminsphere}.


\subsection*{Appendix: Subcritical interpolation inequalities}

While the main focus of these lectures is on the Sobolev inequality with critical exponent, in this appendix we make a brief digression to the subcritical case and show the following result.

\begin{theorem}\label{bvvb}
	Let $2\leq q<\infty$ if $d=1,2$ and $2\leq q<\frac{2d}{d-2}$ if $d\geq 3$. Then for all $U\in H^1(\Sph^d)$,
	\begin{equation}
		\label{eq:bvvb}
		\int_{\Sph^d} \left( |\nabla U|^2 + \frac{d}{q-2}\, U^2 \right)d\omega \geq \frac{d}{q-2}\, |\Sph^d|^{1-2/q} \left( \int_{\Sph^d} |U|^q\,d\omega\right)^{2/q}.
	\end{equation}
	with equality if and only if $U$ is constant.
\end{theorem}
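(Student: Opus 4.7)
My plan is to combine a direct variational argument with a Bochner-type rigidity identity of Bidaut-V\'eron--V\'eron. Consider the quotient
$$
Q[U] := \frac{\int_{\Sph^d}\bigl(|\nabla U|^2 + \tfrac{d}{q-2}\,U^2\bigr)\,d\omega}{\|U\|_q^2}.
$$
Under the hypotheses of the theorem, the embedding $H^1(\Sph^d)\hookrightarrow L^q(\Sph^d)$ is \emph{compact}, so the direct method yields a minimizer; replacing it by its absolute value and invoking standard elliptic regularity together with the strong maximum principle gives a smooth, strictly positive minimizer $U_*$ with $\|U_*\|_q=1$. Setting $\lambda := Q[U_*]$, this $U_*$ satisfies the Euler--Lagrange equation
$$
-\Delta U_* + \tfrac{d}{q-2}\,U_* = \lambda\,U_*^{q-1}.
$$
Since the constant function $|\Sph^d|^{-1/q}$ competes with value $\tfrac{d}{q-2}|\Sph^d|^{1-2/q}$, we have $\lambda\leq\tfrac{d}{q-2}|\Sph^d|^{1-2/q}$, so the theorem reduces to showing that every positive smooth solution of the Euler--Lagrange equation is constant.

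Following the proof of Theorem~\ref{charlocminsphere}, a natural first attempt is to plug $\phi_j=\omega_j U_*$ into the second variation of $Q$ at $U_*$ and sum over $j=1,\dots,d+1$. Using $\sum_j\omega_j^2=1$, $\sum_j|\nabla(\omega_j U_*)|^2=|\nabla U_*|^2+d\,U_*^2$, and the Euler--Lagrange equation, this produces
$$
(q-2)\Bigl(\lambda\,|v|^2-\int_{\Sph^d}|\nabla U_*|^2\,d\omega\Bigr)\geq 0, \qquad v := \int_{\Sph^d}\omega\,U_*^q\,d\omega\in\R^{d+1}.
$$
In the critical case Lemma~\ref{brouwer} let one enforce $v=0$ by a conformal reparametrization, but in the subcritical setting the functional is only invariant under isometries, so this route stalls.

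I would therefore switch to a Bochner rigidity argument. Applied to $w := U_*^\beta$ on $\Sph^d$ (with Ricci curvature $(d-1)g$), the Weitzenb\"ock identity $\tfrac{1}{2}\Delta|\nabla w|^2 = |\nabla^2 w|^2 + \nabla w\cdot\nabla\Delta w + (d-1)|\nabla w|^2$ integrates, together with the pointwise Lichnerowicz bound $|\nabla^2 w|^2\geq\tfrac{1}{d}(\Delta w)^2$, to
$$
\int_{\Sph^d}\Bigl|\nabla^2 w - \tfrac{\Delta w}{d}\,g\Bigr|^2 d\omega = \tfrac{d-1}{d}\int_{\Sph^d}(\Delta w)^2\,d\omega - (d-1)\int_{\Sph^d}|\nabla w|^2\,d\omega.
$$
Using $\Delta w = \beta U_*^{\beta-1}\Delta U_*+\beta(\beta-1)U_*^{\beta-2}|\nabla U_*|^2$ and the Euler--Lagrange equation to eliminate $\Delta U_*$, the right-hand side rearranges---after several integrations by parts, with $\beta$ chosen so that the cross-term of the form $\int U_*^\delta|\nabla U_*|^2\Delta U_*\,d\omega$ cancels---into an expression of the shape $-C_{d,q}\int_{\Sph^d} U_*^\gamma|\nabla U_*|^4\,d\omega$, where the coefficient $C_{d,q}$ is \emph{strictly positive} precisely in the subcritical range $q<2d/(d-2)$ (and unconditionally when $d\leq 2$). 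The resulting identity then forces both sides to vanish; in particular $|\nabla U_*|\equiv 0$, so $U_*$ is constant, $\lambda=\tfrac{d}{q-2}|\Sph^d|^{1-2/q}$, and \eqref{eq:bvvb} follows with equality exactly for constants.

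The main obstacle is the algebraic book-keeping in the Bochner step: determining the correct exponent $\beta$ and verifying that $C_{d,q}$ has the right sign throughout the subcritical range. This is precisely where the hypothesis $q<2d/(d-2)$ enters in a genuinely essential way, consistent with the existence of non-constant optimizers at the critical exponent in Theorem~\ref{charlocminsphere}.
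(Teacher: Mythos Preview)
Your approach is valid in outline but takes a genuinely different route from the paper. The paper (following Beckner) deduces the subcritical inequality \emph{as a corollary} of the sharp fractional Sobolev inequality $\mathcal E_s[U]\geq S_{d,s}\|U\|_q^2$ on $\Sph^d$ already established in Lecture~2: setting $s:=d(\tfrac12-\tfrac1q)$ so that $q=\tfrac{2d}{d-2s}$, the task reduces to the purely spectral statement
\[
\frac{\Gamma(\ell+\tfrac d2+s)}{\Gamma(\ell+\tfrac d2-s)}\ \leq\ C\Bigl(\ell(\ell+d-1)+\tfrac{d}{q-2}\Bigr)
\qquad\text{for all }\ell\in\N_0,
\]
with the optimal $C$ attained exactly at $\ell=0$. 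This is a short gamma-function computation, works uniformly in $d\geq 1$, and immediately yields the characterization of equality (only constants) because equality in the fractional inequality for non-constant $U$ would force the spectral inequality to be tight at some $\ell\geq 1$, which it is not.

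Your route---compact embedding, existence of a positive smooth minimizer, then the Bidaut-V\'eron--V\'eron Bochner rigidity to force constancy---is the approach of \cite{BVVe,Ba} mentioned in the bibliographic remarks. It is self-contained (no fractional machinery needed) and gives more: it classifies all positive solutions of the Euler--Lagrange equation, not just minimizers. The price is the algebra you flag: choosing $\beta$ and checking the sign of $C_{d,q}$ is the entire content, and you have not carried it out. Two further caveats. First, for $d=1$ the Bochner/Lichnerowicz step degenerates (there is no traceless Hessian), so the one-dimensional case needs a separate argument; the paper's spectral proof covers it uniformly. Second, your intermediate second-variation computation is correct and instructive, but as you note it does not close in the subcritical regime because there is no conformal action to kill $v$; it is worth remarking that this is precisely why the paper abandons that line here and passes through the fractional inequality instead.
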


Inequality \eqref{eq:bvvb} for $q=\frac{2d}{d-2}$ turns into the inequality $\mathcal E_1[U]\geq S_{d,1}\|U\|_q^2$, which we have shown in the main part of this lecture. We emphasize, however, that in this critical case the set of functions attaining equality is strictly larger than in the subcritical case.

\begin{proof}
	Given $q$ we define $s:=d(\frac12 -\frac1q)$, so that $q=\frac{2d}{d-2s}$. Then, by the main theorem of this lecture, we have $S_{d,s} \| U\|_q^2 \leq \mathcal E_s[U]$, and our goal is to bound $\mathcal E_s[U]$ from above by a constant times the left side in \eqref{eq:bvvb}. Expanding $U$ into spherical harmonics, the task becomes to find the smallest constant $C$ in the inequality
	$$
	\frac{\Gamma(\ell+\frac d2 + s)}{\Gamma(\ell+\frac d2 -s)} 
	= \frac{\Gamma(\ell+\frac d2 + d(\frac12-\frac1q))}{\Gamma(\ell+\frac d2 -d(\frac12-\frac1q))} 
	\leq C \left( \ell(\ell+d-1) + \frac{d}{q-2} \right)
	\qquad\text{for all}\ \ell\in\N_0 \,.
	$$
	Using properties of gamma functions, one can show that the optimal constant is attained exactly at $\ell=0$, which implies \eqref{eq:bvvb}.
\end{proof}


\subsection*{Appendix: Optimizing sequences}

In this appendix we want to show that the technique of fixing the center of mass in Lemma \ref{brouwer} can also be useful to prove the relative compactness up to symmetries of optimizing sequences. We give the argument for $s=1$.

The crucial ingredient is the following refined inequality of Aubin. For $d\geq 3$ and $\epsilon>0$ there is a $C_\epsilon<\infty$ such that for all $U\in H^1(\Sph^d)$ with
$$
\int_{\Sph^d} \omega\, |U|^q\,d\omega = 0
$$
one has
\begin{equation}
	\label{eq:aubincom}
	\mathcal E_1[U] \geq (1-\epsilon) 2^{2/d} S_{d,1} \|U\|_q^2 - C_\epsilon \|U\|_2^2 \,.
\end{equation}
This inequality implies that for suitably normalized functions one obtains a Sobolev constant that is $>S_{d,1}$, at the expense of an $L^2$-term, which in many applications is harmless.

\medskip

Let us use \eqref{eq:aubincom} to prove relative compactness of optimizing sequences. Let $(U_n)\subset H^1(\Sph^d)$ with $\mathcal E_1[U_n]=1$ and $\| U_n\|_q^2 \to S_{d,1}^{-1}$. By Lemma \ref{brouwer} there is a conformal transformation $\Psi_n$ of $\Sph^d$ such that $\widetilde U_n := (U_n)_{\Psi_n}$ satisfies $\int_{\Sph^d} \omega |\widetilde U_n|^q\,d\omega = 0$. Moreover, by conformal invariance, $\mathcal E_1[\widetilde U_n]=1$ and $\| \widetilde U_n\|_q^2 \to S_{d,1}^{-1}$. After passing to a subsequence, we may assume that $\widetilde U_n\rightharpoonup \widetilde U$ in $H^1(\Sph^d)$. By Rellich's lemma, $\widetilde U_n\to \widetilde U$ in $L^2(\Sph^d)$. Therefore, applying \eqref{eq:aubincom} to $\widetilde U_n$ and passing to the limit, we obtain
$$
1 \geq (1-\epsilon) 2^{2/d} - C_\epsilon \|\widetilde U\|_2^2 \,.
$$
This is valid for any fixed $\epsilon>0$. Choosing it so that $1<(1-\epsilon) 2^{2/d}$, we deduce that $\widetilde U\neq 0$. Thus, we have shown that there is something somewhere. The proof that there is nothing else anywhere else is as in the first lecture.

\medskip

For the sake of completeness we present Aubin's proof of \eqref{eq:aubincom}.

\begin{proof}[Proof of \eqref{eq:aubincom}]
	Let $\delta>0$ and set $h(t):= (t^2 +\delta^2)^{-(q-1)/(2q)}t$ for $t\in[-1,1]$. For $\Omega,\omega\in\Sph^d$ we set $h_\Omega(\omega):=h(\Omega\cdot\omega)$ and note that
	$$
	Z:= \int_{\Sph^d} h_\Omega(\omega)^2\,d\Omega =  |\Sph^{d-1}| \int_0^\pi h(\cos\theta)^2 \sin^{d-1}\theta\,d\theta
	$$
	is independent of $\omega$.
	It follows that
	\begin{align*}
		\| U \|_q^2 & = \|U^2\|_{q/2} = \left\| Z^{-1} \int_{\Sph^d} h_\Omega^2 U^2\,d\Omega \right\|_{q/2} \leq Z^{-1} \int_{\Sph^d} \| h_\Omega^2 U^2 \|_{q/2}\,d\Omega \\
		& = Z^{-1} \int_{\Sph^d} \|h_\Omega U \|_q^2\,d\Omega \,.
	\end{align*}
	We will show that there is a constant $C$, depending only on $q$, such that for each $\Omega\in\Sph^d$,
	\begin{align}
		\label{eq:aubincomproof}
		2^{-2/q} S_{d,1} \|h_\Omega U \|_q^2 & \leq 2^{-1} \int_{\Sph^d} h_\Omega^2 \left( |\nabla U|^2 + \tfrac{d(d-2)}{4} |U|^2\right) d\omega + 2^{-1} \int_{\Sph^d} |\nabla h_\Omega|^2 |U|^2\,d\omega \notag \\
		& \quad + 2^{-1} \int_{\Sph^d} (\nabla h_\Omega^2)\cdot U\nabla U\,d\omega 
		+ C^2 \delta^{2/q} \mathcal E_1[U] \,.
	\end{align}
	Integrating this bound with respect to $\Omega$, we obtain
	\begin{align*}
		2^{-2/q} S_{d,1} Z^{-1} \int_{\Sph^d} \|h_\Omega U \|_q^2\,d\Omega & \leq \left( 2^{-1} + C^2 Z^{-1} |\Sph^d|\delta^{2/q} \right) \mathcal E_1[U] + 2^{-1} \widetilde C \|U\|_2^2
	\end{align*}
	with $\widetilde C := Z^{-1} \int_{\Sph^d} |\nabla h_\Omega(\omega)|^2\,d\Omega$ (which is independent of $\omega$). Noting that $Z$ converges as $\delta\to 0$, we obtain the claimed bound in \eqref{eq:aubincom}. (In contrast, we note that $\widetilde C$ diverges as $\delta\to 0$, since $|t|^{-(q-1)/q}t$ is not $H^1$ near $t=0$.)
	
	It remains to prove \eqref{eq:aubincomproof}. We may assume that $\mathcal E_1[(h_\Omega)_+\nabla U]\geq \mathcal E_1[(h_\Omega)_-\nabla U]$, the opposite case being similar. We use $0 \leq t - h(t)^q\leq 2C^q\delta$ for all $t\in[0,1]$ and some $C$, depending only on $q$, to bound
	\begin{align*}
		\| h_\Omega U \|_q^q & \leq \int_{\Sph^d} (\Omega\cdot\omega)_+ |U|^q \,d\omega + \int_{\Sph^d} (h_\Omega)_-^q |U|^q\,d\omega \\
		& = \int_{\Sph^d} (\Omega\cdot\omega)_- |U|^q \,d\omega + \int_{\Sph^d} (h_\Omega)_-^q |U|^q\,d\omega \\
		& \leq 2 \int_{\Sph^d} \left( (h_\Omega)_-^q + C^q \delta \right) |U|^q\,d\omega \\
		& \leq 2 \int_{\Sph^d} \left( (h_\Omega)_-^2 + C^2\delta^{2/q} \right)^{q/2} |U|^q\,d\omega \,.
	\end{align*}
	The triangle inequality and Sobolev's inequality imply
	\begin{align*}
		2^{-2/q} S_{d,1} \|h_\Omega U\|_q^2 & \leq S_{d,1} \left( \| (h_\Omega)_-U\|_q^2 + C^2 \delta^{2/q} \| U \|_q^2 \right) \\
		& \leq \mathcal E_1[(h_\Omega)_- U] + C^2\delta^{2/q} \mathcal E_1[U] \\
		& \leq 2^{-1}\left( \mathcal E_1[(h_\Omega)_- U] + \mathcal E_1[(h_\Omega)_+ U] \right) + C^2\delta^{2/q} \mathcal E_1[U] \,.
	\end{align*}
	Using $(h_\Omega)_-^2 + (h_\Omega)_+^2= h_\Omega^2$ and
	$$
	|\nabla (h_\Omega)_- U|^2 + |\nabla (h_\Omega)_+ U|^2 = h_\Omega^2 |\nabla U|^2 + |\nabla h_\Omega|^2 |U|^2 + (\nabla h_\Omega^2)\cdot U\nabla U \,,
	$$
	we obtain the claimed bound \eqref{eq:aubincomproof}.	
\end{proof}


\subsection*{Bibliographic remarks}

There are a number of alternative proofs of Theorem~\ref{charmin}, in particular for $s=1$. Here we review some of them and give references.

The proof of Theorem \ref{charmin} presented in this lecture in the case $s=1$ is from \cite{FrLi0}; see also \cite{FrLi}. The proof for general $s$ is a new variant of the recent proof in \cite{Ya}. It simplifies the corresponding argument in \cite{FrLi0} (where duality was invoked and only positive functions were considered). The new ingredient in \cite{Ya}, compared to \cite{FrLi0}, is a commutator identity, which for integer $s$ is due to \cite{Cas}. Our Lemma \ref{multbycoord} serves a similar purpose.

The presented proof may not be the most direct proof of Theorem \ref{charmin}, but it has the advantage of yielding Theorem \ref{charlocmin}, which may be new (at least for functions that are not necessarily nonnegative). Also, this proof is natural in this lecture series in view of the second variation analysis in the next lecture. Further, the approach presented in this lecture works in the setting of the Heisenberg group, where several other techniques mentioned below (for instance those based on symmetric decreasing rearrangement or the moving plane method) seem not to work. It has also been applied in the fully nonlinear setting \cite{Cas,CaWa}.

Lemma \ref{brouwer} is due to Hersch \cite{He}, where it was used in the problem of maximizing the first nontrivial eigenvalue of the Laplace--Beltrami operator on $\Sph^2$ over all metrics with fixed area and conformal to the standard metric.

As far as we know, the optimal value of the constant $S_{d,1}$ with $s=1$ as well as the optimizing functions appeared for the first time in the unpublished preprint of Rodemich \cite{Rod} and in the papers of Aubin \cite{Au} and Talenti \cite{Ta}. These works deal with the more general situation of an $L^p$-norm of the gradient with $1\leq p<d$ (and the correspondingly modified $q$). They use rearrangement techniques to reduce the problem to a one-dimensional problem that had been solved by Bliss \cite{Bl}. Since the relevant rearrangement inequality for the gradient can be an equality without the functions being radial, it does not seem possible to derive the characterization of optimizers using these techniques.

We also mention \cite{Ro} where local minimality of $Q$ is shown for $s=1$ and $d=3$.

Theorem \ref{charmin} for general $s$ is due to Lieb \cite{Li}, who found the optimal value of $S_{d,s}$ and characterized all optimizers. He carried this out in the dual formulation of the Hardy--Littlewood--Sobolev inequality. In this connection he observed and utilized the conformal invariance for general $s$. He also used a strict rearrangement inequality from \cite{Li0}. The interplay between rearrangement and conformal invariance (`competing symmetries') is also crucial for the alternative proof of Carlen and Loss \cite{CaLo}. The role of conformal invariance was emphasized in \cite{Be}. There Lemma \ref{evsphere} appeared, albeit without proof. For GJMS operators on more general manifolds than $\Sph^d$ we refer to \cite{GrJeMaSp,GrZw}.

The minimization problem $S_{d,1/2}$ with $s=1/2$ is equivalent to finding the best constant in a Sobolev trace inequality on $\R^d\times\R_+$; see, e.g., \cite{Be}. The latter problem was solved by Escobar \cite{Es} by an adaptation of Obata's method mentioned below. For an alternative proof see \cite{CaLo1}.

Even before \cite{Au,Ta}, Obata \cite{Ob} has characterized all `sufficiently nice' solutions of the Euler--Lagrange equation corresponding to the minimization problem for $S_{d,1}$ on $\Sph^d$. Up to proving the existence of an optimizer and showing that it is `sufficiently nice', this leads to the sharp value of the constant $S_{d,1}$ and the characterization of its optimizers. The fact that optimizers, which are weak solutions of the equation, are `sufficiently nice' is due to Trudinger \cite{Tr}. The method of Obata was extended in \cite{GiSp,BVVe}. A related method appears in \cite[Theorem 6.10]{Ba} in the setting of diffusion semigroups satisfying a curvature-dimension condition.

Another result concerning the Euler--Lagrange equation corresponding to the minimization problem for $S_{d,1}$ on $\R^d$ was obtained in \cite{GiNiNi}. There, using the method of moving planes, it was shown that any positive, classical and sufficiently fast decaying solution is necessarily radial about some point and decreasing with respect to the distance from that point. This reduces the classification of all solutions to a simple ODE analysis. We remark that the relevant ODE becomes autonomous in logarithmic coordinates. It was observed in \cite{CaGiSp} that the decay assumption in \cite{GiNiNi} can be removed by employing the invariance under inversions on the unit sphere.

The method of moving planes (and its relative, the method of moving spheres) has been adapted in \cite{ChLiOu,LiY} to the Euler--Lagrange equation for the optimization problem $S_{d,s}$ in the dual formulation for general $s$. Combined with the conformal invariance this leads to a classification of all positive solutions and, consequently, of all minimizers.

A related reflection/inversion technique was used in \cite{FrLi00,FrLi01} to give another proof of the characterization of optimizers under the additional assumption $s\leq 1$. 

A proof of Theorem \ref{charmin} via optimal transport theory appears in \cite{CENaVi} (for $s=1$) and in \cite{Na} (for $s=1/2$).

Theorem \ref{charmin} with $s=1$ can also be proved using nonlinear (porous medium or fast diffusion) flows; see \cite{De} and, in the dual setting of an HLS inequality, \cite{CaCaLo}.

The subcritical Sobolev inequality in Theorem \ref{bvvb} is classical for $d=1$. For general $d$ it appeared around the same time in works of Bidaut-V\'eron--V\'eron \cite[Appendix B]{BVVe}, Bakry \cite[Theorem 6.10]{Ba} and Beckner \cite[Theorem 4]{Be}. Our presentation follows the latter paper. The method of \cite{Ba,BVVe} is related to that of Obata \cite{Ob} (see also \cite[Appendix B]{GiSp}) and also classifies solutions of the corresponding Euler--Lagrange equation. An alternative proof of the subcritical Sobolev inequality on the sphere is to first use symmetric decreasing rearrangement on the sphere and then to use the one-dimensional result in \cite[pp. 204--205]{BaEm}. Yet another proof is by nonlinear flows \cite{De}; for more on linear and nonlinear flows see \cite{DoEsKoLo,DoEsLo1,DoEsLo2}. These works, in particular, bring into evidence a relation between the `elliptic' proofs of \cite{Ob,GiSp,BVVe,Ba} and the `parabolic' proofs in \cite{BaEm,De}. For a remarkable recent results obtained by elliptic methods, see \cite{DoEsLo3}.

Aubin's inequality \eqref{eq:aubincom} is from \cite{Au2}. Inequalities of this type have recently attracted some attention; see, e.g., \cite{ChHa,HaWa,ChWeWu}. The idea of using Aubin's inequality to deduce that a weak limit is nonzero is implicit in \cite[Lemme 3]{Au2} and \cite[Lemma 5.7]{ChYa} in the context of prescribing scalar curvature.

\section*{Lecture 3: Stability}

Our goal in this lecture is to prove a stability result for the sharp Sobolev inequality
$$
\|(-\Delta)^{s/2} u\|_2^2 \geq S_{d,s} \| u \|_q^2
\qquad\text{for all}\ u\in\dot H^s(\R^d) \,.
$$
That is, we want to prove that if $\|(-\Delta)^{s/2} u\|_2^2/\| u \|_q^2$ is close to $S_{d,s}$, then $u$ is close in $\dot H^s(\R^d)$ to an optimizer. We denote by
$$
\mathcal G := \left\{ g \in \dot H^s(\R^d) :\ \|(-\Delta)^{s/2} g\|_2^2 = S_{d,s} \| g \|_q^2 \right\}
$$
the set of all optimizers (and zero).

The compactness theorem from Lecture 1 already gives a qualitative version of this stability. Specifically, it implies that for any $\epsilon>0$ there is a $\delta>0$ such that, if $\|(-\Delta)^{s/2} u\|_2^2\leq (1+\delta) S_{d,s} \| u \|_q^2$, then $\inf_{g\in\mathcal G} \|(-\Delta)^{s/2}(u-g)\|_2 \leq \epsilon \|(-\Delta)^{s/2} u\|_2$.

In this lecture we are interested in a \emph{quantitative} stability result, which shows an explicit dependence of $\delta$ on $\epsilon$. That is, we want to bound the normalized Sobolev deficit $\|(-\Delta)^{s/2} u\|_2^2/\| u \|_q^2 - S_{d,s}$ from below by a power of the normalized distance $\inf_{g\in\mathcal G} \|(-\Delta)^{s/2}(u-g)\|_2 / \|(-\Delta)^{s/2} u\|_2$,
$$
\frac{\|(-\Delta)^{s/2} u\|_2^2}{\| u \|_q^2} - S_{d,s}
\gtrsim \left( \inf_{g\in\mathcal G} \frac{\|(-\Delta)^{s/2}(u-g)\|_2}{\|(-\Delta)^{s/2} u\|_2} \right)^\alpha.
$$
Since $\inf_{g\in\mathcal G} \|(-\Delta)^{s/2}(u-g)\|_2 / \|(-\Delta)^{s/2} u\|_2\leq 1$ (see (a) in Lemma \ref{dist} below), this inequality is stronger the smaller the power $\alpha$ is. Meanwhile, since we expect the left side to be sufficiently smooth and since minima of smooth functions are of quadratic or higher order, we do not expect a better power than $\alpha=2$.

The following theorem provides such a bound with the desired power $\alpha=2$. For $s=1$ it is due to Bianchi and Egnell. (Strictly speaking, the following theorem proves the above bound with the right side multiplied by a factor $\|u\|_q^2/\|(-\Delta)^{s/2}u\|_2^2\leq S_{d,s}^{-1}$. This difference, however, is immaterial as long as we do not care about constants: If $\|u\|_q^2/\|(-\Delta)^{s/2}u\|_2^2\geq (2S_{d,s})^{-1}$, say, then the prefactor is harmless, while if $\|u\|_q^2/\|(-\Delta)^{s/2}u\|_2^2< (2S_{d,s})^{-1}$, then the inequality is anyway trivially true in view of the bound $\inf_{g\in\mathcal G} \|(-\Delta)^{s/2}(u-g)\|_2 / \|(-\Delta)^{s/2} u\|_2\leq 1$.)

\begin{theorem}\label{be}
	Let $0<s<\frac d2$ and $q:=\frac{2d}{d-2s}$. Then, for all $u\in\dot H^s(\R^d)$,
	$$
	\|(-\Delta)^{s/2} u\|_2^2 - S_{d,s} \| u \|_q^2 \gtrsim \inf_{g\in\mathcal G} \|(-\Delta)^{s/2}(u-g)\|_2^2 \,.
	$$
\end{theorem}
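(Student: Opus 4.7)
My plan is to run the classical Bianchi--Egnell contradiction argument, reducing the global stability statement to a local quadratic estimate near the manifold of optimizers $\mathcal{G}$. By the homogeneity of both sides under $u\mapsto cu$ and under translations/dilations, it suffices to prove the bound for sequences $(u_n)\subset\dot H^s(\R^d)$ normalized by $\|(-\Delta)^{s/2}u_n\|_2 = 1$. Assume for contradiction that there exists such a sequence with
\[
\delta_n := 1 - S_{d,s}\|u_n\|_q^2 = o(d_n^2)\,,
\qquad
d_n := \inf_{g\in\mathcal G} \|(-\Delta)^{s/2}(u_n-g)\|_2\,.
\]
Since $d_n \leq 1$ (choose $g=0$), $\delta_n\to 0$, so $(u_n)$ is an optimizing sequence. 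By Theorem~\ref{comp}, after passing to a subsequence and applying translations and dilations (which preserve $d_n$, $\delta_n$ and the normalization since $\mathcal G$ is invariant under them), we may assume $u_n \to u_*$ strongly in $\dot H^s$ for some $u_*\in\mathcal G\setminus\{0\}$. In particular $d_n\to 0$.

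Next I would show the infimum defining $d_n$ is attained by some $g_n\in\mathcal G$, and that $g_n\to u_*$ in $\dot H^s$. Attainment follows from the fact that $\mathcal G$ is closed under strong $\dot H^s$ convergence (because optimality is preserved in the limit by the argument of Lecture 1), combined with weak compactness applied to a minimizing sequence for $g\mapsto \|(-\Delta)^{s/2}(u_n-g)\|_2$ restricted to the (bounded) set $\{\|(-\Delta)^{s/2}g\|_2\leq 2\}$. The convergence $g_n\to u_*$ follows from the fact that $u_n\to u_*$ and $\|(-\Delta)^{s/2}(u_n-g_n)\|_2=d_n\to 0$. The characterization in Theorem~\ref{charmin} tells us that $\mathcal G\setminus\{0\}$ is a smooth $(d+2)$-dimensional manifold in $\dot H^s(\R^d)$ parametrized by $(c,a,\lambda)\in(\R\setminus\{0\})\times\R^d\times\R_+$. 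Decompose
\[
u_n = g_n + d_n r_n\,,
\qquad
\|(-\Delta)^{s/2} r_n\|_2 = 1\,,
\]
where, by the variational characterization of $g_n$, the function $r_n$ is orthogonal in the $\dot H^s$-inner product to the tangent space $T_{g_n}\mathcal G$.

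Now I would Taylor-expand the numerator and denominator. Using the Euler--Lagrange equation $(-\Delta)^s g_n = S_{d,s}\|g_n\|_q^{2-q} |g_n|^{q-2}g_n$ to kill the first-order term, and the Brezis--Lieb-type expansion for the $L^q$-norm (plus smoothness of $t\mapsto |t|^q$ for $q>2$), a careful computation gives
\[
\|(-\Delta)^{s/2} u_n\|_2^2 - S_{d,s}\|u_n\|_q^2
= d_n^2\, \mathcal Q_{g_n}[r_n] + o(d_n^2)\,,
\]
where $\mathcal Q_{g_n}$ is the quadratic form associated with the second-variation operator analogous to \eqref{eq:secondders} (transferred from $\Sph^d$ to $\R^d$ via \eqref{eq:uU}). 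Passing to the sphere using the conformal equivalence, this operator is unitarily equivalent to the operator $\mathcal L_s$ in \eqref{eq:defls}, whose spectrum was computed explicitly in Lecture~2: the eigenvalues are $\frac{\Gamma(\ell+\frac d2+s)}{\Gamma(\ell+\frac d2-s)} - \frac{\Gamma(1+\frac d2+s)}{\Gamma(1+\frac d2-s)}$ for $\ell\geq 0$, which vanish exactly at $\ell=0,1$ and are \emph{strictly} positive for $\ell\geq 2$ with a uniform gap (the smallest nonzero eigenvalue is that at $\ell=2$). Crucially, the zero modes (spherical harmonics of degrees $0$ and $1$) are precisely the generators of the symmetries, and hence span $T_{g_n}\mathcal G$ after conformal transfer. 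Since $r_n\perp T_{g_n}\mathcal G$, the spectral gap gives
\[
\mathcal Q_{g_n}[r_n] \geq \kappa > 0
\]
with $\kappa$ depending only on $d,s$. This yields $\delta_n \geq \kappa d_n^2 + o(d_n^2)$, contradicting $\delta_n = o(d_n^2)$.

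The main obstacle is the step transferring the orthogonality $r_n\perp T_{g_n}\mathcal G$ (in $\dot H^s$) into the spectral gap for $\mathcal Q_{g_n}$: one needs to verify that the $\dot H^s$-orthogonality to the $(d+2)$-dimensional tangent space $T_{g_n}\mathcal G$ matches, via the conformal change of variables and the operator $A_s$, the $L^2$-orthogonality to $\ran(P_0+P_1)$ on $\Sph^d$ that is required to exploit \eqref{eq:defls}. A secondary technical issue is controlling the remainder in the $L^q$-expansion uniformly for $r_n$ bounded in $\dot H^s$; here one uses the Sobolev embedding $\dot H^s\hookrightarrow L^q$ and the fact that the nonlinearity $t\mapsto|t|^q$ has $C^2$ (indeed, $C^{2,\min(1,q-2)}$) regularity, together with the uniform control on $g_n$ provided by $g_n\to u_*$ in $\dot H^s$.
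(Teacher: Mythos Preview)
Your proposal is correct and follows essentially the same Bianchi--Egnell contradiction strategy as the paper: use Theorem~\ref{comp} to force $d_n\to 0$, decompose $u_n$ about a nearest optimizer, Taylor expand, and exploit the spectral gap of $\mathcal L_s$ on $(\ran(P_0+P_1))^\perp$. The paper merely packages the local expansion and spectral gap into a standalone quantitative bound (Proposition~\ref{beprop}), and uses conformal invariance (via Lemma~\ref{dist}) to normalize the nearest optimizer to the constant function on $\Sph^d$, which cleanly resolves both technical points you flag---attainment of the distance and the passage from $\dot H^s$-orthogonality to $T_{g_n}\mathcal G$ to the $L^2$-orthogonality conditions~\eqref{eq:beortho}.
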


The implicit constant in the inequality in the theorem depends on $d$ and $s$. The argument that we present is via compactness and does not yield an explicit constant. For recent progress on the problem of giving a constructive proof, see the remarks at the end of this lecture.

We will also prove the reverse inequality
\begin{equation}
	\label{eq:bereverse}
	\inf_{u\not\in\mathcal G} \frac{\|(-\Delta)^{s/2} u\|_2^2 - S_{d,s} \| u \|_q^2}{\inf_{g\in\mathcal G} \|(-\Delta)^{s/2}(u-g)\|_2^2} \leq \frac{4s}{d+2s+2} \,,
\end{equation}
which shows, in particular, that the power two of the distance to $\mathcal G$ in the theorem cannot be replaced by a smaller power.


\subsection*{The upper bound}

To get some intuition into the mechanism behind the proof of the theorem, we begin by proving \eqref{eq:bereverse}. It is natural to approach this problem by taking $u=u_*+\epsilon r$ with an optimizer $u_*$ and a function $r$, to be determined, and by expanding the relevant quotient in $\epsilon$. There will be a coefficient in front of the leading order in $\epsilon$ and this coefficient is a functional of $r$. The idea is to determine $r$ so as to minimize this functional. It will turn out that the problem for $r$ is a spectral problem that can be solved explicitly. This gives the constant $\frac{4s}{d+2s+2}$ in \eqref{eq:bereverse} as a certain spectral gap.

It is more convenient to carry out this idea in the equivalent setting of the inequality on the sphere, that is, to prove
\begin{equation}
	\label{eq:bereversesphere}
	\inf_{U\not\in\mathcal H} \frac{\mathcal E_s[U] - S_{d,s} \| U \|_q^2}{\inf_{h\in\mathcal H} \mathcal E_s[U-h]} \leq \frac{4s}{d+2s+2} \,,
\end{equation}
where
$$
\mathcal H := \left\{ h\in H^s(\Sph^d):\ \mathcal E_s[h] = S_{d,s} \|h \|_q^2 \right\}.
$$
The ansatz is then $U=U_*+\epsilon R$ and, by conformal invariance, we may assume $U_*=1$. Similarly to the derivative computations in the previous lecture, we find
\begin{equation}
	\label{eq:beuppernum}
	\lim_{\epsilon\to 0} \epsilon^{-2} \left( \mathcal E_s[1+\epsilon R] - S_{d,s} \| 1+\epsilon R \|_q^2 \right) = \langle R,\mathcal L_s R \rangle
\end{equation}
with the operator $\mathcal L_s$ introduced in \eqref{eq:defls}. This gives the behavior of the numerator on the left side of \eqref{eq:bereversesphere}. We would like to show that the denominator also behaves quadratically in $\epsilon$ and, more precisely, that under suitable assumptions on $R$ we have
$$
\inf_{h\in\mathcal H} \mathcal E_s[(1+\epsilon R)-h] = \epsilon^2 \mathcal E_s[R] \,.
$$
Note that here we always have $\leq$ since we can always choose $h=1\in\mathcal H$.

In the following lemma, we summarize some properties of the distance function
$$
\delta[U] := \inf_{h\in\mathcal H} \sqrt{\mathcal E_s[U-h]} \,.
$$

\begin{lemma}\label{dist}
	Let $0<s<\frac d2$.
	\begin{enumerate}
		\item[(a)] $\delta[U]^2\leq \mathcal E_s[U]$ with strict inequality if and only if $U\neq 0$.
		\item[(b)] For any $U\in \dot H^s(\Sph^d)$, there is an $h\in\mathcal H$ such that $\mathcal E_s[U-h] = \delta[U]^2$. If $U\neq 0$ and $\tau_U:=\delta[U]/\sqrt{\mathcal E_s[U] - \delta[U]^2}$, then
		$$
		\|U-h\|_q \leq \tau_U \|h\|_q \,.
		$$
		\item[(c)] If $\mathcal E_s[U-1] = \delta[U]^2$, then $R:=U-1$ satisfies
		\begin{equation}
			\label{eq:beortho}
			\int_{\Sph^d} R(\omega)\,d\omega = 0
			\qquad\text{and}\qquad
			\int_{\Sph^d} \omega\,R(\omega)\,d\omega = 0 \,.
		\end{equation}
		\item[(d)] There is an $\epsilon_0>0$ such that, if $\mathcal E_s[R]<\epsilon_0$ and \eqref{eq:beortho} holds, then $\delta[1+R]=\sqrt{\mathcal E_s[R]}$.		
	\end{enumerate}
\end{lemma}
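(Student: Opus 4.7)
My plan relies on the explicit parametrization of $\mathcal H$ obtained in Lecture~2: by Theorem~\ref{charlocminsphere} and the example, $\mathcal H = \{c\,h_\zeta : c \in \R,\ |\zeta|<1\}$ with $h_\zeta(\omega):=\bigl(\sqrt{1-|\zeta|^2}/(1-\zeta\cdot\omega)\bigr)^{(d-2s)/2}$, so $\mathcal H$ is a smooth $(d+2)$-dimensional manifold with $h_0=1$, whose tangent space at $1$ is spanned by $1$ and $\frac{d-2s}{2}\omega_j$ for $j=1,\ldots,d+1$; moreover $\mathcal E_s[h_\zeta]=\mathcal E_s[1]$ and $\|h_\zeta\|_q=\|1\|_q$ by conformal invariance. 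Throughout, set $F_U(c,\zeta):=\mathcal E_s[U-c\,h_\zeta]$.

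Part~(a) follows from $0\in\mathcal H$ for the inequality; for strictness when $U\neq 0$, the one-variable quadratic $t\mapsto\mathcal E_s[U-t\,h_\zeta]$ has minimum strictly below $\mathcal E_s[U]$ as soon as $\langle A_sU,h_\zeta\rangle\neq 0$. Should this pairing vanish for every $|\zeta|<1$, Taylor expanding $h_\zeta$ about $\zeta=0$ shows $A_sU$ is $L^2$-orthogonal to every polynomial in $\omega$, hence $A_sU=0$ and $U=0$. Part~(b) existence is a compactness argument on $(c,\zeta)\in\R\times\{|\zeta|<1\}$: the triangle inequality for $\sqrt{\mathcal E_s[\cdot]}$ together with $\mathcal E_s[h_\zeta]=\mathcal E_s[1]$ bounds $|c_n|$ along a minimizing sequence, while $|\zeta_n|\to 1$ would force $c_n\,h_{\zeta_n}\rightharpoonup 0$ in $H^s(\Sph^d)$ (by concentration of its $L^q$-mass at a single point), hence $\lim F_U(c_n,\zeta_n)\geq\mathcal E_s[U]$, contradicting~(a). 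For the $L^q$-bound, differentiating $t\mapsto\mathcal E_s[U-th]$ at $t=1$ (allowed since $th\in\mathcal H$) yields $\langle A_sh,U-h\rangle=0$, giving the Pythagorean identity $\mathcal E_s[h]=\mathcal E_s[U]-\delta[U]^2$; combining $\|U-h\|_q^2\leq S_{d,s}^{-1}\mathcal E_s[U-h]$ with $\|h\|_q^2=S_{d,s}^{-1}\mathcal E_s[h]$ produces $\|U-h\|_q^2\leq\tau_U^2\|h\|_q^2$.

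For (c), first-order optimality of $h=1$ along the two smooth curves $t\mapsto t\cdot 1$ and $\epsilon\mapsto h_{\epsilon e_j}$ in $\mathcal H$ yields $\langle A_sR,1\rangle=0$ and $\langle A_sR,\tfrac{d-2s}{2}\omega_j\rangle=0$ (the derivative $\partial_{\zeta_j}h_\zeta|_{\zeta=0}=\tfrac{d-2s}{2}\omega_j$ since $\sqrt{1-|\zeta|^2}$ has vanishing first derivative at the origin); since $1$ and $\omega_j$ are eigenfunctions of $A_s$ with nonzero eigenvalues, these reduce to $\int R\,d\omega=0$ and $\int\omega_j R\,d\omega=0$.

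Part~(d) is the crux of the argument, and I would handle it by contradiction. A sequence $(R_n)$ with $\mathcal E_s[R_n]\to 0$ satisfying~\eqref{eq:beortho} but with minimizers $h_n\in\mathcal H\setminus\{1\}$ would satisfy $\mathcal E_s[(1+R_n)-h_n]\leq\mathcal E_s[R_n]\to 0$, so $h_n\to 1$ in $H^s(\Sph^d)$ and, in parameters, $(c_n,\zeta_n)\to(1,0)$. The computation in~(c), extended to the full parameter space, shows $\nabla F_{1+R}(1,0)=0$ whenever $R$ satisfies~\eqref{eq:beortho}, while a direct expansion gives
$$
\operatorname{Hess}F_{1+R}(1,0)\bigl[(\dot c,\dot\zeta),(\dot c,\dot\zeta)\bigr]=2\,\mathcal E_s\bigl[\dot c+\tfrac{d-2s}{2}\dot\zeta\cdot\omega\bigr]+O\bigl(\sqrt{\mathcal E_s[R]}\bigr),
$$
which is uniformly positive definite for $\mathcal E_s[R]<\epsilon_0$ since $\{1,\omega_1,\ldots,\omega_{d+1}\}$ are linearly independent in $H^s(\Sph^d)$. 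The inverse function theorem applied to $\nabla F_{1+R}$ then yields a neighborhood of $(1,0)$, uniform in all such $R$, in which $(1,0)$ is the only critical point of $F_{1+R}$. Since the global minimizer $(c_n,\zeta_n)$ is a critical point converging to $(1,0)$, eventually $(c_n,\zeta_n)=(1,0)$, contradicting $h_n\neq 1$. The main technical obstacle is precisely the uniformity of this neighborhood in $R$, which reduces to continuous dependence of $F_{1+R}$ and its derivatives on $R$ in the $H^s$-norm.
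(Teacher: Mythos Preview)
Your proof is correct, and for parts (a)--(c) it is essentially equivalent to the paper's. The paper writes down the completed square
\[
\mathcal E_s[U-c\,Q_\zeta] = \mathcal E_s[U] - \frac{\mathcal E_s[Q_\zeta,U]^2}{\mathcal E_s[Q_\zeta]} + \mathcal E_s[Q_\zeta]\Bigl(c-\frac{\mathcal E_s[Q_\zeta,U]}{\mathcal E_s[Q_\zeta]}\Bigr)^2
\]
and uses it to optimize out $c$ explicitly, reducing everything to the single variable $\zeta$; you instead work directly with first-order optimality in $(c,\zeta)$. These are the same computation in different clothing. (Your argument for strictness in (a) is actually more explicit than the paper's appeal to ``nondegeneracy of $\mathcal E_s$''.)

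For part (d) the two arguments genuinely diverge. The paper, having already reduced to the supremum over $\zeta$ alone, splits the ball into $\{|\zeta|<\epsilon_2\}$ (where the implicit function theorem on $\nabla_\zeta\int Q_\zeta^{q-1}(1+R)\,d\omega$ gives uniqueness of the critical point) and $\{|\zeta|\geq\epsilon_2\}$ (where a quantitative H\"older inequality shows the integral is strictly smaller than at $\zeta=0$). You instead argue by contradiction: a sequence $R_n\to 0$ with minimizers $h_n\neq 1$ must have parameters $(c_n,\zeta_n)\to(1,0)$, and then the uniformly positive definite Hessian forces $(c_n,\zeta_n)=(1,0)$ for large $n$. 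Your route is neater in that the contradiction setup automatically localizes to a neighborhood of $(1,0)$, so no separate ``far-away'' argument is needed; the paper's route is more constructive, yielding in principle an explicit $\epsilon_0$. The uniformity-in-$R$ issue you flag is real but, as you say, reduces to the standard quantitative inverse function theorem once one observes that the third derivatives of $(c,\zeta)\mapsto\mathcal E_s[(1+R)-c\,h_\zeta]$ are bounded on a fixed neighborhood by $C(1+\mathcal E_s[R]^{1/2})$.
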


Before proving this lemma, we use it to complete the proof of \eqref{eq:bereversesphere}.

It follows from Lemma \ref{dist} that, if $R$ satisfies \eqref{eq:beortho}, then for all sufficiently small $\epsilon$ we have $\inf_{h\in\mathcal H} \mathcal E_s[(1+\epsilon R)-h] = \epsilon^2 \mathcal E_s[R]$. Combining this with \eqref{eq:beuppernum} we find
$$
\lim_{\epsilon\to 0} \frac{\mathcal E_s[1+\epsilon R] - S_{d,s} \| 1+\epsilon R \|_q^2}{\inf_{h\in\mathcal H} \mathcal E_s[1+\epsilon R-h]}
= \frac{\langle R,\mathcal L_s R \rangle }{\mathcal E_s[R]} \,.
$$
At this point we can choose $R$ so as to minimize the right side. We will show below that
$$
\inf \left\{ \frac{\langle R,\mathcal L_s R \rangle }{\mathcal E_s[R]}:\ R \ \text{satisfies}\ \eqref{eq:beortho} \right\} = \frac{4s}{d+2s+2} \,.
$$
The argument given there also shows that the infimum is attained if and only if $R$ is a spherical harmonic of degree 2, so this is the optimal choice of $R$. This completes the proof of \eqref{eq:bereversesphere} and, therefore, of \eqref{eq:bereverse}.

\begin{proof}[Proof of Lemma \ref{dist}]
	We recall that the elements in $\mathcal H$ are of the form $c\,Q_\zeta$ with $c\in\R$ and $\zeta\in\R^{d+1}$ with $|\zeta|<1$. Here we set $Q_\zeta(\omega)= (\sqrt{1-|\zeta|^2}/(1-\zeta\cdot\omega))^{(d-2s)/2}$. For later purposes we record the normalizations 
	\begin{equation}
		\label{eq:qzetanorm}
		\|Q_\zeta\|_q^q = |\Sph^d| \,,
		\qquad
		\mathcal E_s[Q_\zeta] = \tfrac{\Gamma(\frac d2+s)}{\Gamma(\frac d2-s)}\, |\Sph^d|
	\end{equation}
	and the Euler--Lagrange equation 
	\begin{equation}
		\label{eq:qzetael}
		\mathcal E_s[V,Q_\zeta] = \tfrac{\Gamma(\frac d2+s)}{\Gamma(\frac d2-s)}\, \int_{\Sph^d} V Q_\zeta^{q-1} \,d\omega
	\qquad\text{for all}\ V\in H^s(\Sph^d) \,,
	\end{equation}
	Indeed, the first equality in \eqref{eq:qzetanorm} follows from the characterization of $Q_\zeta$ as a Jacobian in the previous lecture and the second one from the minimality, which also gives the Euler--Lagrange equation.
	
	\medskip
	
	(a) We write $\mathcal E_s[\cdot,\cdot]$ for the bilinear form associated to the quadratic form $\mathcal E_s[\cdot]$. Since
	$$
	\mathcal E_s[U-c\,Q_\zeta] = \mathcal E_s[U] - \frac{\mathcal E_s[Q_\zeta,U]^2}{\mathcal E_s[Q_\zeta]} + \mathcal E[Q_\zeta] \left( c - \frac{\mathcal E_s[Q_\zeta,U]}{\mathcal E_s[Q_\zeta]} \right)^2 \,,
	$$
	it follows that
	\begin{equation}
		\label{eq:distoptsphere}
		\delta[U]^2 = \inf_{c,\zeta} \mathcal E_s[U-c\,Q_\zeta] = \mathcal E_s[U] - \sup_\zeta \frac{\mathcal E_s[Q_\zeta,U]^2}{\mathcal E_s[Q_\zeta]}
	\end{equation}
	and that for each $\zeta$ the optimal $c$ is given by $c=\mathcal E_s[Q_\zeta,U]/\mathcal E_s[Q_\zeta]$. Item (a) follows immediately from \eqref{eq:distoptsphere} and the nondegeneracy of $\mathcal E_s$.
	
	\medskip
	
	(b) In view of \eqref{eq:qzetanorm} and \eqref{eq:qzetael} we can write \eqref{eq:distoptsphere} as
	\begin{equation}
		\label{eq:distoptsphere2}
		\delta[U]^2 = \mathcal E_s[U] - \tfrac{\Gamma(\frac d2+s)}{\Gamma(\frac d2-s)}\, |\Sph^d|^{-1} \sup_\zeta \left( \int_{\Sph^d} Q_\zeta^{q-1}U \,d\omega \right)^2.
	\end{equation}
	It is easy to see that $\zeta\mapsto \int_{\Sph^d} Q_\zeta^{q-1}U \,d\omega$ is continuous and tends to zero as $|\zeta|\to 1$, and therefore the supremum over $\zeta$ is attained, as claimed.
	
	Moreover, if $U\neq 0$ and if the infimum is attained at $h = c_0\,Q_{\zeta_0}$, then, recalling the expression for the optimal $c=c_0$, 
	$$
	\sup_\zeta \frac{\mathcal E_s[Q_\zeta,U]^2}{\mathcal E_s[Q_\zeta]}
	= \frac{\mathcal E_s[Q_{\zeta_0},U]^2}{\mathcal E_s[Q_{\zeta_0}]} = c_0^2\, \mathcal E_s[Q_{\zeta_0}] = \mathcal E_s[h] \,.
	$$
	Thus, by \eqref{eq:distoptsphere}, $\delta[U]^2 = \mathcal E_s[U] - \mathcal E_s[h]$ and so
	$$
	S_{d,s}\|U-h\|_q^2 \leq \mathcal E_s[U-h] = \delta[U]^2 =\tau_U^2 \left( \mathcal E_s[U] - \delta[U]^2 \right) = \tau_U^2 \mathcal E_s[h] = \tau_U^2 S_{d,s} \|h\|_q^2 \,,
	$$
	as claimed.
	
	\medskip
	
	(c) We assume now that the infimum is attained at $h=1$, that is, at $(c,\zeta)=(1,0)$. Then $1=c=\mathcal E_s[1,U]/\mathcal E_s[1] = |\Sph^d|^{-1}\int_{\Sph^d} U\,d\omega$, where we used \eqref{eq:qzetanorm} and \eqref{eq:qzetael}. This proves the first equality in \eqref{eq:beortho}. Moreover, by \eqref{eq:distoptsphere2},
	$$
	\nabla_\zeta \Big|_{\zeta=0} \int_{\Sph^d} Q_\zeta^{q-1} U\,d\omega =0 \,,
	$$
	which gives the second equality in \eqref{eq:beortho}.
	
	\medskip
	
	(d) We prove now conversely that for sufficiently small $R$, the validity of the orthogonality conditions implies that the distance is attained at the function $1$.
	
	To prove this, we apply the implicit function theorem and find $\epsilon_1,\epsilon_2>0$ such that for $R\in H^s(\Sph^d)$ with $\mathcal E_s[R]<\epsilon_1$ there is a unique $\zeta\in B_{\epsilon_2}(0)$ such that $\nabla_\zeta \int_{\Sph^d} Q_\zeta^{q-1} (1+R)\,d\omega =0$. (The invertibility of the relevant matrix in the application of the implicit function theorem follows by a lengthy, but straightforward computation. Indeed, $D^2_\zeta \big|_{\zeta=0} \int_{\Sph^d} Q_\zeta^{q-1} \,d\omega$ is a nonzero multiple of the identity matrix.) Since, by assumption, the condition $\nabla_\zeta \int_{\Sph^d} Q_\zeta^{q-1} (1+R)\,d\omega =0$ is satisfied at $\zeta=0$, we infer that, when restricted to $B_{\epsilon_2}$, the supremum in \eqref{eq:distoptsphere2} is attained at $\zeta=0$.
	
	Let us show that, by decreasing $\epsilon_1$ if necessary, we can ensure that it is not attained outside of $B_{\epsilon_2}$. We note that
	$$
	\eta:= 1- |\Sph^d|^{-1} \sup_{|\zeta|\geq \epsilon_2} \left| \int_{\Sph^d} Q_\zeta^{q-1}\,d\omega \right| >0 \,.
	$$
	Indeed, by H\"older we have $|\cdot|\leq \|Q_\zeta\|_q^{q-1}|\Sph^d|^{1/q} = |\Sph^d|$ with equality if and only if $Q_\zeta$ is a constant. Since $|\zeta|\geq\epsilon_2$, $Q_\zeta$ cannot be a constant, and by continuity we deduce $\eta>0$. (We note that this argument can be made quantitative via the quantitative version of H\"older's inequality in \cite{CaFrLi}.) Thus, if $|\zeta|\geq \epsilon_2$,
	$$
	\left| \int_{\Sph^d} Q_\zeta^{q-1} (1+R)\,d\omega \right| \leq (1-\eta)|\Sph^d| + \|Q_\zeta\|_q^{q-1}\|R\|_q \leq (1-\eta)|\Sph^d| + |\Sph^d|^{(q-1)/q} S_{d,s}^{-1/2} \mathcal E_s[R]^{1/2}
	$$
	Thus, if $|\zeta|\geq\epsilon_2$ and $\mathcal E_s[R]\leq S_{d,s}|\Sph^d|^{2/q}\eta^2$, then
	$$
	\left| \int_{\Sph^d} Q_\zeta^{q-1} (1+R)\,d\omega \right| \leq |\Sph^d| = \left| \int_{\Sph^d} Q_0^{q-1} (1+R)\,d\omega \right|.
	$$
	This means that in \eqref{eq:distoptsphere2} the supremum can be restricted to $|\zeta|<\epsilon_2$, where it is attained at the origin, as we have seen. This proves (d) with $\epsilon_0:=\min\{\epsilon_1, S_{d,s}|\Sph^d|^{2/q}\eta^2\}$.	
\end{proof}


\subsection*{The lower bound}

We now turn to the proof of Theorem \ref{be}. The main step of the proof is contained in the following proposition, where we abbreviate (suppressing the $s$-dependence)
$$
\delta[u] := \inf_{g\in\mathcal G} \|(-\Delta)^{s/2}(u-g)\|_2 \,.
$$
Also, we write
$$
\tau_u:= \delta[u]/\sqrt{\|(-\Delta)^{s/2}u\|_2^2 - \delta[u]^2}
\qquad\text{if}\ 0\neq u\in \dot H^s(\R^d) \,.
$$
By (a) in Lemma \ref{dist} and conformal invariance, one sees that $\tau_u$ is well defined.

\begin{proposition}\label{beprop}
	Let $0<s<\frac d2$ and $q:=\frac{2d}{d-2s}$. Then, for all $0\neq u\in\dot H^s(\R^d)$,
	$$
	\|(-\Delta)^{s/2} u\|_2^2 - S_{d,s} \| u \|_q^2 - \tfrac{4s}{d + 2s +2}\,\delta[u]^2 \gtrsim - \left( \tau_u^{\min\{q-2,1\}} +  \tau_u^{q-2} \right) \delta[u]^2 \,.
	$$
\end{proposition}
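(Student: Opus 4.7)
My plan is to translate the statement to the sphere using the conformal equivalence from Lecture~2 and then expand the Sobolev functional around the closest optimizer.

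\textbf{Setup.} Let $U\in H^s(\Sph^d)$ correspond to $u$ via \eqref{eq:uU}. By Lemma~\ref{dist}(b), the infimum defining $\delta[U]$ is attained at some $h_0=c_0 Q_{\zeta_0}\in\mathcal{H}$ with $c_0\neq 0$. Since $Q_{\zeta_0}$ is (a power of) the Jacobian of a conformal transformation $\Psi_0$ of $\Sph^d$, the function $(h_0)_{\Psi_0^{-1}}$ is the constant $c_0$. Using conformal invariance of $\mathcal{E}_s$, $\|\cdot\|_q$, and $\mathcal{H}$, I may replace $U$ by $U_{\Psi_0^{-1}}$ and assume WLOG that the closest optimizer is a constant $h=c$. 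Writing $U=c+R$, the minimality of $h$ together with the argument of Lemma~\ref{dist}(c) yields $P_0R=0$, $P_1R=0$, and $\delta[U]^2=\mathcal{E}_s[R]$.

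\textbf{Expansion and spectral gap.} From $P_0R=0$ one has $\mathcal{E}_s[U]=c^2\mathcal{E}_s[1]+\mathcal{E}_s[R]$, whence $\mathcal{E}_s[U]-S_{d,s}\|U\|_q^2=\mathcal{E}_s[R]-S_{d,s}(\|U\|_q^2-\|h\|_q^2)$. Setting $W=R/c$ and applying the pointwise Taylor-type bound
\begin{equation*}
\bigl|\,|1+t|^q-1-qt-\tfrac{q(q-1)}{2}t^2\,\bigr|\leq C\bigl(|t|^{\min\{3,q\}}+|t|^q\bigr)\qquad(t\in\R,\ q\geq 2),
\end{equation*}
together with $\int_{\Sph^d} W\,d\omega=0$ (which kills the linear term), the concavity bound $(1+x)^{2/q}\leq 1+(2/q)x$ for $x>-1$, and H\"older on the compact sphere, I obtain
\begin{equation*}
\|U\|_q^2-\|h\|_q^2\leq (q-1)|\Sph^d|^{2/q-1}\|R\|_2^2+C\,\|h\|_q^{2-q}|c|^q\bigl(\|W\|_q^{\min\{3,q\}}+\|W\|_q^q\bigr).
\end{equation*}
Using the formula for $S_{d,s}$ from Theorem~\ref{charmin} and the functional equation of $\Gamma$, a direct computation gives $S_{d,s}(q-1)|\Sph^d|^{2/q-1}=\Gamma(1+d/2+s)/\Gamma(1+d/2-s)=:\lambda_1$, precisely the eigenvalue of $A_s$ on degree-one spherical harmonics. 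Since $P_0R=P_1R=0$, the spectral analysis of $\mathcal{L}_s$ from Lecture~2 yields
\begin{equation*}
\mathcal{E}_s[R]-\lambda_1\|R\|_2^2=\langle R,\mathcal{L}_s R\rangle\geq \bigl(1-\lambda_1/\lambda_2\bigr)\mathcal{E}_s[R]=\tfrac{4s}{d+2s+2}\,\mathcal{E}_s[R],
\end{equation*}
with $\lambda_\ell=\Gamma(\ell+d/2+s)/\Gamma(\ell+d/2-s)$, the minimum being attained at $\ell=2$ by \eqref{eq:gammamono}.

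\textbf{Error in terms of $\tau_U$ and main difficulty.} Combining the two bounds yields
\begin{equation*}
\mathcal{E}_s[U]-S_{d,s}\|U\|_q^2\geq \tfrac{4s}{d+2s+2}\,\delta[U]^2-C\,\|h\|_q^{2-q}|c|^q\bigl(\|W\|_q^{\min\{3,q\}}+\|W\|_q^q\bigr).
\end{equation*}
From the definition of $\tau_U$ and $\mathcal{E}_s[h]=S_{d,s}\|h\|_q^2$, one has $\mathcal{E}_s[R]=\tau_U^2\mathcal{E}_s[h]$; hence by the Sobolev inequality on the sphere $\|W\|_q=\|R\|_q/|c|\lesssim\tau_U$ and $\|h\|_q^{2-q}|c|^q\asymp\|h\|_q^2\asymp\delta[U]^2/\tau_U^2$. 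Substituting, the remainder becomes $\lesssim \delta[U]^2\bigl(\tau_U^{\min\{3,q\}-2}+\tau_U^{q-2}\bigr)=\delta[U]^2\bigl(\tau_U^{\min\{1,q-2\}}+\tau_U^{q-2}\bigr)$, which is the claim. The main obstacle is an \emph{exact} coefficient match: $S_{d,s}(q-1)|\Sph^d|^{2/q-1}$ must equal precisely $\lambda_1$, since any discrepancy would leave an uncontrolled quadratic term and destroy the sharp coefficient $\tfrac{4s}{d+2s+2}$ produced by the spectral gap of $\mathcal{L}_s$. A secondary subtlety is that the Taylor remainder must carry both regimes $|t|^{\min\{3,q\}}$ (small $|t|$) and $|t|^q$ (large $|t|$), so as to produce both exponents in the final error; a cruder bound would give only the weaker $\tau_U^{q-2}$.
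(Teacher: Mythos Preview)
Your proof is correct and follows essentially the same route as the paper: transfer to the sphere, use conformal invariance to reduce to the case where the nearest optimizer is a constant, Taylor-expand $\|U\|_q^2$ to second order with the same two-regime remainder, and identify the quadratic term via the spectral gap of $\mathcal L_s$ on $(\ran(P_0+P_1))^\perp$. Your verification that $S_{d,s}(q-1)|\Sph^d|^{2/q-1}=\Gamma(1+\tfrac d2+s)/\Gamma(1+\tfrac d2-s)$ and your control of the remainder via $\|R\|_q\lesssim\tau_U|c|$ from Lemma~\ref{dist}(b) match the paper's computations exactly.
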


\begin{proof}
	Using the stereographic projection, we cast the inequality in the proposition into an equivalent inequality on the sphere. Namely, for $0\neq U\in H^s(\Sph^d)$,
	\begin{equation}
		\label{eq:bepropsphere}
		\mathcal E_s[U] - S_{d,s} \| U \|_q^2 - \tfrac{4s}{d+2s+2}\, \delta[U]^2
		\gtrsim - \left( \tau_U^{\min\{q-2,1\}} +  \tau_U^{q-2} \right) \delta[U]^2 \,.
	\end{equation}
	By Lemma \ref{dist} the infimum $\delta[U]$ is attained, and by conformal invariance we may assume that it is attained at a constant function $c$. We write
	$$
	U = c + R
	$$
	and recall that $R$ satisfies the orthogonality conditions \eqref{eq:beortho}.

	Using the elementary inequality
	$$
	\left| |a|^q - |b|^q - q |b|^{q-2} b(a-b) - \tfrac12 q(q-1) |b|^{q-2}(a-b)^2 \right|\lesssim |b|^{(q-3)_+} |a-b|^{\min\{q,3\}} + |a-b|^q \,,
	$$
	valid for all $a,b\in\R$, together with the first condition in \eqref{eq:beortho}, we find
	$$
	\left| \int_{\Sph^d} |U|^q\,d\omega - |c|^q |\Sph^d| - \tfrac12 q(q-1) |c|^{q-2} \int_{\Sph^d} R^2\,d\omega \right| \lesssim |c|^{(q-3)_+} \|R\|_{q}^{\min\{q,3\}} + \|R\|_q^q \,.
	$$
	Using the elementary inequality
	$$
	(1+t)^{2/q} \leq 1 + \tfrac 2q\, t \,,
	$$
	valid for all $t\geq 0$, we deduce
	$$
	\| U \|_q^2 \leq |\Sph^d|^{2/q} c^2 + (q-1) |\Sph^d|^{-1+2/q}\! \int_{\Sph^d} R^2\,d\omega + \const |c|^{2-q} \left( |c|^{(q-3)_+} \|R\|_{q}^{\min\{q,3\}} + \|R\|_q^q \right).
	$$
	Meanwhile, by \eqref{eq:qzetael} and \eqref{eq:beortho},
	$$
	\mathcal E_s[U] = c^2\, \mathcal E_s[1] + \mathcal E_s[R] \,.
	$$
	Combining the previous two relations and recalling $\mathcal E_s[1] = S_{d,s}|\Sph^d|^{2/q}$ yields
	\begin{align}
		\label{eq:beexp}
		\mathcal E_s[U] - S_{d,s} \|U\|_q^2 & \geq \mathcal E_s[R] - (q-1) \mathcal E_s[1] |\Sph^d|^{-1} \int_{\Sph^d} R^2\,d\omega \notag \\
		& \quad - \const |c|^{2-q} \left( |c|^{(q-3)_+} \|R\|_{q}^{\min\{q,3\}} + \|R\|_q^q \right).
	\end{align}
	In terms of the decomposition of $R$ into spherical harmonics, the quadratic terms on the right side are equal to
	\begin{align*}
		\mathcal E_s[R] - (q-1) \mathcal E_s[1] |\Sph^d|^{-1}\! \int_{\Sph^d} R^2\,d\omega 
		& = \sum_{\ell=0}^\infty \left( \frac{\Gamma(\ell+\frac d2+s)}{\Gamma(\ell+\frac d2-s)} - 
		(q-1) \mathcal E_s[1] |\Sph^d|^{-1} \right) \| P_\ell R\|_2^2 \\
		& = \sum_{\ell=0}^\infty \left( \frac{\Gamma(\ell+\frac d2+s)}{\Gamma(\ell+\frac d2-s)} - 
		\frac{\Gamma(1+\frac d2+s)}{\Gamma(1+\frac d2-s)} \right) \|P_\ell R\|_2^2 \,.
	\end{align*}
	We now recall that, according to \eqref{eq:beortho},
	$$
	P_0 R = P_1 R = 0 \,,
	$$
	so the above sum can be restricted to $\ell\geq 2$. It follows from \eqref{eq:gammamono} that
	\begin{align*}
		\frac{\Gamma(\ell+\frac d2+s)}{\Gamma(\ell+\frac d2-s)} - 
		\frac{\Gamma(1+\frac d2+s)}{\Gamma(1+\frac d2-s)} 
		& \geq \left( 1 - \frac{\Gamma(2+\frac d2-s)}{\Gamma(2+\frac d2+s)}
		\frac{\Gamma(1+\frac d2+s)}{\Gamma(1+\frac d2-s)} \right)
		\frac{\Gamma(\ell+\frac d2+s)}{\Gamma(\ell+\frac d2-s)} \\
		& = \frac{2s}{1+\frac d2+s} \,
		\frac{\Gamma(\ell+\frac d2+s)}{\Gamma(\ell+\frac d2-s)} \,.
	\end{align*}
	To summarize, we have shown that
	$$
	\mathcal E_s[R] - (q-1) \mathcal E_s[1] |\Sph^d|^{-1} \int_{\Sph^d} R^2\,d\omega
	\geq \frac{2s}{1+\frac d2+s} \,\mathcal E_s[R] = \frac{2s}{1+\frac d2+s} \,\delta[U]^2 \,.
	$$
	
	It remains to deal with the remainder terms in \eqref{eq:beexp}. Using Sobolev's inequality and the inequality $\|R\|_q \leq\tau_U |\Sph^d|^{1/q}|c|$ from (b) in Lemma \ref{dist}, we find
	\begin{align*}
		|c|^{2-q} \left( |c|^{(q-3)_+} \|R\|_{q}^{\min\{q,3\}} + \|R\|_q^q \right)
		& \lesssim |c|^{2-q} \left( |c|^{(q-3)_+} \|R\|_{q}^{\min\{q-2,1\}} + \|R\|_q^{q-2} \right) \delta[U]^2 \\
		& \lesssim \left( \tau_U^{\min\{q-2,1\}} + \tau_U^{q-2} \right) \delta[U]^2 \,.
	\end{align*}
	This leads to the claimed bound \eqref{eq:bepropsphere}.
\end{proof}

\begin{proof}[Proof of Theorem \ref{be}]
	We argue by contradiction, assuming the claimed inequality would not hold. Then there is a sequence $(u_n)\subset\dot H^s(\R^d)$ such that
	\begin{equation}
		\label{eq:becontra}
		\frac{\|(-\Delta)^{s/2}u_n\|_2^2 - S_{d,s}\|u_n\|_q^2}{\delta[u_n]^2}\to 0 \,.
	\end{equation}
	By homogeneity we may assume that $\|(-\Delta)^{s/2}u_n\|_2=1$. Then the Sobolev inequality and the inequality $\delta[u_n]\leq \|(-\Delta)^{s/2}u_n\|_2 = 1$ imply
	$$
	0 \leq 1 - S_{d,s}\|u_n\|_q^2 \leq \frac{\|(-\Delta)^{s/2}u_n\|_2^2 - S_{d,s}\|u_n\|_q^2}{\delta[u_n]^2} \,.
	$$
	By \eqref{eq:becontra} we deduce that $\|u_n\|_q^2\to S_{d,s}^{-1}$. It follows from Lions's theorem proved in the first lecture that $\delta[u_n]\to 0$. Using this information in the inequality in Proposition \ref{beprop}, we obtain
	$$
	\liminf_{n\to\infty} \frac{\|(-\Delta)^{s/2}u_n\|_2^2 - S_{d,s}\|u_n\|_q^2}{\delta[u_n]^2} \geq \frac{4s}{d+2s+2} \,.
	$$
	This contradicts \eqref{eq:becontra} and completes the proof of Theorem \ref{be}.
\end{proof}

\subsection*{Bibliographical remarks}

The main theorem in this lecture for $s=1$, as well as the strategy employed for general $s$, are due to Bianchi and Egnell \cite{BiEg}. They answered a question posed by Brezis and Lieb \cite{BrLi}. The result for general $s$, as well as the observation to use conformal invariance, appeared in \cite{ChFrWe}; earlier results in the local case (that is, for integer $s$) are in \cite{LuWe,BaWeWi}.

The basic ingredients of the Bianchi--Egnell method are a compactness theorem for optimizing sequences and the fact that all zero modes of the Hessian come from symmetries. For functional inequalities for which these two ingredients are available there is a good chance that the Bianchi--Egnell method can be applied. This has been carried out in a large number of cases; see, for instance, the introduction of \cite{Fr} for references.

We do not attempt to give an overview over the works on the stability problem of functional inequalities in the last two decades. Let us just mention the works \cite{FuMaPr,FiMaPr,CiLe} on the isoperimetric inequality, which had a huge impact on the field, as well as the surveys \cite{Fi1,Fi2,DoEs}. Related to the topic of these lectures, we mention the stability result \cite{Ca} for the HLS inequality. Indeed, this is deduced from the main result of the present lecture together with a quantitative version of the duality argument used in the proof of Lemma \ref{dual} in the appendix of the first lecture.

A stability result for the Sobolev inequality in the case $s=1$ appears in \cite{BoDoNaSi}. It is of a somewhat different flavor than Theorem \ref{be} and has an explicit constant in the bound, at the expense of being only valid for functions $u$ with sufficiently fast decay.

After the lectures at the summer school on which these notes are based, there have been some developments concerning the stability theorem, which we briefly describe. Let us denote the optimal constant in the stability theorem by
$$
c^{\rm BE}_{d,s} := \inf_{u\not\in\mathcal G} \frac{\|(-\Delta)^{s/2} u\|_2^2 - S_{d,s}\|u\|_q^2}{\inf_{g\in\mathcal G} \|(-\Delta)^{s/2}(u-g)\|_2^2} \,.
$$
Note that the proof presented in this lecture used compactness and only showed that $c^{\rm BE}_{d,s}$ is positive, without giving any lower bound. In the case $s=1$, the paper \cite{DoEsFiFrLo} provided for the first time an explicit lower bound on $c^{\rm BE}_{d,1}$. This was achieved by replacing the use of Lions's compactness theorem by a more precise argument based on rearrangement methods, using both a discrete and a continuous symmetrization flow. In fact, in \cite{DoEsFiFrLo} it was shown that
$$
c^{\rm BE}_{d,1}\gtrsim d^{-1} \,,
$$
which is optimal with respect to its large $d$-behavior in view of the upper bound $c^{\rm BE}_{d,1}\leq 4/(d+4)$ from \eqref{eq:bereverse}. This was achieved by cutting the remainder $R$ in a suitable way and using the above Taylor expansion of the nonlinearity only where $R$ is sufficiently small compared to $c$. The first part of the argument, namely that giving an explicit lower bound, extends to general $s$, as shown in \cite{ChLuTa}. The optimal behavior of the constant with respect to $d$ for $s=1$ leads to a quantitative version of the logarithmic Sobolev inequality \cite{DoEsFiFrLo}.

Another development occurred in \cite{Ko1,Ko2}, where it was shown that for general $s$ the constant $c_{d,s}^{\rm BE}$ is attained. This involves, among other things, showing that the inequality in \eqref{eq:bereverse} is strict and uses a compactness argument, reminiscent of but much more intricate than those presented in Lecture 1.


\section*{Lecture 4: Nondegenerate and degenerate stability}

In this final lecture we discuss the following one-parameter family of Sobolev-type inequalities
\begin{equation*}
	\int_0^T \int_{\Sph^{d-1}} \left( |\partial_t u |^2 + |\nabla_\omega u|^2 + \tfrac{(d-2)^2}{4} u^2\right)d\omega\,dt \geq S_d(T) \left( \int_0^T \int_{\Sph^{d-1}} |u|^q \,d\omega\,dt \right)^{2/q},
\end{equation*}
valid for functions $u\in H^1((\R/T\Z)\times\Sph^{d-1})$. Here, as always,
$$
d\geq 3
\qquad\text{and}\qquad
q = \tfrac{2d}{d-2} \,.
$$
We abbreviate $\Sigma_T:=(\R/T\Z)\times\Sph^{d-1}$, $dv_g=d\omega\,dt$, $\|u\|_q:= \|u\|_{L^q(\Sigma_T,v_g)}$ and, for $u\in H^1(\Sigma_T)$,
$$
\mathcal E_T[u] := \int_{\Sigma_T} \left( |\partial_t u |^2 + |\nabla_\omega u|^2 + \tfrac{(d-2)^2}{4} u^2\right)dv_g \,.
$$
(There should be no risk of confusing this with $\mathcal E_s[U]$ from the previous two lectures.) By $S_d(T)$ we denote the optimal constant in the above inequality, that is,
\begin{equation}
	\label{eq:l4sob}
	S_d(T) := \inf_{0\neq u\in H^1(\Sigma_T)} \frac{\mathcal E_T[u]}{\|u\|_q^2} \,.
\end{equation}

Attention to these inequalities was drawn by Schoen in connection with the Yamabe problem. A remarkable feature is the existence of a critical parameter
$$
T_* := \tfrac{2\pi}{\sqrt{d-2}}
$$
such that for $T\leq T_*$ the infimum in \eqref{eq:l4sob} is attained precisely at constants, while for $T>T_*$ it is attained at a nonconstant function that is independent of the variable $\omega$. Our goal is to show that for $T\neq T_*$ one has a quadratic stability similar as in the previous lecture, while for $T=T_*$ one only has a quartic stability.

We denote by
$$
\mathcal G_T:= \left\{ g\in H^1(\Sigma_T) :\ \mathcal E_T[g] = S_d(T) \|g\|_q^2 \right\}
$$
the set of all optimizers (and zero).

The main result of this lecture is the following stability theorem for \eqref{eq:l4sob}.

\begin{theorem}\label{stabt}
	Let $d\geq 3$, $q=\frac{2d}{d-2}$ and $T>0$. Then, for all $u\in H^1(\Sigma_T)$,
	$$
	\mathcal E_T[u] -S_d(T)\|u\|_q^2 \gtrsim
	\begin{cases}
		\inf_{g\in\mathcal G_T} \mathcal E_T[u-g] & \text{if}\ T\neq T_* \,, \\
		\inf_{g\in\mathcal G_T} \frac{\mathcal E_T[u-g]^2}{\mathcal E_T[u]} & \text{if}\ T=T_* \,.
	\end{cases}
	$$
\end{theorem}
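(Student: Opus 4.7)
The plan is to adapt the Bianchi--Egnell compactness argument of Lecture~3 to the compact cylinder $\Sigma_T$, inserting a Lyapunov--Schmidt / fourth-order expansion at the critical parameter. The first ingredient is a compactness theorem for optimizing sequences of \eqref{eq:l4sob}, analogous to Theorem~\ref{comp}; on the compact manifold $\Sigma_T$ the only loss of compactness is concentration at a point, so the refined Sobolev / missing-mass argument of Lecture~1 applies after localization. Assuming the stated stability inequality fails, I would extract a sequence $(u_n)\subset H^1(\Sigma_T)$ with $\mathcal E_T[u_n]=1$, $\|u_n\|_q^2\to S_d(T)^{-1}$ and vanishing deficit-to-distance ratio. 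Compactness together with the symmetries of the problem (rescaling, and $t$-translation when $T>T_*$) yields $u_n=g_n+r_n$, with $g_n\in\mathcal G_T$ the optimizer realizing the distance and $r_n\to 0$ in $H^1(\Sigma_T)$.

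Near $g_n$ the deficit is governed at leading order by the Hessian
\[
\mathcal L_{g_n} := -\partial_t^2-\Delta_\omega+\tfrac{(d-2)^2}{4}-(q-1)\,\tfrac{\mathcal E_T[g_n]}{\|g_n\|_q^q}\,|g_n|^{q-2}
\]
(plus a rank-one correction which vanishes on admissible $r_n$). For $T\neq T_*$ the zero modes of $\mathcal L_{g_n}$ on the admissible subspace come solely from symmetries---rescaling at every $T$, and $t$-translation when $T>T_*$---and are eliminated by the orthogonality conditions, giving a uniform spectral gap. A Taylor expansion of the deficit, with cubic and higher remainders controlled by Sobolev and H\"older exactly as in Proposition~\ref{beprop}, then yields $\mathcal E_T[u_n]-S_d(T)\|u_n\|_q^2\gtrsim \mathcal E_T[r_n]$, contradicting the assumption.

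At $T=T_*$ the optimizer is the constant $g_*\equiv c$ and $\mathcal L_{g_*}=-\partial_t^2-\Delta_\omega-(d-2)$ on the complement of the constants. Since the eigenvalues of $-\partial_t^2-\Delta_\omega$ on $\Sigma_T$ are $(2\pi n/T)^2+\ell(\ell+d-2)$, an extra eigenvalue of $\mathcal L_{g_*}$ crosses zero precisely at $T_*=2\pi/\sqrt{d-2}$, with kernel spanned by $\cos(2\pi t/T_*)$ and $\sin(2\pi t/T_*)$. Decompose $r=r_\parallel+r_\perp$ along and transverse to this extra kernel. The quadratic analysis still gives $\mathcal E_{T_*}[u]-S_d(T_*)\|u\|_q^2\gtrsim \mathcal E_{T_*}[r_\perp]$, which is insufficient along $r_\parallel$. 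By translation invariance one reduces to $r_\parallel=s\phi$ with $\phi=\cos(2\pi t/T_*)$ and then carries out a Lyapunov--Schmidt reduction: solve $\mathcal L_{g_*}v=-\pi_\perp\bigl(\tfrac12 q(q-1)c^{q-2}\phi^2\bigr)$ for the resonant second-order correction, insert $r=s\phi+s^2 v+\dots$, and compute the coefficient of $s^4$ in $\mathcal E_{T_*}[c+r]-S_d(T_*)\|c+r\|_q^2$. Because $\phi^2=\tfrac12(1+\cos(4\pi t/T_*))$ projects only onto the two eigenmodes $(n,\ell)=(0,0)$ and $(2,0)$, whose eigenvalues of $\mathcal L_{g_*}$ are $-(d-2)$ and $3(d-2)$, the correction $v$ is explicit and the net coefficient is a strictly positive multiple of $\|\phi\|_4^4$. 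Combining the quartic lower bound in $r_\parallel$ with the quadratic bound in $r_\perp$, absorbing mixed terms via Young, and translating back to $\inf_{g\in\mathcal G_{T_*}}\mathcal E_{T_*}[u-g]$ gives the quartic stability: deficit $\gtrsim \mathcal E_{T_*}[r_\perp]+\mathcal E_{T_*}[r_\parallel]^2\gtrsim (\mathcal E_{T_*}[r_\perp]+\mathcal E_{T_*}[r_\parallel])^2$ since $\mathcal E_{T_*}[r]$ is small, which is the desired bound once normalized by $\mathcal E_{T_*}[u]$.

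The hardest part will be the explicit sign verification of the fourth-order coefficient at $T=T_*$: the direct quartic contribution $\tfrac{1}{4!}q(q-1)(q-2)(q-3)c^{q-4}\int\phi^4\,dv_g$ and the indirect contribution from $v$ partially cancel, and one must check that the residual is strictly positive---equivalently, that the bifurcation at $T_*$ is supercritical. A secondary technical point is to establish uniformity of the spectral gap as $T\to T_*$ in the non-degenerate case, so that the contradiction argument produces a limiting optimizer consistent with the regime being considered; one way around this is to run the argument separately in the three regimes $T<T_*$, $T=T_*$, $T>T_*$, which keeps the spectral gap bounded below within each regime and isolates the degenerate direction in the critical case.
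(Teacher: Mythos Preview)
Your overall strategy matches the paper's: compactness of optimizing sequences on $\Sigma_T$ (the paper does this via Aubin's inequality $\mathcal E_T[v]\ge(1-\epsilon)S_d\|v\|_q^2-C_\epsilon\|v\|_2^2$ together with the strict bound $S_d(T)<S_d$, rather than a refined Sobolev inequality, but the effect is the same), spectral analysis of the Hessian $\mathcal L_T$, a second-order expansion with spectral gap $c_T>0$ for $T\ne T_*$, and a fourth-order analysis at $T=T_*$ exploiting what the paper calls the \emph{secondary nondegeneracy condition}. The paper organizes the critical case as a completion of the square in $r_\perp$ (writing $\langle r_\perp,\mathcal L_{T_*}r_\perp\rangle-2\epsilon^2\langle f_*,r_\perp\rangle$ and subtracting $\epsilon^4\langle P^\perp f_*,(\mathcal L_{T_*}^\perp)^{-1}P^\perp f_*\rangle$) rather than as an implicit-function Lyapunov--Schmidt reduction, but these are two descriptions of the same computation, and your identification of the relevant mode $\cos(4\pi t/T_*)$ with $\mathcal L_{T_*}$-eigenvalue $3(d-2)$ is exactly what the paper uses.

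One technical step you omit, which the paper isolates explicitly (Step~1 of the critical-case proposition), is the preliminary reduction to functions depending only on $t$. For $d\ge 5$ one has $q<4$, so $t\mapsto|t|^q$ is not $C^4$ and a fourth-order Taylor expansion of $\|cu_*+r\|_q^q$ is not available without pointwise smallness of $r/c$. The paper first peels off the $\omega$-dependent part $r'$ of $r$: since $r'$ is $\mathcal E_{T_*}$-orthogonal both to constants and to the extra kernel $\{\cos(2\pi\,\cdot/T_*),\sin(2\pi\,\cdot/T_*)\}$, it sees a genuine spectral gap and contributes quadratically; the remaining $t$-dependent piece then enjoys the one-dimensional embedding $H^1(\R/T_*\Z)\hookrightarrow L^\infty$, which justifies the fourth-order expansion with remainder $O(|c|^{-3}\epsilon^5+|c|^{-1}\epsilon\|r_\perp\|_\infty^2)$. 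Your Lyapunov--Schmidt picture is consistent with this (the curve $s\mapsto cu_*+s\phi+s^2v$ lies in $L^\infty$), but the lower bound for \emph{general} $r_\perp$ away from the optimal correction still needs control of the higher-order remainders, and the paper's reduction to the $t$-variable is the cleanest way to obtain it. With that step inserted, your sketch is the paper's proof. (A minor terminological point: there is no dilation symmetry on the compact cylinder; the zero mode you call ``rescaling'' is the homogeneity mode $u_*$ itself.)
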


Of course, the constant implicit in the $\gtrsim$ depends on $T$.

We will also show that the order of vanishing given by the theorem is optimal. That is, we will show that
\begin{equation}
	\label{eq:stabtupper}
	\inf_{u\not\in\mathcal G_T} \frac{\mathcal E_T[u] -S_d(T)\|u\|_q^2}{\inf_{g\in\mathcal G_T} \mathcal E_T[u-g]} \leq c_T
\end{equation}
with a certain constant $c_T<\infty$ defined in \eqref{eq:defct} below. Moreover, at $T=T_*$ we have $c_{T_*}=0$ and we show
\begin{equation}
	\label{eq:stabtuppercrit}
	\inf_{u\not\in\mathcal G_{T_*}} \frac{\mathcal E_{T_*}[u] \left( \mathcal E_{T_*}[u] -S_d(T_*)\|u\|_q^2\right)}{\inf_{g\in\mathcal G_{T_*}} \mathcal E_{T_*}[u-g]^2} \leq \frac{(q+2)(q-2)}{12(q-1)} \,.
\end{equation}
The bounds \eqref{eq:stabtupper} and \eqref{eq:stabtuppercrit} imply that one cannot have a better stability result than a quadratic one if $T\neq T_*$ and a quartic one if $T=T_*$.

We emphasize that we refer to \eqref{eq:stabtupper} (resp.\ \eqref{eq:stabtuppercrit}) as quadratic (resp.\ quartic) stability, since the term $\inf_{g\in\mathcal G_T} \mathcal E_T[u-g]$ vanishes quadratically as $u$ approaches $\mathcal G_T$.

The basic strategy to prove Theorem \ref{stabt} is the same as that in the previous lectures: We prove a compactness theorem, classify the optimizers and the zero modes of their Hessian and then we put these ingredients together. (We will not give a full proof of the classification of optimizers, but refer to the literature at some points.) For $T\neq T_*$ this works in a straightforward way. The case $T=T_*$, however, is different since there is a zero mode of the Hessian that does \emph{not} come from symmetries. This is responsible for the quartic behavior and on a technical level necessitates a certain iteration of the basic strategy, which we will explain.


\subsection*{Optimizing sequences}

We begin by proving relative compactness of optimizing sequences for the optimization problem \eqref{eq:l4sob}.

\begin{proposition}\label{exoptpert}
	Let $T>0$. Let $(u_n)\subset H^1(\Sigma_T)$ with $\mathcal E_T[u_n]=1$ and $\|u_n\|_q^2 \to S_d(T)^{-1}$. Then there is a subsequence that converges in $H^1(\Sigma_T)$ to an optimizer of \eqref{eq:l4sob}.
\end{proposition}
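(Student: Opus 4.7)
The plan is to adapt the two-step scheme of Theorem \ref{comp} to the compact manifold $\Sigma_T$. The compactness of $\Sigma_T$ is a substantial simplification: there are no dilations or translations producing sequences that weakly converge to zero, and the only mechanism for loss of compactness is concentration at a point, which asymptotically returns one to the Euclidean problem with constant $S_{d,1}$. Accordingly, the only genuinely new input I will need is the strict inequality $S_d(T)<S_{d,1}$.

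\smallskip

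First I would extract a weakly convergent subsequence $u_n\rightharpoonup u$ in $H^1(\Sigma_T)$ from the $H^1$-bound implicit in $\mathcal{E}_T[u_n]=1$, and use Rellich--Kondrachov to upgrade this to strong convergence in every $L^p(\Sigma_T)$ with $p<q$; in particular $u_n\to u$ in $L^2$. Setting $r_n=u_n-u$, the Brezis--Lieb lemma gives $\|u_n\|_q^q=\|u\|_q^q+\|r_n\|_q^q+o(1)$, and the Hilbert structure of $\mathcal{E}_T$ together with strong $L^2$-convergence of $r_n$ yields
\begin{equation*}
\mathcal{E}_T[u_n]=\mathcal{E}_T[u]+\mathcal{E}_T[r_n]+o(1),\qquad \mathcal{E}_T[r_n]=\|\nabla_{t,\omega}r_n\|_2^2+o(1).
\end{equation*}
Combined with $\mathcal{E}_T[u]\ge S_d(T)\|u\|_q^2$, $\mathcal{E}_T[r_n]\ge S_d(T)\|r_n\|_q^2$ and the elementary inequality $(a+b)^{2/q}\le a^{2/q}+b^{2/q}$ (valid since $q\ge 2$), the chain-of-equalities argument from Step 2 of Lecture 1 forces either $\|u\|_q=0$ or $\lim\|r_n\|_q=0$. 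In the latter case, equality in the subadditivity also gives $\lim\mathcal{E}_T[r_n]=0$ and $\mathcal{E}_T[u]=1$, so $u$ is an optimizer and, by coercivity of $\mathcal{E}_T$ on $H^1$, $u_n\to u$ strongly in $H^1(\Sigma_T)$.

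\smallskip

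It will remain to rule out the vanishing case $u=0$. Here I would use two ingredients. The first is the Aubin-type local Sobolev inequality on the compact manifold $\Sigma_T$: for every $\epsilon>0$ there is $C_\epsilon<\infty$ with
\begin{equation*}
\|v\|_q^2\le (S_{d,1}^{-1}+\epsilon)\,\|\nabla_{t,\omega} v\|_2^2 + C_\epsilon\,\|v\|_2^2\qquad \text{for all } v\in H^1(\Sigma_T),
\end{equation*}
proved by a partition-of-unity reduction to the sharp Euclidean Sobolev inequality on small coordinate charts. The second is the strict inequality $S_d(T)<S_{d,1}$, which I would verify by explicit test-function calculations: the constant $u\equiv 1$ gives ratio $\tfrac{(d-2)^2}{4}(T|\Sph^{d-1}|)^{2/d}$, strictly below $S_{d,1}=\tfrac{d(d-2)}{4}|\Sph^d|^{2/d}$ for $T$ small, while for $T$ large I would use a $t$-dependent test function modelled on the ground state of the one-dimensional focusing problem on $\R$ and check the ratio directly. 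With both ingredients in hand, if $u=0$ then $u_n\to 0$ in $L^2$ by Rellich, so $\mathcal{E}_T[u_n]=\|\nabla_{t,\omega} u_n\|_2^2+o(1)\to 1$; applying Aubin to $u_n$ and passing to the limit gives $S_d(T)^{-1}\le S_{d,1}^{-1}+\epsilon$, and letting $\epsilon\to 0$ contradicts $S_d(T)<S_{d,1}$.

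\smallskip

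The main obstacle is producing test functions that establish the strict inequality $S_d(T)<S_{d,1}$ uniformly for all $T>0$: no single test function works in every regime, so one must combine the small-$T$ and large-$T$ constructions and verify the inequality case-by-case. Everything else is a standard concentration-compactness argument in the compact-manifold setting, close in spirit to the Yamabe-problem analysis.
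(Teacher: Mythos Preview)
Your approach is correct and follows essentially the same route as the paper: weak compactness, Brezis--Lieb splitting, Aubin's local Sobolev inequality, and the strict inequality $S_d(T)<S_{d,1}$. Two points of comparison are worth making. First, the paper applies Aubin's inequality directly to the remainder $r_n$ (rather than to $u_n$ in a separate vanishing case), obtaining $t\ge S_{d,1}\,m^{2/q}$ with the \emph{Euclidean} constant; this allows a single chain of inequalities that simultaneously forces $t=0$ and $\mathcal E_T[u]=S_d(T)\|u\|_q^2$, without first passing through a dichotomy. Your two-step organization is fine, just slightly less economical. Second, and more useful to you: the strict inequality $S_d(T)<S_{d,1}$ does \emph{not} require a case split in $T$. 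The single test function $Q(t)=\cosh^{-(d-2)/2}(t-T/2)$ (the Euclidean Sobolev optimizer in logarithmic coordinates, restricted to one period) works for every $T>0$: multiplying the identity $-Q''+\tfrac{(d-2)^2}{4}Q=\tfrac{d(d-2)}{4}Q^{q-1}$ by $Q$ and integrating over $(0,T)$ gives a strictly negative boundary term $Q'(T/2)Q(T/2)$ plus a bulk term strictly below $S_{d,1}$ (since the $Q^q$ integral over $(0,T)$ is strictly smaller than over $\R$). So what you flag as the main obstacle is in fact a one-line computation.
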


Note that, in contrast to the corresponding theorem in Lecture 1, here there are no noncompact symmetries that could lead to a loss of compactness.

The proof of the proposition relies on the following strict upper bound on $S_d(T)$. We denote by $S_d:=S_{d,1}$ the constant from the first three lectures.

\begin{lemma}\label{strict}
	For all $T>0$, $S_d(T)<S_d$.
\end{lemma}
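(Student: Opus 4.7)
The plan is to use the Aubin--Talenti bubble itself as a test function, after identifying it with an $\omega$-independent profile on the cylinder. Via the conformal change of variables $x=e^t\omega$, $u(x)=e^{-t(d-2)/2}v(t,\omega)$, the Sobolev problem on $\R^d$ is equivalent to the minimization of $\mathcal E_\infty[v]/\|v\|_q^2$ on $\R\times\Sph^{d-1}$, with the same optimal constant $S_d$, and the standard optimizer $Q(x)=(2/(1+|x|^2))^{(d-2)/2}$ pulls back to $v_*(t):=(\cosh t)^{-(d-2)/2}$. By direct computation, $v_*$ solves the ODE $-v_*''+\tfrac{(d-2)^2}{4}v_*=\mu\,v_*^{q-1}$ with $\mu:=\tfrac{d(d-2)}{4}$. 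Since $v_*$ is even, $v_*(-T/2)=v_*(T/2)$, so its restriction to $[-T/2,T/2]$ descends to a Lipschitz, hence $H^1$, function on $\Sigma_T$. (It has a derivative jump at the identified endpoints, but this does not affect the weak gradient, which is still in $L^\infty$.) This gives a legitimate test function.

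Next, I would introduce the full-line quantities $I_E:=\int_{\R}(v_*'^2+\tfrac{(d-2)^2}{4}v_*^2)\,dt$ and $I_q:=\int_{\R}v_*^q\,dt$, which satisfy $I_E=\mu I_q$ (from the ODE and integration by parts) and $S_d=|\Sph^{d-1}|^{1-2/q}\,I_E/I_q^{2/q}$ (since $v_*$ optimizes on the full cylinder). I would also introduce the tail quantities $E_T:=2\int_{T/2}^\infty(v_*'^2+\tfrac{(d-2)^2}{4}v_*^2)\,dt$ and $N_T:=2\int_{T/2}^\infty v_*^q\,dt$, so that $\mathcal E_T[v_*]=|\Sph^{d-1}|(I_E-E_T)$ and $\|v_*\|_{L^q(\Sigma_T)}^q=|\Sph^{d-1}|(I_q-N_T)$.

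The decisive step is a Pohozaev-type identity on a half-line: multiplying the ODE by $v_*$ and integrating by parts on $[T/2,\infty)$ produces a strictly positive boundary term,
\[
E_T-\mu N_T=-2\,v_*(T/2)\,v_*'(T/2)=(d-2)\,v_*(T/2)^2\,\tanh(T/2)>0
\qquad\text{for all } T>0,
\]
where I used $v_*'(t)=-\tfrac{d-2}{2}v_*(t)\tanh t$. Combined with the trivial bounds $I_q-N_T<I_q$ and $1-2/q=2/d>0$, this yields
\[
\frac{\mathcal E_T[v_*]}{\|v_*\|_{L^q(\Sigma_T)}^2}=|\Sph^{d-1}|^{1-2/q}\,\frac{I_E-E_T}{(I_q-N_T)^{2/q}}
<|\Sph^{d-1}|^{1-2/q}\,\mu(I_q-N_T)^{1-2/q}
<|\Sph^{d-1}|^{1-2/q}\,\mu I_q^{1-2/q}=S_d.
\]
The first strict inequality uses $E_T>\mu N_T$ and the second uses $N_T>0$, so both hold uniformly in $T>0$. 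The only genuinely delicate point is the Pohozaev computation with boundary term at a finite point; everything else is routine bookkeeping around the conformal equivalence and the observation that $v_*$, while only Lipschitz on $\Sigma_T$, still has a well-defined $L^2$ weak gradient.
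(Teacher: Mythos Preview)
Your proof is correct and takes essentially the same approach as the paper: both use the profile $v_*(t)=(\cosh t)^{-(d-2)/2}$ (the paper writes it as $Q(\cdot-T/2)$) as a test function on $\Sigma_T$, invoke the ODE $-v_*''+\tfrac{(d-2)^2}{4}v_*=\tfrac{d(d-2)}{4}v_*^{q-1}$, and integrate by parts to produce the boundary term $v_*'(T/2)v_*(T/2)$ which furnishes the strict inequality. Your bookkeeping via the tail quantities $E_T,N_T$ and the identity $E_T-\mu N_T=-2v_*'(T/2)v_*(T/2)$ is merely a rephrasing of the paper's direct computation on $[0,T/2]$; the two are algebraically equivalent, and your remark that the test function is only Lipschitz at the seam (hence still in $H^1$) is a correct observation that the paper leaves implicit.
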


\begin{proof}[Proof of Lemma \ref{strict}]
	Let $Q(t):= \cosh^{-(d-2)/2}t$ and note that
	$$
	-Q'' + (\tfrac{d-2}{2})^2\, Q = \tfrac{d(d-2)}{4}\, Q^{(d+2)/(d-2)} 
	\qquad\text{in}\ \R \,.
	$$
	Taking $t\mapsto Q(\cdot - T/2)$, considered as a function on $\Sigma_T$, as a trial function, we obtain
	\begin{align*}
		S_d(T) & \leq \frac{2|\Sph^{d-1}| \int_0^{T/2} ((Q')^2 + (\frac{d-2}{2})^2 Q^2)\,dt}{\left( 2 |\Sph^{d-1}| \int_0^{T/2} Q^q\,dt \right)^{2/q}} \\
		& = \frac{d(d-2)}{4} \left( 2 |\Sph^{d-1}| \int_0^{T/2} Q^q\,dt \right)^{1-2/q} + \frac{2|\Sph^{d-1}| Q'(T/2)Q(T/2)}{\left( 2 |\Sph^{d-1}| \int_0^{T/2} Q^q\,dt \right)^{2/q}} \,.
	\end{align*}
	The boundary term is $<0$ since $Q$ is positive and decreasing on $(0,\infty)$, and the bulk term is
	$$
	< \frac{d(d-2)}{4} \left( 2 |\Sph^{d-1}| \int_0^\infty Q^q\,dt \right)^{1-2/q} = S_d \,.
	$$
	The last equality can be seen either by explicit computation and comparison with the value of $S_d$, or by noting that the $Q$ in this proof coincides with the $Q$ in the characterization of optimizers of $S_d$ in introducing logarithmic coordinates (see, for instance, \eqref{eq:logcoord} below).
\end{proof}

\begin{proof}[Proof of Proposition \ref{exoptpert}]
	We proceed similarly as in Step 2 of the proof of the main theorem in Lecture 1. After passing to a subsequence, we may assume that $u_n\rightharpoonup u$ in $H^1(\Sigma_T)$. We write
	$$
	u_n = u + r_n
	\qquad\text{with}\ r_n\rightharpoonup 0 \ \text{in}\ H^1(\Sigma_T) \,.
	$$
	By the same arguments as in Lecture 1, we deduce that
	\begin{equation}
		\label{eq:mm1l4}
		t:= \lim_{n\to\infty} \mathcal E_T[r_n]
		\qquad\text{exists and satisfies}\qquad
		1 = \mathcal E_T[u] + t
	\end{equation}
	and
	\begin{equation}
		\label{eq:mm2l4}
		m:= \lim_{n\to\infty} \| r_n\|_q^q
		\qquad\text{exists and satisfies}\qquad
		S_{d}(T)^{-q/2} = \| u \|_q^q + m \,. 
	\end{equation}
	In contrast to Lecture 1, we will argue more carefully when estimating $t$ from below by $m$. We will show that
	\begin{equation}
		\label{eq:mm3l4}
		t \geq S_{d} m^{2/q} \,,
	\end{equation}
	where $S_d:=S_{d,1}$ is the constant from the first three lectures. We emphasize that \eqref{eq:mm3l4} with $S_d(T)$ instead of $S_d$ would be immediate. Since $S_d>S_d(T)$ by Lemma \ref{strict}, \eqref{eq:mm3l4} as it stands is an improvement of this immediate bound, and this improvement will be crucial in our proof.
	
	Inequality \eqref{eq:mm3l4} follows from an inequality of Aubin, which says that for any $\epsilon>0$ there is a $C_{\epsilon,T}<\infty$ such that for all $v\in H^1(\Sigma_T)$,
	\begin{equation}
		\label{eq:aubin}
		\mathcal E_T[v] \geq (1-\epsilon) S_d\|v\|_q^2 - C_{\epsilon,T} \|v\|_2^2 \,.
	\end{equation}
	In fact, Aubin's inequality is valid on any closed Riemannian manifold $(\mathcal M,g)$ of dimension $d\geq 3$, provided the left side is replaced by $\int_\mathcal M \left( |\nabla v|^2_g + \frac{d-2}{4(d-1)}R_g v^2\right) dv_g$, where $R_g$ denotes the scalar curvature. For the proof of \eqref{eq:aubin} one covers the manifold by finitely many balls, whose radii are so small that in each ball the metric is Euclidean `up to an $\epsilon$'. Then one localizes to these balls using a partition of unity and applies in each ball the Euclidean Sobolev inequality. The error term involving $C_{\epsilon,T}$ comes from the localization error. 
	
	For the proof of \eqref{eq:mm3l4} we apply \eqref{eq:aubin} to $v=r_n$. Since $r_n\rightharpoonup 0$ in $H^1(\Sigma_T)$ implies $r_n\to 0$ in $L^2(\Sigma_T)$, we deduce that $t\geq (1-\epsilon) S_d m^{2/q}$. Since $\epsilon>0$ is arbitrary, we obtain \eqref{eq:mm3l4}.
	
	We now deduce the proposition from \eqref{eq:mm1l4}, \eqref{eq:mm2l4} and \eqref{eq:mm3l4}. We find
	\begin{align*}
		1 & = \mathcal E_T[u] + t \geq \mathcal E_T[u] + S_d(T) m^{2/q} + (1-S_d(T)/S_d) t \\
		& = \mathcal E_T[u] + (1- S_d(T)^{q/2}\|u\|_q^q)^{2/q} + (1-S_d(T)/S_d) t \\
		& \geq \mathcal E_T[u] + 1- S_d(T) \|u\|_q^2 + (1-S_d(T)/S_d) t \,, 
	\end{align*}
	where we used the same elementary inequality \eqref{eq:elem} as in Lecture 1. Using the strict inequality $S_d(T)<S_d$ we deduce that
	\begin{equation}
		\label{eq:exoptpert}
		\mathcal E_T[u] = S_d(T)\|u\|_q^2
		\qquad\text{and}\qquad
		t=0 \,.
	\end{equation}
	Because of \eqref{eq:mm1l4} we deduce from the second condition in \eqref{eq:exoptpert} that $r_n\to 0$ in $H^1(\Sigma_T)$, that is, $u_n\to u$ in $H^1(\Sigma_T)$. In particular, $u\neq 0$ and, by the first condition in \eqref{eq:exoptpert}, $u$ is an optimizer. This completes the proof.
\end{proof}

Note that there is a certain analogy between the above proof of Proposition \ref{exoptpert} and the proof of Theorem \ref{comp} that was presented in the appendix to Lecture 2. In both cases the compactness comes from an improved constant in front of the $L^q$ term at the expense of adding an $L^2$ term.


\subsection*{Optimizers}

According to the previous proposition, for any $T$ there is an optimizer $u_*$ for the optimization problem \eqref{eq:l4sob}. 

We claim that either $u_*\geq 0$ or $u_*\leq 0$. To see this, we recall that by Sobolev space theory, the positive and negative parts $(u_*)_\pm$ of $u_*$ belong to $H^1(\Sigma_T)$ and $\|\nabla u_*\|_2^2 = \|\nabla (u_*)_+\|_2^2 + \|\nabla (u_*)_-\|_2^2$. Thus, if neither $(u_*)_+$ nor $(u_*)_-$ vanish almost everywhere, then
$$
S_d(T) = \frac{\mathcal E_T[u_*]}{\|u_*\|_q^2} = \theta^{2/q} \frac{\mathcal E_T[(u_*)_+]}{\|(u_*)_+\|_q^2} + (1-\theta)^{2/q} \frac{\mathcal E_T[(u_*)_-]}{\|(u_*)_-\|_q^2} \geq \left( \theta^{2/q} + (1-\theta)^{2/q} \right) S_d(T)
$$
with
$$
\theta := \frac{\|(u_*)_+\|_q^q}{\|(u_*)_+\|_q^q + \|(u_*)_-\|_q^q} \,.
$$
Since $0<\theta<1$ and $q>2$ we have $\theta^{2/q}+(1-\theta)^{2/q}>1$, a contradiction. Thus, after changing the sign of $u_*$ if necessary, we may assume that $u_*\geq 0$.

\medskip

The Euler--Lagrange equation of the optimization problem is
$$
-\partial_t^2 u_* -\Delta_\omega u_* + \tfrac{(d-2)^2}{4}\, u_* - \tfrac{\mathcal E_T[u_*]}{\|u_*\|_q^q}\, u_*^{q-1} = 0
\qquad\text{on}\ \Sigma_T \,.
$$
If we define a function $U_*$ on $\R^d\setminus\{0\}$ by
\begin{equation}
	\label{eq:logcoord}
	U_*(x):= |x|^{-(d-2)/2}\, u_*(\ln |x|, x/|x|) \,,
\end{equation}
then, by a straightforward computation,
$$
-\Delta U_* = \tfrac{\mathcal E_T[u_*]}{\|u_*\|_q^q}\, U_*^{q-1}
\qquad\text{in}\ \R^d\setminus\{0\} \,.
$$
Moreover, the singularity at the origin is nonremovable in the sense that
$$
\int_{B_\epsilon(0)} U_*^q\,dx = \infty
\qquad\text{for all}\ \epsilon>0 \,.
$$
(Indeed, the integral
$$
\int_{r<|x|<e^T r} U_*^q\,dx = \| u_* \|_q^q
$$
is independent of $r>0$.) Therefore, one is in position to apply a theorem of Caffarelli, Gidas and Spruck \cite{CaGiSp}, which implies that $U_*$ is radially symmetric about the origin. Equivalently, the function $u_*$ is independent of $\omega$.

Normalizing $u_*$ such that $\tfrac{\mathcal E_T[u_*]}{\|u_*\|_q^q} = \frac{d(d-2)}{4}$, one is therefore led to the ODE
$$
-\partial_t^2 u + \tfrac{(d-2)^2}{4}\, u - \tfrac{d(d-2)}4\, u^{q-1} = 0
\qquad\text{in}\ \R \,,
$$
which can be studied by phase-plane analysis. The equation has the constant solution 
$$
u_0:=((d-2)/d)^{(d-2)/4}
$$
as well as the homoclinic solution $\cosh^{-(d-2)/2}$. For any $\alpha\in(u_0,1)$ there is a unique solution $u= u_\alpha$ with $u(0)=\alpha$ and $u'(0)=0$. This solution is positive and periodic with a certain minimal period $\tau(\alpha)$ and it is symmetric about $t=0$ and decreasing on $[0,\tau(\alpha)/2]$. It is known that $\alpha\mapsto \tau(\alpha)$ is continuous and monotone increasing (see, e.g., \cite{CaChRu}) with
$$
\lim_{\alpha\to u_0} \tau(\alpha) = \frac{2\pi}{\sqrt{d-2}} = T_*
\qquad\text{and}\qquad
\lim_{\alpha\to 1} \tau(\alpha) = \infty \,.
$$

Returning to our solution $u_*$, which is $T$-periodic, we conclude immediately that $u_*=u_0$ if $T\leq T_*$. Assume now $T>T_*$. We deduce from the above analysis that there is an $s\in\R/T\Z$ and a $k\in\N$ with $k<T/T_*$ such that $u_*(t) = u_\alpha(t-s)$, where $\alpha\in(u_0,1)$ is uniquely determined by $k$ via $T = k\tau(\alpha)$. If $T\leq 2T_*$, we have necessarily $k=1$. If $T>2T_*$, a priori more than one value of $k$ is possible, but, using a variational argument based on the stability of the solutions \cite[Section 2, p.\ 134]{Sc}, one can show that the minimizer necessarily has $k=1$. To summarize, we have shown that
\begin{align*}
	\mathcal G_T =
	\begin{cases}
		\{ c:\ c\in\R \} & \text{if}\ T\leq T_* \,, \\
		\{ c\, u_{\tau^{-1}(T)}(\cdot - s) :\ c\in\R \,,\ s\in\R/T\Z \} & \text{if}\ T>T_* \,.
	\end{cases}
\end{align*}
Here $\tau^{-1}:(T_*,\infty)\to(u_0,1)$ denotes the inverse of $\tau$, which exists by the strict monotonicity of $\tau$.


\subsection*{Zero modes of the Hessian}

Having characterized the optimizers, we next turn our attention to the zero modes of the Hessian. As before we work in the normalization $u_*\geq 0$ and $\tfrac{\mathcal E_T[u_*]}{\|u_*\|_q^q} = \frac{d(d-2)}{4}$, so that $u_*$ solves
\begin{equation}
	\label{eq:eql4}
	-\partial_t^2 u_* + \tfrac{(d-2)^2}{4}\, u_* - \tfrac{d(d-2)}4\, u_*^{q-1} = 0
	\qquad\text{in}\ \R \,.
\end{equation}
The Hessian of our optimization problem is the operator
\begin{align*}
	\mathcal L_T & := -\partial_t^2 -\Delta_\omega + \tfrac{(d-2)^2}{4} - (q-1)\, \tfrac{d(d-2)}4\, u_*^{q-2} + (q-2)\,\tfrac{d(d-2)}{4}\, \|u\|_q^{-q} \left| u_*^{q-1}\rangle\langle u_*^{q-1} \right| \\
	& = -\partial_t^2 -\Delta_\omega + \tfrac{(d-2)^2}{4} - \tfrac{d(d+2)}4\, u_*^{q-2} + d\, \|u\|_q^{-q} \left| u_*^{q-1}\rangle\langle u_*^{q-1} \right|,
\end{align*}
considered as a selfadjoint, lower bounded operator in $L^2(\Sigma_T)$ with form domain $H^1(\Sigma_T)$. Our goal is to show that
\begin{equation}
	\label{eq:kernell}
	\ker \mathcal L_T =
	\begin{cases} \{u_*\} & \text{if}\ T<T_* \,,\\
		\spa\{ u_*,\ \sin( \frac{2\pi}{T_*} \, \cdot ),\, \cos( \frac{2\pi}{T_*}\, \cdot ) \} & \text{if}\ T=T_* \,,\\
		\spa\{ u_*,\ \partial_t u_* \} & \text{if}\ T>T_* \,.
	\end{cases} 
\end{equation}

The fact that $u_*$ is in the kernel comes from the homogeneity of the optimization problem. The fact that $\partial_t u_*$ is in the kernel for $T>T_*$ comes from the fact that translates of an optimizer are again optimizers and that optimizers are not constant. From that perspective the claimed elements in the kernel for $T>T_*$ are natural and the thrust of the assertion in this case lies in the fact that there are no other, linearly independent elements in the kernel. Similarly, in the case $T<T_*$ it is shown that there is no element linearly independent from the natural one. What might be surprising is that at the critical value $T=T_*$ the sine and cosine in the kernel are not related to any symmetry of the problem. This is ultimately the reason why in the main theorem in this lecture the stability exponent is $4$ for $T=T_*$, while it is $2$ for $T\neq T_*$.

\begin{proof}[Proof of \eqref{eq:kernell}]
	For $T\leq T_*$ we have $u_* = u_0 = ((d-2)/d)^{(d-2)/4}$ and therefore
	\begin{equation}
		\label{eq:ltsmallt}
		\mathcal L_T = -\partial_t^2 -\Delta_\omega - (d-2) + (d-2) |\Sigma_T|^{-1} \left| 1\rangle\langle 1 \right|.
	\end{equation}
	The assertion follows easily from the spectral properties of $-\partial_t^2$ and $-\Delta_\omega$.
	
	Now assume $T>T_*$. We have to classify all $\phi\in H^2(\Sigma_T)$ such that $\mathcal L_T\phi=0$. We first note that, as a consequence of \eqref{eq:eql4}, $\mathcal L_T u_*=0$ and $\mathcal L_T \partial_t u_* =0$. Now given $\phi$ as above, we consider
	$$
	\widetilde\phi := \phi + c u_*
	\qquad\text{with}\qquad
	c := \|u_*\|_q^{-q} \int_{\Sigma_T} u_*^{q-1}\phi\,dv_g \,.
	$$
	Then a simple computation shows that
	$$
	\widetilde{\mathcal L}_T \widetilde\phi = 0 
	\qquad\text{with}\qquad
	\widetilde{\mathcal L}_T:= -\partial_t^2 -\Delta_\omega + \tfrac{(d-2)^2}{4} - \tfrac{d(d+2)}4\, u_*^{q-2} \,.
	$$
	To solve this equation, we can expand $\widetilde\phi$ with respect to spherical harmonics in the $\omega$-variable and solve the equation for each fixed degree. This leads to the equations
	$$
	\widetilde{\mathcal L}_{T,\ell} \widetilde\phi_\ell = 0 
	\qquad\text{with}\qquad
	\widetilde{\mathcal L}_{T,\ell}:= -\partial_t^2 + \ell(\ell+d-2) + \tfrac{(d-2)^2}{4} - \tfrac{d(d+2)}4\, u_*^{q-2} \,,
	$$
	parametrized by $\ell\in\N_0$, where now $\widetilde{\mathcal L}_{T,\ell}$ is an operator in $L^2(\R/T\Z)$ with form domain $H^1(\R/T\Z)$.
	
	We begin with $\ell=0$. Differentiating \eqref{eq:eql4} with respect to either $t$ or $\alpha$ (recall that $u_* = u_\alpha$ where $\tau(\alpha)=T$), we find two solutions $\partial_t u_*$ and $\partial_\alpha u_*$ of the equation
	$$
	-v'' + \tfrac{(d-2)^2}{4}v - \tfrac{d(d+2)}4\, u_*^{q-2} v =0 \,.
	$$
	Since $\partial_t u_*(0)=0$ and $\partial_\alpha u_*(0)=1$ (since $u_*(0)=\alpha$), these two solutions are linearly independent. Thus $\widetilde\phi_0$ is a linear combination of these two functions. Differentiating the equation $u_*(t+T)=u_*(t)$ with respect to $\alpha$, we obtain $\partial_\alpha u_*(t+T) = \partial_\alpha u_*(t)  - u_*'(t) \tau'(\alpha)$. Since $\tau'(\alpha)\neq 0$ (see, e.g., \cite{CaChRu}), we see that $\partial_\alpha u_*$ is not periodic, so in fact $\widetilde\phi_0$ is a multiple of $\partial_t u_*$.
	
	Now we consider $\ell=1$. A computation shows that the two functions
	$$
	e^{\pm t} (u_*' \pm \tfrac{d-2}{2} u_*)
	$$
	satisfy the equation
	$$
	-v'' +(d-1)v + \tfrac{(d-2)^2}{4}v - \tfrac{d(d+2)}4\, u_*^{q-2} v =0 \,.
	$$
	At the two infinities, one of them is exponentially growing and one is exponentially decaying. They are clearly linearly independent. Since no nontrivial linear combination of them is periodic, we conclude that $\widetilde\phi_1=0$.
	
	Finally, we consider $\ell\geq 2$. Since $u_*$ is a minimizer, we know that $\mathcal L_T$ is positive semidefinite. Since the rank-one contribution to $\mathcal L_T$ only affects $\ell=0$, we deduce that the operators $\widetilde{\mathcal L}_{T,\ell}$ are positive semidefinite for $\ell\geq 1$. Since $\widetilde{\mathcal L}_{T,1}$ has compact resolvent, the fact that its kernel is trivial implies that it is positive definite. Since $\widetilde{\mathcal L}_{T,\ell}$ with $\ell\geq 2$ differs from $\widetilde{\mathcal L}_{T,1}$ by a positive constant, we deduce that $\widetilde{\mathcal L}_{T,\ell}$ is positive definite as well and, in particular, has trivial kernel.
	
	To summarize, we have shown that $\widetilde\phi = C \partial_t u_*$ for some $C\in\R$, that is, $\phi = -c u_* + C\partial_t u_*$, as claimed.	
\end{proof}

For $T>0$, let
\begin{equation}
	\label{eq:defct}
	c_T:= \inf \left\{ \frac{\langle v,\mathcal L_T v \rangle}{\mathcal E_T[v]} : \ v\in H^1(\Sigma_T) \,,\ \mathcal E_T[u_*,v]=\mathcal E_T[\partial_t u_*,v] =0 \right\}.
\end{equation}
Here $\mathcal E_T[\cdot,\cdot]$ denotes the bilinear form associated to the quadratic form $\mathcal E_T[\cdot]$. We note that for $T\leq T_*$ the function $u_*$ is a constant, so the second orthogonality condition in \eqref{eq:defct} is trivially satisfied in this case.

\begin{lemma}\label{nondeg}
	If $T\leq T_*$, then
	$$
	c_T = \frac{\min\{(\frac{2\pi}{T})^2, d-1\} - (d-2)}{\min\{(\frac{2\pi}{T})^2, d-1\} + (\frac{d-2}{2})^2} \,.
	$$
	In particular, $c_T>0$ if $T<T_*$, and $c_{T_*}=0$. If $T> T_*$, then $c_T>0$.
\end{lemma}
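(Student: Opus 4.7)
The plan is to treat $T\le T_*$ and $T>T_*$ separately, since the structure of the minimizer $u_*$ (and hence of $\mathcal L_T$) is qualitatively different in the two regimes.

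For $T\le T_*$ I would exploit the explicit form \eqref{eq:ltsmallt} of $\mathcal L_T$. Since $u_*=u_0$ is constant, $\partial_t u_*=0$, and the constraint $\mathcal E_T[u_*,v]=0$ reduces after integration by parts to $\int_{\Sigma_T}v\,dv_g=0$. On such $v$ the rank-one projection in \eqref{eq:ltsmallt} vanishes, so
\begin{equation*}
\frac{\langle v,\mathcal L_T v\rangle}{\mathcal E_T[v]}=\frac{\int_{\Sigma_T}\bigl(|\partial_t v|^2+|\nabla_\omega v|^2-(d-2)v^2\bigr)dv_g}{\int_{\Sigma_T}\bigl(|\partial_t v|^2+|\nabla_\omega v|^2+\tfrac{(d-2)^2}{4}v^2\bigr)dv_g}.
\end{equation*}
Expanding $v$ in the joint orthogonal basis of Fourier modes in $t$ and spherical harmonics in $\omega$, the quotient on the mode indexed by $(n,\ell)\in\Z\times\N_0$ collapses to $(\mu_{n,\ell}-(d-2))/(\mu_{n,\ell}+(d-2)^2/4)$ where $\mu_{n,\ell}:=(2\pi n/T)^2+\ell(\ell+d-2)$. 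Since $\mu\mapsto(\mu-(d-2))/(\mu+(d-2)^2/4)$ is strictly increasing in $\mu$, the infimum is attained at the smallest admissible positive $\mu_{n,\ell}$. The constraint rules out $(n,\ell)=(0,0)$ but not $(\pm 1,0)$ or $(0,1)$, so this smallest value is precisely $\min\{(2\pi/T)^2,d-1\}$, and the claimed formula for $c_T$ follows. Sign analysis then gives $c_T>0$ for $T<T_*$ and $c_{T_*}=0$, using that $(2\pi/T)^2\gtreqless d-2$ iff $T\lesseqgtr T_*$, while $d-2<d-1$ always.

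For $T>T_*$ the minimizer $u_*$ is non-constant and an explicit mode calculation is unavailable, so I would turn to an abstract compactness/spectral-gap argument. Writing $\mathcal L_T=(-\partial_t^2-\Delta_\omega+\tfrac{(d-2)^2}{4})-W$ with $W:=\tfrac{d(d+2)}{4}u_*^{q-2}-d\|u_*\|_q^{-q}|u_*^{q-1}\rangle\langle u_*^{q-1}|$, the identity $\langle v,\mathcal L_T v\rangle=\mathcal E_T[v]-\langle v,Wv\rangle$ holds. Since $\mathcal E_T$ is an equivalent inner product on $H^1(\Sigma_T)$, one can define $A:H^1(\Sigma_T)\to H^1(\Sigma_T)$ via the Riesz identity $\mathcal E_T[Av,w]=\langle v,Ww\rangle_{L^2}$. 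Factoring $A=\iota^* W\iota$ through the Rellich-compact embedding $\iota:H^1(\Sigma_T)\hookrightarrow L^2(\Sigma_T)$ and using that $W$ is bounded on $L^2$, we see that $A$ is a compact, $\mathcal E_T$-self-adjoint operator. Since $u_*$ is a minimizer we have $\mathcal L_T\ge 0$, so the spectrum of $A$ lies in $(-\infty,1]$, and its $1$-eigenspace coincides with $\ker\mathcal L_T=\spa\{u_*,\partial_t u_*\}$ by \eqref{eq:kernell}. Compactness forces $1$ to be an isolated eigenvalue of finite multiplicity, hence the supremum of $\mathcal E_T[Av,v]/\mathcal E_T[v]$ over $v$ that are $\mathcal E_T$-orthogonal to this eigenspace (which is exactly the constraint set in \eqref{eq:defct}) is some $\lambda<1$, yielding $c_T=1-\lambda>0$.

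The two cases present quite different difficulties. For $T\le T_*$ the entire argument is essentially a Rayleigh-quotient computation; the only subtlety is identifying the threshold mode and verifying that its contribution crosses zero precisely at $T=T_*$. For $T>T_*$ the spectral-gap argument itself is soft, and the main nontrivial input is the already-established kernel computation \eqref{eq:kernell}: without the knowledge that $\ker\mathcal L_T$ is exactly two-dimensional and accounted for by the homogeneity and translation symmetries, one could not rule out accidental zero modes that would collapse the spectral gap and force $c_T=0$.
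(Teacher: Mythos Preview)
Your argument is correct and, for $T\le T_*$, essentially identical to the paper's: both reduce the constraints to the single mean-zero condition, diagonalize in the product Fourier--spherical-harmonic basis, and optimize the resulting monotone function of the eigenvalue $\mu_{n,\ell}$.

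For $T>T_*$ the paper takes a slightly more direct route. Rather than introduce the compact operator $A$, it observes (using the same Rellich compactness you invoke) that the infimum defining $c_T$ is attained by some $v_*\neq 0$; if $c_T=0$ then $\langle v_*,\mathcal L_T v_*\rangle=0$, whence $\mathcal L_T v_*=0$ by nonnegativity, so $v_*\in\spa\{u_*,\partial_t u_*\}$ by \eqref{eq:kernell}; the $\mathcal E_T$-orthogonality constraints then force $v_*=0$, a contradiction. Your packaging via the spectral theorem for compact self-adjoint operators is cleaner in one respect: by working throughout in the $\mathcal E_T$ inner product you avoid the paper's small detour of translating the $\mathcal E_T$-orthogonality constraints into $L^2$-orthogonality conditions (via the Euler--Lagrange equations for $u_*$ and $\partial_t u_*$) before concluding $v_*=0$. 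Both arguments rest on the same two ingredients you correctly identify as decisive: Rellich compactness and the kernel computation \eqref{eq:kernell}.
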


\begin{proof}
	For $T\leq T_*$, $u_*$ is a constant, so the orthogonality conditions in \eqref{eq:defct} reduce to $\int_{\Sigma_T} v\,dv_g=0$ and the operator $\mathcal L_T$ takes the form \eqref{eq:ltsmallt}. Diagonalizing $-\partial_t^2$ and $-\Delta_\omega$, we see that for $T\leq T_*$
	$$
	c_T = \inf_{k\in\Z,\,\ell\in\N_0,\, (k,\ell)\neq (0,0)} \frac{(\frac{2\pi}{T})^2 k^2 + \ell(\ell+d-2)-(d-2)}{(\frac{2\pi}{T})^2 k^2 + \ell(\ell+d-2)+ (\frac{d-2}{2})^2} \,.
	$$
	The claimed result follows by a simple computation.
	
	For $T\geq T_*$ we argue more qualitatively. It is easy to see that the infimum defining $c_T$ is attained by some $v_*\neq 0$. If we had $c_T=0$, then $\langle v_*,\mathcal L_T v_*\rangle =0$ and therefore, since $\mathcal L_T\geq 0$, $\mathcal L_T v_*=0$. By \eqref{eq:kernell}, $v_*$ is a linear combination of $u_*$ and $\partial_t u_*$. Using the equation for $u_*$, we find $\mathcal E_T[u_*,v] = \frac{d(d-2)}{4} \langle u_*^{q-1},v \rangle$ and $\mathcal E_T[\partial_t u_*,v] = \frac{d(d+2)}{4} \langle u_*^{q-2}\partial_t u_*,v \rangle$. In particular, $v_*$ is $L^2$-orthogonal to $u_*$ and $u_*^{q-2}\partial_t u_*$, which implies that $v_*=0$, a contradiction.
\end{proof}


\subsection*{Nondegenerate stability: The upper bound}

We turn to the question of stability and prove Theorem \ref{stabt}. We begin with the simpler case $T\neq T_*$, where we will show nondegenerate stability in the form of a quadratic bound.

As in the previous lecture, it is instructive to first prove the upper bound, namely \eqref{eq:stabtupper}. As there, we make the ansatz $u=u_*+\epsilon r$ with $r$ to be determined, and we find
$$
\lim_{\epsilon\to 0} \epsilon^{-2} \left( \mathcal E_T[u_*+\epsilon r] - S_d(T)\|u_*+\epsilon r\|_q^2 \right) = \langle r,\mathcal L_T r \rangle \,.
$$
Moreover, arguing as in the proof of Lemma \ref{dist}, we find that, if $r$ satisfies
$$
\mathcal E_T[u_*,r]=\mathcal E_T[\partial_t u_*,r] =0
$$
and if $\epsilon$ is sufficiently small, depending on $r$, then
$$
\inf_{g\in\mathcal G_T} \mathcal E_T[u_*+\epsilon r - g] = \epsilon^2 \mathcal E_T[r] \,.
$$
Thus,
$$
\lim_{\epsilon\to 0} \frac{\mathcal E_T[u_*+\epsilon r] - S_d(T)\|u_*+\epsilon r\|_q^2}{\inf_{g\in\mathcal G_T} \mathcal E_T[u_*+\epsilon r - g] } = \frac{\langle r,\mathcal L_T r \rangle}{\mathcal E_T[r]} \,.
$$
By definition, the infimum over the right side with respect to $r$ gives the constant $c_T$ defined in \eqref{eq:defct}.

This proves the expected result that stability cannot hold with a smaller exponent than 2. (Concerning our counting of the vanishing exponent, we note that $\inf_{g\in\mathcal G_T} \mathcal E_T[u - g]$ vanishes quadratically -- that is, with exponent 2 -- as $u$ approaches $\mathcal G_T$.) Moreover, since $c_{T_*}=0$, the above argument shows that at $T=T_*$ no quadratic stability can hold. We will discuss an upper bound for $T=T_*$ later, but first we prove that for $T\neq T_*$ one does indeed have quadratic stability.


\subsection*{Nondegenerate stability: The lower bound}

We are now ready to give the proof of Theorem \ref{stabt} for $T\neq T_*$. We abbreviate, suppressing the $T$-dependence,
$$
\delta[u] := \inf_{g\in\mathcal G_T} \sqrt{ \mathcal E_T[u-g] } \,.
$$

\begin{proposition}
	Let $T\neq T_*$. Then, for all $0\neq u\in H^1(\Sigma_T)$,
	$$
	\mathcal E_T[u] - S_d(T) \|u\|_q^2 - c_T \, \delta[u]^2 \gtrsim - \left( \tau_u^{\min\{q-2,1\}} + \tau_u^{q-2} \right) \delta[u]^2 \,,
	$$
	where $\tau_u := \delta[u] /\sqrt{\mathcal E_T[u] - \delta[u]^2}$ and where $c_T$ is defined in \eqref{eq:defct}.
\end{proposition}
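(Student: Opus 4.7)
The plan is to mimic the proof of Proposition \ref{beprop} in Lecture 3, with the spectral gap $c_T$ playing the role of the Bianchi--Egnell gap on the sphere. First I would fix an optimizer $g_*\in\mathcal G_T$ attaining $\delta[u]$ (existence by a compactness argument analogous to Lemma \ref{dist}(b)), and use translation invariance to reduce to $g_* = c\, u_*$ for some $c\in\R$, with the distinguished $u_*\geq 0$ solving \eqref{eq:eql4}. Writing $u = c u_* + r$, stationarity in the free parameters of $\mathcal G_T$ at $g_*$ yields
$$
\mathcal E_T[u_*, r] = 0
\qquad\text{and}\qquad
\mathcal E_T[\partial_t u_*, r] = 0,
$$
which are exactly the constraints in the definition \eqref{eq:defct} of $c_T$ (the second is vacuous for $T\leq T_*$, where $u_*$ is constant). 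The first of these gives $\mathcal E_T[r] = \delta[u]^2$ and $\mathcal E_T[g_*] = \mathcal E_T[u] - \delta[u]^2$, and Sobolev's inequality applied to both $g_*$ and $r$ then produces the auxiliary bound $\|r\|_q \leq \tau_u |c|\, \|u_*\|_q$.

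Next I would Taylor-expand $\|u\|_q^q$ around $cu_*$ via the same pointwise inequality used in Proposition \ref{beprop}; the linear term in $r$ drops by the Euler--Lagrange relation $\int u_*^{q-1} r\,dv_g = 0$ (equivalent to $\mathcal E_T[u_*,r]=0$). Combining with the concavity bound $t^{2/q} \leq a^{2/q} + \tfrac{2}{q}\,a^{2/q - 1}(t-a)$ at $a = (c\|u_*\|_q)^q$, together with the normalizations $S_d(T)\|u_*\|_q^2 = \mathcal E_T[u_*]$ and $(q-1)S_d(T)\|u_*\|_q^{2-q} = \tfrac{d(d+2)}{4}$, I obtain
$$
\mathcal E_T[u] - S_d(T)\|u\|_q^2 \;\geq\; \mathcal E_T[r] - \tfrac{d(d+2)}{4}\!\int u_*^{q-2} r^2\,dv_g \;-\; E,
$$
with $E \lesssim c^{\,2-q}\bigl(c^{(q-3)_+}\|r\|_q^{\min\{q,3\}} + \|r\|_q^q\bigr)$. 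The first two terms on the right reassemble into $\langle r,\mathcal L_T r\rangle$, because the rank-one piece of $\mathcal L_T$ annihilates $r$ in view of $\int u_*^{q-1} r = 0$.

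Finally, Lemma \ref{nondeg} combined with the two orthogonality conditions yields the spectral gap $\langle r,\mathcal L_T r\rangle \geq c_T \mathcal E_T[r] = c_T\delta[u]^2$, which is the main term on the right-hand side of the claim. For the remainder, substituting $\|r\|_q \leq \tau_u |c|\|u_*\|_q$ and $c^2 \mathcal E_T[u_*] = \mathcal E_T[u] - \delta[u]^2 = \tau_u^{-2}\delta[u]^2$ converts $E$ into $\lesssim (\tau_u^{\min\{q-2,1\}} + \tau_u^{q-2})\,\delta[u]^2$, noting that $\min\{q-2,1\} = q-2$ when $q\leq 3$ and $= 1$ when $q > 3$. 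The step I expect to be most delicate is justifying that the minimization defining $\delta[u]$ is attained in the smooth interior of the parameter set for $\mathcal G_T$, so that both orthogonality conditions genuinely hold; this requires a mild a priori bound $|c| \gtrsim 1$, which is automatic when $u$ is close to $\mathcal G_T$ by continuity and otherwise renders the proposition trivial via $\delta[u] \leq \sqrt{\mathcal E_T[u]}$.
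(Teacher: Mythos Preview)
Your proposal is correct and follows essentially the same route as the paper's proof: project onto the nearest $g_*=c\,u_*$, extract the two orthogonality conditions from stationarity, Taylor-expand $\|u\|_q^q$ so that the quadratic part becomes $\langle r,\mathcal L_T r\rangle$ (the rank-one term vanishing since $\int u_*^{q-1} r=0$), invoke the definition \eqref{eq:defct} of $c_T$ for the main term, and bound the remainder via $\|r\|_q\leq \tau_u|c|\|u_*\|_q$. Your added care about where $\delta[u]$ is attained and about the normalization constants is welcome but not a departure from the paper's argument.
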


Once we have proved this proposition, we obtain Theorem \ref{stabt} by the same argument as in the previous lecture, based on the relative compactness of optimizing sequences.

\begin{proof}
	It is easy to see that the infimum $\delta[u]$ is attained. After a possible sign change and, in case $T>T_*$, a translation, we may assume that it is attained at $c u_*$, with the normalization $u_*\geq 0$ and $\frac{\mathcal E_T[u_*]}{\|u_*\|_q^q}=\frac{d(d-2)}{4}$. We write
	$$
	U = c u_* + R
	$$
	and observe the orthogonality conditions
	\begin{equation}
		\label{eq:betortho}
		\mathcal E_T[u_*,R] = \mathcal E_T[\partial_t u_*,R] = 0 \,.
	\end{equation}
	Of course, the second condition here is trivial if $T<T_*$ (in which case $u_*$ is constant).
	
	Expanding the $q$-norm as in the previous lecture, we arrive at
	\begin{align*}
		\mathcal E_T[u] - S_d(T)\|u\|_q^2 & \geq \mathcal E_T[R] - (q-1) \int_{\Sigma_T} u_*^{q-2} R^2\,dv_g \\
		& \quad - \const |c|^{2-q} \left( |c|^{(q-3)_+} \|R\|_q^{\min\{q-3\}} + \|R\|_q^q \right).
	\end{align*}
	For the quadratic term we have by definition \eqref{eq:defct} and the orthogonality conditions \eqref{eq:betortho}
	$$
	\mathcal E_T[R] - (q-1) \int_{\Sigma_T} u_*^{q-2} R^2\,dv_g = \langle R,\mathcal L_T R\rangle \geq c_T\, \mathcal E_T[R] = c_T\,\delta[u]^2 \,. 
	$$
	For the remainder term we bound, just like in the previous lecture,
	$$
	\|R\|_q \leq \tau_u \|u_*\|_q |c| \,.
	$$
	Thus, again as in the previous lecture,
	$$
	|c|^{2-q} \left( |c|^{(q-3)_+} \|R\|_q^{\min\{q-3\}} + \|R\|_q^q \right)
	\lesssim \left( \tau_u^{\min\{q-2,1\}} + \tau_u^{q-2} \right) \delta[u]^2 \,.
	$$
	This proves the claimed inequality.
\end{proof}


\subsection*{Degenerate stability: The upper bound}

In the remainder of these lectures, we discuss the degenerate stability in the case $T=T_*$.

We begin with the proof of the upper bound \eqref{eq:stabtuppercrit}, which shows that stability cannot hold with an exponent smaller than 4. If we were only interested in showing this, we could simply take the same trial function $u_*+\epsilon r$ as in the case $T\neq T_*$ and expand the $q$-norm to higher order. It is however instructive and helpful for the understanding of the following proof of Theorem \ref{stabt} to consider a more general family of trial states
$$
u = u_* + \epsilon r_* + \epsilon^2 s \,.
$$
We recall that $u_* = u_0 = ((d-2)/d)^{(d-2)/4}$. The function $r_*$ is chosen so as to minimize $\langle r,\mathcal L_{T_*} r \rangle/\mathcal E_{T_*}[r]$ under the orthogonality condition $\mathcal E_{T_*}[u_*,r]=0$. As shown in \eqref{eq:kernell}, this leads to the choice of $r_*$ as a linear combination of $\sin(\frac{2\pi}{T_*}\,\cdot)$ and $\cos(\frac{2\pi}{T_*}\,\cdot)$. By translation invariance the choice of the linear combination parameters is immaterial, and we choose to take
$$
r_*=\cos(\frac{2\pi}{T_*}\,\cdot) \,.
$$
The function $s$ is our variational parameter that we will optimize over at the end. We assume that
\begin{equation}
	\label{eq:stabtuppercritortho1}
	\mathcal E_{T_*}[u_*,s]=0
\end{equation}
(which is the same as $\int_{\Sigma_T} s\,dv_g =0$) and that
\begin{equation}
	\label{eq:stabtuppercritortho2}
	\mathcal E_{T_*}[r_*,s]=\mathcal E_{T_*}[\partial_t r_*,s] = 0
\end{equation}
(which, using \eqref{eq:stabtuppercritortho1}, is the same as  $\int_{\Sigma_T} r_* s\,dv_g = \int_{\Sigma_T} (\partial_t r_*) s\,dv_g =0$).

The motivation for requiring the first orthogonality condition in \eqref{eq:stabtuppercritortho2} is that, if $s$ contained `a part of $r_*$', we could simply absorb this part into $\epsilon r_*$ by redefining $\epsilon$. The motivation for the second condition is that, if $s$ contained `a part of $\partial_t r_*$', we could essentially absorb this term by translating $r_*$.

The orthogonality condition \eqref{eq:stabtuppercritortho1} guarantees that $\mathcal E_{T_*}[u_*,r_*+\epsilon s] =0$, so as in the proof of Lemma 12 we find that, if $\epsilon$ is sufficiently small depending on ($r_*$ and) $s$,
$$
\inf_{g\in\mathcal G_{T_*}} \mathcal E_{T_*}[u_*+\epsilon r_* + \epsilon^2 s - g] = \mathcal E_{T_*}[\epsilon r_*+\epsilon^2 s] = \epsilon^2 \mathcal E_{T_*}[r_*] + \epsilon^4 \mathcal E_{T_*}[s] \,.
$$
The second equality uses the first orthogonality condition in \eqref{eq:stabtuppercritortho2}.

A lengthy, but straightforward computation shows that
\begin{align*}
	& \lim_{\epsilon\to 0} \epsilon^{-4} \left( \mathcal E_{T_*}[u_* + \epsilon r_* + \epsilon^2 s] - S_d(T_*)\|u_* + \epsilon r_* + \epsilon^2 s\|_q^2\right) \\
	& = \langle s,\mathcal L_{T_*} s \rangle - \tfrac{(d-2)^2}{4} (q-1)(q-2) u_0^{-1} \langle r_*^2,s \rangle
	+ C_*
\end{align*}
with
$$
C_* := \tfrac{(d-2)^2}{4}\, \tfrac{(q-1)(q-2)}{4}\,u_0^{-2} |\Sigma_T| \left( - \tfrac13\,(q-3) \int_{\Sigma_T} r_*^4\,\frac{dv_g}{|\Sigma_T|} + (q-1) \left( \int_{\Sigma_T} r_*^2\,\frac{dv_g}{|\Sigma_T|} \right)^2 \right).
$$
At this point, in order to obtain an upper bound that is as small as possible, we need to solve the optimization problem
$$
\inf\left\{ \langle s,\mathcal L_{T_*} s \rangle - 2 \langle f_*,s \rangle :\ s \ \text{satisfies}\ \eqref{eq:stabtuppercritortho1} \ \text{and}\ \eqref{eq:stabtuppercritortho2} \right\}
$$
with $f_*:= \tfrac{(d-2)^2}{8} (q-1)(q-2) u_0^{-1} r_*^2$. According to \eqref{eq:kernell}, the orthogonality conditions on $s$ mean that $s$ is $L^2$-orthogonal to the kernel of $\mathcal L_{T_*}$. Therefore, denoting by $P^\bot$ the orthogonal projection onto the orthogonal complement of this kernel and by $\mathcal L_{T_*}^\bot$ the restriction of $\mathcal L_{T_*}$ to the range of $P^\bot$, where it is invertible, we can write
$$
\langle s,\mathcal L_{T_*} s \rangle - 2 \langle f_*,s \rangle = \left\| \left(\mathcal L_{T_*}^\bot\right)^{1/2} s - \left(\mathcal L_{T_*}^\bot\right)^{-1/2} P^\bot f_* \right\|_2^2 - \left\langle P^\bot f_*, (\mathcal L_{T_*}^{\bot})^{-1} P^\bot f_* \right\rangle. 
$$
This is minimized by taking $s= (\mathcal L_{T_*}^\bot)^\bot P^\bot f_*$. With this choice one obtains
$$
\lim_{\epsilon\to 0} \epsilon^{-4} \left( \mathcal E_{T_*}[u_* + \epsilon r_* + \epsilon^2 s] - S_d(T_*)\|u_* + \epsilon r_* + \epsilon^2 s\|_q^2 \right) = C_* - \left\langle P^\bot f_*, (\mathcal L_{T_*}^\bot)^{-1} P^\bot f_* \right\rangle .
$$
Consequently,
\begin{align*}
	& \lim_{\epsilon\to 0} \frac{\mathcal E_{T_*}[u_*+\epsilon r_*+\epsilon^2 s] \left(\mathcal E_{T_*}[u_* + \epsilon r_* + \epsilon^2 s] - S_d(T_*)\|u_* + \epsilon r_* + \epsilon^2 s\|_q^2\right)}{\inf_{g\in\mathcal G_{T_*}} \mathcal E_{T_*}[u_*+\epsilon r_* + \epsilon^2 s - g]^2} \\
	& = \frac{\mathcal E_{T_*}[u_*] \left( C_* - \left\langle P^\bot f_*, (\mathcal L_{T_*}^\bot)^{-1} P^\bot f_* \right\rangle\right)}{\mathcal E_{T_*}[r_*]^2} \,.
\end{align*}

The fact that $u_*$ is a minimizer implies that $C_* - \left\langle P^\bot f_*, (\mathcal L_{T_*}^\bot)^{-1} P^\bot f_* \right\rangle\geq 0$. If this difference vanished, then quadratic stability would be violated. A direct computation, however, shows that this difference is strictly positive,
\begin{equation}
	\label{eq:secondarynondeg}
	C_* - \left\langle P^\bot f_*, (\mathcal L_{T_*}^\bot)^{-1} P^\bot f_* \right\rangle > 0 \,.
\end{equation}
We do not give the details of this computation, but as an intermediate step we mention that
$$
C_* = \tfrac{(d-2)^2}{4}\, \tfrac{1}{32}(q-1)(q-2)(q+1)\, |\Sigma_{T_*}|\, u_0^{-2} \,,
$$
as well as
$$
(\mathcal L_{T_*}^\bot)^{-1} P^\bot f_*(t,\omega) = \tfrac{d-2}{48}\, (q-1)(q-2)\,u_0^{-1} \, \cos \tfrac{4\pi}{T_*}t \,,
$$
so
$$
\left\langle P^\bot f_*, (\mathcal L_{T_*}^\bot)^{-1} P^\bot f_* \right\rangle = \tfrac{(d-2)^2}{4}\, \tfrac{1}{96} (q-1)^2(q-2)\, |\Sigma_{T_*}|\, u_0^{-2}
$$
In this way we obtain the claimed value in the upper bound \eqref{eq:stabtuppercrit}.

As we will see in the proof of the lower bound given momentarily, the positivity in \eqref{eq:secondarynondeg} is the reason why we have quartic stability, rather than only stability of a higher order. We think of \eqref{eq:secondarynondeg} as a \emph{secondary nondegeneracy condition}. For $T=T_*$ the primary nondegeneracy condition, which says that elements in the kernel of $\mathcal L_{T_*}$ come from symmetries, is violated, and therefore no quadratic stability can hold. The secondary nondegeneracy condition \eqref{eq:secondarynondeg}, however, is satisfied and therefore one does have quartic stability. It is conceivable that there is a Sobolev-type functional inequality where both the primary and secondary nondegeneracy conditions fail and where the validity of a sextic stability result depends on a tertiary nondegeneracy condition, although no such example is known to the author.


\subsection*{Degenerate stability: The lower bound}

Finally, we sketch the proof of Theorem \ref{stabt} in the case $T=T_*$. As in the previous lecture, given the precompactness of optimizing sequences, it suffices to prove the following asymptotic lower bound, where we set
$$
\tau_u:=\frac{\delta[u]}{\sqrt{\mathcal E_{T_*}[u]-\delta[u]^2}} 
= \frac{\delta[u]}{\frac{d-2}2\, |\Sigma_{T_*}|^{-1/2} |\int_{\Sigma_{T_*}} u\,dv_g|} \,.
$$

\begin{proposition}
	Let $d\geq 3$ and $T=T_*$. Then there is a $\tau_*>0$ such that, for all $0\neq u\in H^1(\Sigma_{T_*})$ with $\int_{\Sigma_{T_*}} u\,dv_g \neq 0$ and $\tau_u\leq\tau_*$,
	$$
	\mathcal E_{T_*}[u] \left( \mathcal E_{T_*}[u] - S_d(T_*)\|u\|_q^2 \right) - \frac{(q+2)(q-2)}{12(q-1)}\, \delta[u]^4 \gtrsim - \tau_u\, \delta[u]^4 \,.
	$$
\end{proposition}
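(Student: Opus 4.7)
The plan is to extend the second-order argument from the nondegenerate case to fourth order, capturing the compensation that makes stability hold even though $\ker\mathcal{L}_{T_*}$ contains the nonsymmetry modes $\sin(\tfrac{2\pi t}{T_*}),\cos(\tfrac{2\pi t}{T_*})$. Since $\mathcal{G}_{T_*}$ is the set of constants, the nearest optimizer is $c:=|\Sigma_{T_*}|^{-1}\int u\,dv_g$, which is nonzero by hypothesis; using the symmetry $u\mapsto -u$ I take $c>0$. Write $u=c+R$ with $\int R\,dv_g=0$, so $\delta[u]^2=\mathcal{E}_{T_*}[R]$ and $\mathcal{E}_{T_*}[u]-\delta[u]^2=\tfrac{(d-2)^2}{4}|\Sigma_{T_*}|c^2$. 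Translation invariance of both the functional and $\mathcal{G}_{T_*}$ lets me shift $t$ so that the projection of $R$ onto $\spa\{\sin(\tfrac{2\pi t}{T_*}),\cos(\tfrac{2\pi t}{T_*})\}$ becomes $\alpha r_*$ with $r_*:=\cos(\tfrac{2\pi t}{T_*})$ and $\alpha\geq 0$. Thus $R=\alpha r_*+\beta$ with $\beta$ $L^2$-orthogonal to $\ker\mathcal{L}_{T_*}=\spa\{1,\sin(\tfrac{2\pi t}{T_*}),r_*\}$, and the smallness $\tau_u\leq\tau_*$ translates into $\alpha/c$ and $\mathcal{E}_{T_*}[\beta]^{1/2}/c$ being $\lesssim\tau_u$.

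Next, since $u_*=((d-2)/d)^{(d-2)/4}$ is constant, set $w:=R/c$ and Taylor-expand $\|u\|_q^q=c^q\int|1+w|^q\,dv_g$ through order four in $w$; the second-order terms rebuild $\langle R,\mathcal{L}_{T_*}R\rangle$, which equals $\langle\beta,\mathcal{L}_{T_*}\beta\rangle$ because $\alpha r_*\in\ker\mathcal{L}_{T_*}$. In the cubic and quartic terms the pure $\alpha$-contributions and the leading mixed $\alpha^2\beta$ contributions organize themselves neatly thanks to $\int r_*^3\,dv_g=0$, $r_*^2=\tfrac12+\tfrac12\cos(\tfrac{4\pi t}{T_*})$, and $\int r_*^4\,dv_g=\tfrac{3}{8}|\Sigma_{T_*}|$, producing
\begin{equation*}
\mathcal{E}_{T_*}[u]-S_d(T_*)\|u\|_q^2 \;\geq\; \langle\beta,\mathcal{L}_{T_*}\beta\rangle - 2\langle F_\alpha,\beta\rangle + C_4\alpha^4 + \mathcal{R},
\end{equation*}
where $F_\alpha:=\frac{(d-2)^2(q-1)(q-2)\alpha^2}{16c}\cos(\tfrac{4\pi t}{T_*})$ belongs to $(\ker\mathcal{L}_{T_*})^\perp$, $C_4:=\frac{(d-2)^2(q-1)(q-2)(q+1)|\Sigma_{T_*}|}{128c^2}$, and $\mathcal{R}$ collects the subleading cubic, quartic and higher pieces.

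Third, $\cos(\tfrac{4\pi t}{T_*})$ is an eigenfunction of $\mathcal{L}_{T_*}$ with eigenvalue $3(d-2)$, so completing the square against $\mathcal{L}_{T_*}^\perp$ yields $\langle\beta,\mathcal{L}_{T_*}\beta\rangle-2\langle F_\alpha,\beta\rangle\geq -\langle F_\alpha,(\mathcal{L}_{T_*}^\perp)^{-1}F_\alpha\rangle=-\tfrac{(d-2)(q-1)^2\alpha^4|\Sigma_{T_*}|}{96c^2}$, and a short algebraic simplification using $(d-2)(q-2)=4$ and $(d-2)(q-1)=d+2$ gives the \emph{secondary nondegeneracy}
\begin{equation*}
C_4\alpha^4 - \langle F_\alpha,(\mathcal{L}_{T_*}^\perp)^{-1}F_\alpha\rangle = \frac{(d-2)(q-1)(q+2)\alpha^4|\Sigma_{T_*}|}{48c^2} > 0 .
\end{equation*}
Multiplying by the leading part $\mathcal{E}_{T_*}[u]\approx\tfrac{(d-2)^2}{4}|\Sigma_{T_*}|c^2$ and comparing with $\delta[u]^4\approx\alpha^4\mathcal{E}_{T_*}[r_*]^2=\alpha^4\frac{(d-2)^2(d+2)^2|\Sigma_{T_*}|^2}{64}$ produces exactly the ratio $\tfrac{q+2}{3(d+2)}=\tfrac{(q+2)(q-2)}{12(q-1)}$ of the theorem.

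The main obstacle is the control of $\mathcal{R}$, several of whose terms are only one factor of $\tau_u$ smaller than the leading contribution. The crucial trick is to reserve a small fraction $O(\tau_u)$ of $\langle\beta,\mathcal{L}_{T_*}\beta\rangle$, which on $(\ker\mathcal{L}_{T_*})^\perp$ dominates a positive multiple of $\mathcal{E}_{T_*}[\beta]$, and to absorb into it the mixed cubic term $-\tfrac{(d-2)^2(q-1)(q-2)\alpha}{4c}\int r_*\beta^2\,dv_g$ together with the mixed quartic pieces such as $\alpha^3\!\int\! r_*^3\beta$, $\alpha\!\int\! r_*\beta^3$, $\int\!\beta^4$, and the subleading part of $6\alpha^2\!\int\! r_*^2\beta^2$, via weighted Young and Cauchy--Schwarz inequalities; each absorption costs only $O(\tau_u)$ times the secondary-nondegeneracy constant. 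Higher-order Taylor remainders of order $\geq 5$ in $w$ are handled by Sobolev embedding $\|R\|_q^2\lesssim\mathcal{E}_{T_*}[R]=\delta[u]^2$ together with $\|R\|_q/c\lesssim\tau_u$, and automatically carry an extra positive power of $\tau_u$. Combining these estimates yields $\mathcal{R}\geq -O(\tau_u)\,\alpha^4/c^2$, which after multiplication by $\mathcal{E}_{T_*}[u]$ amounts to $-O(\tau_u)\delta[u]^4$, exactly as required by the proposition.
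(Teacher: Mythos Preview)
Your overall architecture --- decompose $R=\alpha r_*+\beta$, expand to fourth order, complete the square against $\mathcal L_{T_*}^\perp$, and verify the secondary nondegeneracy --- matches the paper's Step~3, and your computation of the constant $\tfrac{(q+2)(q-2)}{12(q-1)}$ is correct. However, there is a genuine gap that breaks the argument for $d\geq 5$.

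The problem is the fourth-order Taylor expansion of $\int_{\Sigma_{T_*}}|1+w|^q\,dv_g$. For $d\geq 5$ one has $q=\tfrac{2d}{d-2}<4$, so $t\mapsto |1+t|^q$ is not $C^4$ (for $d\geq 6$ it is not even $C^3$), and a global fourth-order expansion with remainder $o(t^4)$ simply does not exist. Concretely, the quartic pieces you list, such as $\int_{\Sigma_{T_*}}\beta^4\,dv_g$ and $\int_{\Sigma_{T_*}}r_*\beta^3\,dv_g$, need not be finite: your $\beta$ lives in $H^1(\Sigma_{T_*})$, which on a $d$-dimensional manifold embeds only into $L^q$, not into $L^4$, when $q<4$. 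And if you fall back to the legitimate lower-order expansion with $L^q$ remainder, that remainder is of size $\|w\|_q^q\sim\tau_u^{\,q}$, which for $q<4$ swamps the $\tau_u^4$ scale you are trying to isolate.

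The paper avoids this by an essential preliminary step you omit (its Step~1): it first peels off the $\omega$-dependent part $r'$ of $R$ at quadratic order --- this is harmless because $r'$ is automatically orthogonal to $\ker\mathcal L_{T_*}$, so $\langle r',\mathcal L_{T_*}r'\rangle\gtrsim\mathcal E_{T_*}[r']$ --- and thereby reduces to functions of $t$ alone. Once in one dimension, the embedding $H^1(\R/T_*\Z)\hookrightarrow L^\infty$ gives $\|w\|_\infty\lesssim\tau_u$, so for small $\tau_*$ one stays well away from the singularity at $w=-1$ and the fourth-order expansion is rigorously available. The paper explicitly flags this: ``Thanks to this $L^\infty$-bound, we can Taylor expand the norm to fourth order (note that $q<4$ if $d>4$).'' Without this reduction, your expansion is only justified for $d\in\{3,4\}$.

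A smaller point: your final sentence equates $O(\tau_u)\alpha^4/c^2$ (times $\mathcal E_{T_*}[u]$) with $O(\tau_u)\delta[u]^4$, but $\delta[u]^4=(\mathcal E_{T_*}[\alpha r_*]+\mathcal E_{T_*}[\beta])^2$ can be much larger than $\alpha^4$ when $\mathcal E_{T_*}[\beta]$ dominates. The paper handles this by a separate case split (its Step~2 and the dichotomy at the end of Step~3), exploiting that when $\beta$ dominates one actually has the stronger quadratic bound $\mathcal E_{T_*}[u]-S_d(T_*)\|u\|_q^2\gtrsim\mathcal E_{T_*}[\beta]\geq\tfrac12\delta[u]^2$. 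Your ``reserve a fraction'' trick can likely be made to cover this, but as written the conclusion is not justified.
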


\begin{proof}
	We denote by $u_*$ the constant function $u_0=((d-2)/d)^{(d-2)/4}$ and set $c := u_0^{-1} |\Sigma_{T_*}|^{-1} \int_{\Sigma_{T_*}} u\,dv_g$ and $r:= u - c u_*$, so that
	$$
	u = c u_* + r \,,
	\qquad
	\int_{\Sigma_{T_*}} r\,dv_g = 0
	\qquad\text{and}\qquad
	\delta[u]^2 = \mathcal E_{T_*}[r] \,.
	$$
	
	\medskip
	
	\emph{Step 1.} We show that, by choosing $\tau_*>0$ sufficiently small, depending only on $d$, we may assume that $r$ depends only on $t$ and not on $\omega$. To do so, we decompose
	$$
	r = r_0 + r'
	\qquad\text{with}\ r_0(t) := |\Sph^{d-1}|^{-1} \int_{\Sph^{d-1}} r(t,\omega)\,d\omega \,.
	$$
	Similarly as in the proof of Lemma \ref{dist} we have
	\begin{align*}
		S_d(T_*) \left( \|r_0\|_q^2 + \|r'\|_q^2\right) & \leq \mathcal E_{T_*}[r_0] + \mathcal E_{T_*}[r'] = \mathcal E_{T_*}[r] = \tau_u^2 \left( \mathcal E_{T_*}[u]- \mathcal E_{T_*}[r] \right) \\
		& = \tau_u^2 \mathcal E_{T_*}[cu_*] = \tau_u^2 S_d(T_*) c^2 \|u_*\|_q^2
	\end{align*}
	Thus, by choosing $\tau_*>0$ small, we can ensure that both $\|r_0\|_q$ and $\|r'\|_q$ are as small as we wish with respect to $|c|$.
		
	We have
	$$
	\mathcal E_{T_*}[u] = \mathcal E_{T_*}[cu_*+r_0] + \mathcal E_{T_*}[r']
	$$
	and, by a second-order Taylor expansion,
	\begin{align*}
		& \left| \| u \|_q^2 - \|cu_*+r_0\|_q^2 -(q-1) |\Sigma_{T_*}|^{-1+2/q} \int_{\Sigma_{T_*}} r'^2\,dv_g \right| \\
		& \lesssim |c|^{\max\{2-q,-1\}} \left( \|r'\|_q^{\min\{q,3\}} + \|r_0\|_q^{\min\{q-2,1\}} \|r'\|_q^2 \right). 
	\end{align*}
	Here we used the smallness of $\tau_*$ in order to bound the difference between $\|cu_* + r_0\|_q^{2-q}$ and $\|c u_*\|_q^{2-q}$. We use this smallness again to bound
	$$
	|c|^{\max\{2-q,-1\}} \left( \|r'\|_q^{\min\{q,3\}} + \|r_0\|_q^{\min\{q-2,1\}} \|r'\|_q^2 \right)
	\lesssim \tau_*^{\min\{q-2,1\}} \|r'\|_q^2 \,.
	$$
	Thus, we obtain
	\begin{align*}
		\mathcal E_{T_*}[u] - S_d(T_*) \|u\|_q^2 & \geq \mathcal E_{T_*}[c u_*+r_0] - S_d(T_*) \|cu_*+r_0\|_q^2 \\
		& \quad + \langle r',\mathcal L_{T_*} r'\rangle - \const \tau_*^{\min\{q-2,1\}} \|r'\|_q^2 \,. 
	\end{align*}
	Since $r'$ is $L^2$-orthogonal to the kernel of $\mathcal L_{T_*}$, we have $\langle r',\mathcal L_{T_*} r'\rangle \gtrsim \mathcal E_{T_*}[r']$, and therefore, if $\tau_*>0$ is sufficiently small,
	$$
	\mathcal E_{T_*}[u] - S_d(T_*) \|u\|_q^2 \geq \mathcal E_{T_*}[c u_*+r_0] - S_d(T_*) \|cu_*+r_0\|_q^2  + \gamma\, \mathcal E_{T_*}[r']
	$$
	with some constant $\gamma>0$.
	
	Assume now that we can prove the desired bound for the function $cu_*+r_0$, which only depends on the $t$-variable. Then the right side above is
	$$
	\geq \frac{(q+2)(q-2)}{12(q-1)} \left( 1 - \const \tau_u \right) \frac{\mathcal E_{T_*}[r_0]^2}{\mathcal E_{T_*}[cu_*+r_0]} + \gamma\, \mathcal E_{T_*}[r'] \,.
	$$
	This gives the desired bound for the function $u$, since for all sufficiently small $\tau_*>0$,
	$$
	\frac{\mathcal E_{T_*}[r_0]^2}{\mathcal E_{T_*}[cu_*+r_0]} + \gamma'\, \mathcal E_{T_*}[r'] \geq \frac{\mathcal E_{T_*}[r]^2}{\mathcal E_{T_*}[cu_*+r_0]} \geq \frac{\mathcal E_{T_*}[r]^2}{\mathcal E_{T_*}[u]} \,.
	$$
	Indeed, the first inequality here is equivalent to $\gamma' \mathcal E_{T_*}[cu_*+r_0] \geq 2 \mathcal E_{T_*}[r_0] + \mathcal E_{T_*}[r']$, and this holds since $\mathcal E_{T_*}[r_0]$ and $\mathcal E_{T_*}[r']$ can be chosen as small as we wish with respect to $|c|$.
	
	\medskip
	
	\emph{Step 2.} According to Step 1, we may assume that $r$ is independent of $t$. We decompose further
	$$
	r= r_\parallel + r_\perp
	\qquad\text{with}\ r_\parallel\in\ker\mathcal L_{T_*} \,,\ r_\perp\in(\ker\mathcal L_{T_*})^\bot \,.
	$$
	In this step we will show that, by choosing $\tau_*>0$ sufficiently small, depending only on $d$, we may assume that $\mathcal E_{T_*}[r_\parallel]\geq \mathcal E_{T_*}[r_\perp]$.
	
	Indeed, by a second-order Taylor expansion as in Step 1, one finds
	\begin{align*}
		\mathcal E_{T_*}[u] - S_d(T_*) \|u\|_q^2 & \geq \langle r,\mathcal L_{T_*} r\rangle - \const \tau_*^{\min\{q-2,1\}} \|r\|_q^2 \,. 
	\end{align*}
	Note that
	$$
	\langle r,\mathcal L_{T_*} r\rangle = \langle r_\perp,\mathcal L_{T_*} r_\perp\rangle \gtrsim \mathcal E_{T_*}[r_\perp] \,.
	$$
	Moreover, assuming $\mathcal E_{T_*}[r_\parallel]\leq \mathcal E_{T_*}[r_\perp]$, we have
	$$
	\|r\|_q^2 \lesssim \mathcal E_{T_*}[r] = \mathcal E_{T_*}[r_\parallel] + \mathcal E_{T_*}[r_\perp] \leq 2 \mathcal E_{T_*}[r_\perp] \,.
	$$
	Thus, if $\tau_*$ is sufficiently small, we have
	$$
	\mathcal E_{T_*}[u] - S_d(T_*) \|u\|_q^2 \gtrsim \mathcal E_{T_*}[r_\perp] \geq \tfrac12\, \mathcal E_{T_*}[r]
	$$
	Since $\mathcal E_{T_*}[u]/\delta[u]^2 = \tau_u^{-2} + 1$, this is a stronger bound than the claimed one.

	\medskip
	
	\emph{Step 3.} We now come to the main part of the proof. We recall that $r$ depends only on $t$ and decompose it as in Step 2 with $\mathcal E_{T_*}[r_\perp]\leq\mathcal E_{T_*}[r_\parallel]$. Note that
	$$
	r_\parallel = a \cos(\tfrac{2\pi}{T_*}\,\cdot) + b \sin\cos(\tfrac{2\pi}{T_*}\,\cdot) = \sqrt{a^2+b^2}\,\cos(\tfrac{2\pi}{T_*}(\cdot-\tilde t))
	$$
	and after a translation of $u$ we may assume that $\tilde t=0$. Thus, $r_\parallel= \epsilon r_*$ with $r_*=\cos(\tfrac{2\pi}{T_*}\,\cdot)$ and $\epsilon:=\sqrt{a^2+b^2}$.
	
	As we are dealing with functions of a single variable, we have a Sobolev embedding into $L^\infty$, which gives
	$$
	\epsilon^2 + \| r_\perp \|_\infty^2 \lesssim \mathcal E_{T_*}[r_\parallel] + \mathcal E_{T_*}[r_\perp] = \mathcal E_{T_*}[r] = \tau_u^2 \mathcal E_{T_*}[cu_*] = \tau_u^2 S_d(T_*) c^2 \| u_*\|_q^2 \,.
	$$
	Thus, by choosing $\tau_*>0$ small, we can ensure that both $\epsilon$ and $\|r_\perp\|_\infty$ are as small as we wish with respect to $|c|$.
	
	Thanks to this $L^\infty$-bound, we can Taylor expand the norm to fourth order (note that $q<4$ if $d>4$) and obtain eventually
	\begin{align*}
		\| u \|_q^2 & = c^2 \|u_*\|_q^2 + (q-1) \|u_*\|_q^{2-q} u_0^{q-2} \int_{\Sigma_{T_*}} (\epsilon^2 r_*^2 + r_\perp^2)\,dv_g \\
		& \quad + (q-1)(q-2) \|u_*\|_q^{2-q} u_0^{q-3} |c|^{-1} \epsilon^2 \int_{\Sigma_{T_*}} r_*^2 r_\perp \,dv_g \\
		& \quad + \tfrac1{12} (q-1)(q-2)(q-3) \|u_*\|_q^{2-q} u_0^{q-4} c^{-2} \epsilon^4 \int_{\Sigma_{T_*}} r_*^4\,dv_g \\
		& \quad - \tfrac14(q-1)^2(q-2) \|u_*\|_q^{2-2q} u_0^{2q-4} c^{-2} \epsilon^4 \left( \int_{\Sigma_{T_*}} r_*^2\,dv_g \right)^2 \\
		& \quad + \mathcal O(|c|^{-3}\epsilon^5 + |c|^{-1}\epsilon \|r_\bot\|_\infty^2) \,.
	\end{align*}
	(In this bound we controlled a term $|c|^{-1}\|r_\bot\|_\infty^3$ by $|c|^{-1}\epsilon\|r_\bot\|_\infty^2$ using our assumption $\|r_\bot\|^2_\infty \lesssim \mathcal E_{T_*}[r_\bot]\leq \mathcal E_{T_*}[r_\parallel] = \const \epsilon^2$. Moreover, we controlled a term $c^{-2}\epsilon^3\|r_\bot\|_\infty$ by $|c|^{-3}\epsilon^5 + |c|^{-1}\epsilon \|r_\bot\|_\infty^2$ using Schwarz.) Thus,
	\begin{align*}
		\mathcal E_{T_*}[u] - S_d(T_*)\|u\|_q^2 
		& = \langle r_\bot, \mathcal L_{T_*} r_\bot \rangle - 2 |c|^{-1} \epsilon^2 \left\langle f_*, r_\perp \right\rangle + c^{-2} \epsilon^4 C_* \\
		& \quad + \mathcal O(|c|^{-3}\epsilon^5 + |c|^{-1}\epsilon \|r_\bot\|_\infty^2) \,,
	\end{align*}
	where $C_*$ and $f_*$ are as in the proof of the upper bound. By completing a square, much like in the proof of the upper bound,
	\begin{align*}
		& \langle r_\bot, \mathcal L_{T_*} r_\bot \rangle - 2 |c|^{-1} \epsilon^2 \left\langle f_*, r_\perp \right\rangle 
		 - \const |c|^{-1}\epsilon \|r_\bot\|_\infty^2 \\
		& \quad \geq - c^{-2} \epsilon^4 \langle P^\bot f_*,(\mathcal L_{T_*}^\bot)^{-1} P^\bot f_* \rangle - \const |c|^{-3} \epsilon^5 \,. 
	\end{align*}
	This shows that we are almost in the situation of the upper bound, and we find
	\begin{align*}
		\frac{\mathcal E_{T_*}[u] \left( \mathcal E_{T_*}[u] - S_d(T_*)\|u\|_q^2 \right)}{\mathcal E_{T_*}[\epsilon r_*]^2} \geq \frac{(q+2)(q-2)}{12(q-1)} - \const \tau_u \,.
	\end{align*}
	This is almost the claimed bound, except that we have $\mathcal E_{T_*}[\epsilon r_*]^2$ in the denominator instead of $\mathcal E_{T_*}[\epsilon r_*+r_\bot]^2$. If $\mathcal E_{T_*}[r_\bot] \lesssim c^{-2}\epsilon^4$ (with any fixed implied constant), we have $\mathcal E_{T_*}[\epsilon r_*+r_\bot]^2 \leq (1+ \const \tau_u^2)\, \mathcal E_{T_*}[\epsilon r_*]^2$, so this difference is harmless.
	
	Meanwhile, if $\mathcal E_{T_*}[r_\bot] \geq M c^{-2}\epsilon^4$ with a sufficiently large constant $M>0$, we argue slightly differently and use
	$$
	\langle r_\bot, \mathcal L_{T_*} r_\bot \rangle - 2 |c|^{-1} \epsilon^2 \left\langle f_*, r_\perp \right\rangle \geq \gamma\, \mathcal E_{T_*}[r_\bot]
	$$
	with some constant $\gamma>0$. Thus,
	$$
	\mathcal E_{T_*}[u] - S_d(T_*)\|u\|_q^2 \geq \left( \gamma\,  \mathcal E_{T_*}[r_\bot] + c^{-2}\epsilon^4 C_* \right) \left( 1- \const \tau_u \right).
	$$
	Here, as before, we want to replace $\epsilon^4 = \mathcal E_{T_*}[\epsilon r_*]^2/\mathcal E_{T_*}[r_*]^2$ by $\mathcal E_{T_*}[\epsilon r_* + r_\bot]^2/\mathcal E_{T_*}[r_*]^2$. (We could also use the fact that $C_*$ is strictly larger than the constant we want to obtain, but we do not need this.) Thus the claimed bound will follow if we can show that
	$$
	\gamma\,  \mathcal E_{T_*}[r_\bot] + c^{-2}\epsilon^4 C_* \geq c^{-2}\, \frac{\mathcal E_{T_*}[\epsilon r_* + r_\bot]^2}{\mathcal E_{T_*}[r_*]^2}\, C_* \,.
	$$
	This is equivalent to $c^2 (\gamma/C_*) \mathcal E_{T_*}[r_*]^2 \geq 2 \mathcal E_{T_*} [\epsilon r_*] + \mathcal E_{T_*}[r_\bot]$ and, since $\mathcal E_{T_*}[r_\bot]\leq \mathcal E_{T_*}[\epsilon r_*]$, this follows from $\epsilon \lesssim \tau_u |c|$ by choosing $\tau_*>0$ small enough. This concludes the proof of Theorem \ref{stabt} in case $T=T_*$.	
\end{proof}

\begin{remark}
	The above proof actually yields the stronger stability result
	$$
	\mathcal E_{T_*}[u] - S_d(T_*)\|u\|_q^2 \gtrsim \frac{\mathcal E_{T_*}[ \pi u]^2}{\mathcal E_{T_*}[u]} + \mathcal E_{T_*}[ \pi^\bot u] \,,
	$$
	where $\pi$ denotes the orthogonal projection in $L^2(\Sigma_{T_*})$ onto the span of $1$, $\sin(\frac{2\pi}{T_*}\,\cdot)$ and $\cos(\frac{2\pi}{T_*}\,\cdot)$ and where $\pi^\bot = 1-\pi$. Thus the quartic behavior appears only on a low-dimensional subspace, whereas on its orthogonal complement we have quadratic stability.
\end{remark}


\subsection*{Bibliographic remarks}

Quantitative stability for minimizing Yamabe metrics on closed manifolds was studied by Engelstein, Neumayer and Spolaor \cite{EnNeSp}. Their result specialized to $\Sigma_T$, plus the known fact that the optimizers are, in their terminology, nondegenerate (if $T<T_*$) or integrable (if $T>T_*$), implies the main theorem of this lecture in the case $T\neq T_*$. In the critical case $T=T_*$ their paper yields an inequality with an unspecified power $\geq 4$. The fact that this power can be chosen to be equal to $4$ is from \cite{Fr}.

The explicit study of the Yamabe problem on $\Sigma_T$ is due to Schoen \cite{Sc}, where also the phase-plane analysis appears; for more details see also \cite{CaChRu}, including a proof of monotonicity of the period map following \cite{BVBo}. The classification of zero modes of the Hessian uses ideas from \cite{MaPa,MaPoUh}. The analysis of the solutions on $\Sigma_T$ and of their linearization plays a role in the description of the asymptotic behavior of positive solutions of $-\Delta u = u^{(d+2)/(d-2)}$ near isolated singularities; see,e.g., \cite{CaGiSp,KoMaPaSc}.

The fact that there is an optimizer for $S_d(T)$ under the assumption $S_d(T)<S_d$ is a special case of a result of Aubin \cite{Au0}. Our proof is different and yields relative compactness of optimizing sequences. It is related to Lieb's proof in \cite[Lemma 1.2]{BrNi}.

An optimal, quartic stability result for the subcritical Sobolev inequality \eqref{eq:bvvb} on $\Sph^d$, which we discussed in the appendix of Lecture 2, can be obtained by the same method as in this lecture \cite{Fr}. The fact that for this inequality one has quadratic stability away from a low-dimensional subspace was observed in \cite{BrDoSi}, where in addition explicit constants were obtained by avoiding the use of compactness.


\bibliographystyle{amsalpha}

\end{document}